\providecommand{\tabularnewline}{\\}
\numberwithin{equation}{section}
\numberwithin{figure}{section}
\theoremstyle{plain}
\newtheorem{thm}{\protect\theoremname}
  \theoremstyle{definition}
  \newtheorem{defn}[thm]{\protect\definitionname}
  \theoremstyle{remark}
  \newtheorem{rem}[thm]{\protect\remarkname}
  \theoremstyle{plain}
  \newtheorem{prop}[thm]{\protect\propositionname}
  \theoremstyle{plain}
  \newtheorem{cor}[thm]{\protect\corollaryname}
  \theoremstyle{plain}
  \newtheorem{conjecture}[thm]{\protect\conjecturename}
  \theoremstyle{plain}
  \newtheorem{lem}[thm]{\protect\lemmaname}
  \theoremstyle{definition}
  \newtheorem{example}[thm]{\protect\examplename}
  \providecommand{\conjecturename}{Conjecture}
  \providecommand{\corollaryname}{Corollary}
  \providecommand{\definitionname}{Definition}
  \providecommand{\examplename}{Example}
  \providecommand{\lemmaname}{Lemma}
  \providecommand{\propositionname}{Proposition}
  \providecommand{\remarkname}{Remark}
\providecommand{\theoremname}{Theorem}
\begin{document}
\global\long\global\long\global\long\def\Alb{{\rm Alb}}
 \global\long\global\long\global\long\def\Jac{{\rm Jac}}
 \global\long\global\long\global\long\def\Hom{{\rm Hom}}
 \global\long\global\long\global\long\def\End{{\rm End}}
 \global\long\global\long\global\long\def\aut{{\rm Aut}}
 \global\long\global\long\global\long\def\NS{{\rm NS}}
 \global\long\global\long\global\long\def\SSm{{\rm S}}
 \global\long\global\long\global\long\def\psl{{\rm PSL}}
 \global\long\global\long\global\long\def\CC{\mathbb{C}}
 \global\long\global\long\global\long\def\BB{\mathbb{B}}
 \global\long\global\long\global\long\def\PP{\mathbb{P}}
 \global\long\global\long\global\long\def\QQ{\mathbb{Q}}
 \global\long\global\long\global\long\def\RR{\mathbb{R}}
 \global\long\global\long\global\long\def\FF{\mathbb{F}}
 \global\long\global\long\global\long\def\DD{\mathbb{D}}
 \global\long\global\long\global\long\def\NN{\mathbb{N}}
 \global\long\global\long\global\long\def\ZZ{\mathbb{Z}}
 \global\long\global\long\global\long\def\HH{\mathbb{H}}
 \global\long\global\long\global\long\def\gal{{\rm Gal}}
 \global\long\global\long\global\long\def\OO{\mathcal{O}}
 \global\long\global\long\global\long\def\pP{\mathfrak{p}}
 \global\long\global\long\global\long\def\pPP{\mathfrak{P}}
 \global\long\global\long\global\long\def\qQ{\mathfrak{q}}
 \global\long\global\long\global\long\def\qQQ{\mathfrak{Q}}
 \global\long\global\long\global\long\def\mm{\mathcal{M}}
 \global\long\global\long\global\long\def\aaa{\mathfrak{a}}

\title[Involutions of second kind on Shimura surfaces]{Involutions of second kind on Shimura surfaces and surfaces of general type with $q=p_g=0$}

\author{Amir D\v{z}ambi\'{c}, Xavier Roulleau}

\maketitle

\begin{abstract}
Quaternionic Shimura surfaces are quotient of the bidisc by an irreducible
cocompact arithmetic group. In the present paper we are interested
in (smooth) quaternionic Shimura surfaces admitting an automorphism
with one dimensional fixed locus; such automorphisms are involutions.
We propose a new construction of surfaces of general type with $q=p_{g}=0$
as quotients of quaternionic Shimura surfaces by such involutions.
These quotients have finite fundamental group. 
\end{abstract}

\section{Introduction}

Among smooth minimal surfaces of general type, the ones with vanishing
geometric genus $p_{g}$ are of main interest (see e.g. \cite{bauercatanesepignatelli2}).
For such surfaces, the Chern number $c_{1}^{2}=K^{2}$ belongs to
the set $\{1,\dots,9\}$ and $c_{2}=12-c_{1}^{2}$. We are far away
from a complete classification, although great advances have been
done recently, e.g. for surfaces with $c_{1}^{2}=9$, the fake projective
planes, which have been completely classified, see \cite{Prasad}, \cite{Steger}.
In the other cases, a major task is to construct new examples of such
surfaces.

In this paper we give an uniform construction of surfaces with $q=p_{g}=0$
and $c_{1}^{2}=1,\dots,7$. These surfaces are obtained as quotients
of smooth quaternionic Shimura surfaces $X$ by a special kind of
involution. Recall that a smooth Shimura surface $X=X_{\Gamma}$ is
the quotient of $\HH\times\HH$, the product of two copies of the
complex upper half plane, by a discrete cocompact torsion free group
$\Gamma\subset\aut(\HH)\times\aut(\HH)$ of holomorphic automorphisms
defined by certain quaternion algebra. The invariants of $X$ are
$c_{1}^{2}(X)=2c_{2}(X)=8(1+p_{g}(X))$ and $q(X)=0$.

In the first step we prove that an automorphism of a Shimura surface
is quite special:
\begin{thm}
Let $\sigma$ be an automorphism of a smooth Shimura surface. Then
either $\sigma$ has a finite number of fixed points or the fixed
point set of $\sigma$ is a divisor and $\sigma^{2}=1$.
\end{thm}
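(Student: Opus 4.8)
The plan is to lift $\sigma$ to the universal cover and exploit the very restricted shape of the holomorphic automorphism group of the bidisc together with the irreducibility of $\Gamma$. Since $X$ is of general type, $\aut(X)$ is finite, so $\sigma$ has finite order; and since $\HH\times\HH$ is the universal cover and $X=K(\Gamma,1)$, the automorphism $\sigma$ lifts to some $\tilde\sigma$ in $\aut(\HH\times\HH)\cong(\psl_2(\RR)\times\psl_2(\RR))\rtimes\ZZ/2$ which normalizes $\Gamma$. The fixed locus of $\sigma$ is then the image in $X$ of $\bigcup_{g\in\Gamma\tilde\sigma}\mathrm{Fix}(g)$, where $\Gamma\tilde\sigma$ is the set of all lifts of $\sigma$. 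Because $\Gamma$ lies in the index-two non-switching subgroup $\psl_2(\RR)\times\psl_2(\RR)$, every lift of $\sigma$ has the same type, all non-switching or all switching, according to whether $\sigma$ preserves or exchanges the two tautological foliations of $X$. A key preliminary remark is that any lift $g$ with a fixed point has finite order: $g^n\in\Gamma$ for $n=\mathrm{ord}(\sigma)$, and since $\Gamma$ is torsion free and acts freely, $g^n$ fixing a point forces $g^n=\mathrm{id}$.

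Next I would classify the fixed loci of finite-order $g$. If $g=(g_1,g_2)$ is non-switching, then $\mathrm{Fix}(g)=\mathrm{Fix}(g_1)\times\mathrm{Fix}(g_2)$; as each finite-order $g_i\in\psl_2(\RR)$ is either elliptic (one fixed point in $\HH$) or the identity (all of $\HH$), the locus $\mathrm{Fix}(g)$ is one-dimensional precisely when exactly one of $g_1,g_2$ is the identity. If instead $g\colon(z_1,z_2)\mapsto(h_1z_2,h_2z_1)$ is switching, then $\mathrm{Fix}(g)\cong\mathrm{Fix}(h_1h_2)$ via $z_1\mapsto(z_1,h_2z_1)$, and since $g^2=(h_1h_2,h_2h_1)$ this is one-dimensional exactly when $h_1h_2=\mathrm{id}$, in which case $g^2=\mathrm{id}$.

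The heart of the argument is to rule out the non-switching one-dimensional case by irreducibility. Suppose $g=(\mathrm{id},g_2)$ with $g_2\neq\mathrm{id}$ normalizes $\Gamma$. Writing $\gamma=(\gamma^{(1)},\gamma^{(2)})$, conjugation gives $g\gamma g^{-1}=(\gamma^{(1)},g_2\gamma^{(2)}g_2^{-1})\in\Gamma$. Since the projection $\pi_1\colon\Gamma\to\psl_2(\RR)$ is injective (the defining quaternion algebra is a division algebra), two elements of $\Gamma$ with equal first coordinate coincide, whence $g_2\gamma^{(2)}g_2^{-1}=\gamma^{(2)}$ for all $\gamma$. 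Thus $g_2$ centralises $\pi_2(\Gamma)$, which is dense in $\psl_2(\RR)$ by irreducibility, forcing $g_2=\mathrm{id}$, a contradiction. Consequently a one-dimensional component of $\mathrm{Fix}(\sigma)$ can only come from a switching lift $g$ with $h_1h_2=\mathrm{id}$; such a $g$ satisfies $g^2=\mathrm{id}$, and being a lift of $\sigma$ it yields $\sigma^2=1$.

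It remains to see that in this case $\mathrm{Fix}(\sigma)$ is a pure divisor and, in the complementary case, is finite. When a fixed curve occurs all lifts are switching; if some lift $g'$ had an isolated fixed point $p'$, then $g'^2\in\Gamma$ (as $\sigma^2=1$) would fix $p'$, hence equal $\mathrm{id}$ by freeness, forcing $h_1'h_2'=\mathrm{id}$ and contradicting the isolation of $p'$. So the fixed locus is smooth of pure dimension one, i.e. a divisor. If on the other hand $\sigma$ admits no switching lift with $h_1h_2=\mathrm{id}$ (in particular whenever $\sigma$ is of non-switching type), every contributing lift has a zero-dimensional fixed locus, so $\mathrm{Fix}(\sigma)$, a compact analytic subset of $X$ containing no curve, is finite. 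The main obstacle is the irreducibility input of the third paragraph: establishing injectivity of one projection and density of the other is exactly where the arithmetic of the quaternion division algebra defining $\Gamma$ is indispensable, and it is what prevents the reflection-in-one-factor phenomenon that would otherwise produce fixed curves for higher-order automorphisms.
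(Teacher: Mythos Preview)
Your argument is correct and complete. The paper itself does not supply a proof of this statement but simply invokes \cite[Theorem 3.12]{DzambicRoulleau}, an earlier paper of the same authors, so there is nothing to compare line by line; however, your lift-and-classify approach via $\aut(\HH\times\HH)\cong(\psl_2(\RR)\times\psl_2(\RR))\rtimes\ZZ/2\ZZ$ is the natural one and almost certainly coincides with what is done in the cited reference. In particular, the key step---ruling out a non-switching lift of the shape $(\mathrm{id},g_2)$ by using that one projection of $\Gamma$ is injective (because $A$ is a division algebra) while the other has dense image (irreducibility of the lattice)---is exactly the expected mechanism, and your final paragraph correctly upgrades ``$\sigma^2=1$'' to ``the fixed locus is a pure divisor'' by showing that every switching lift with a fixed point is forced to satisfy $g'^2=\mathrm{id}$. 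The only cosmetic remark is that the injectivity/density facts you flag as ``the main obstacle'' are standard for quaternionic Shimura lattices and require no further work beyond what you indicate.
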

Under certain conditions on the group $\Gamma$, the involution $\mu\in\aut(\HH\times\HH)$
exchanging the two factors induces an involution $\sigma$ on the
surface $X$. We call such an automorphism of $X$ an involution of
second kind. We obtain
\begin{thm}
Let $X$ be a smooth Shimura surface admitting an involution of second
kind $\sigma.$ The fixed point set $C$ of $\sigma$ is a a union
of disjoint smooth Shimura curves. The arithmetic genus $g$ of $C$
satisfies $2\leq g\leq p_{g}(X)$ and the quotient surface $Z=X/\sigma$
is smooth with finite fundamental group. If moreover $g=p_{g}(X)\leq8$,
$Z$ is a surface of general type with invariants:
\[
c_{1}(Z)^{2}=9-p_{g}(X),\, c_{2}(Z)=3+p_{g}(X),\, p_{g}(Z)=q(Z)=0.
\]

\end{thm}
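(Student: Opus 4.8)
The plan is to study $\sigma$ through its lift to the bidisc and to treat the quotient both complex-analytically, for the numerical invariants, and group-theoretically, for the fundamental group. Write $\sigma$ as the automorphism of $X=\Gamma\backslash(\mathbb{H}\times\mathbb{H})$ induced by a lift $\tilde{\sigma}=\gamma_{0}\mu$, where $\mu$ is the swap of the two factors, and set $\Gamma^{+}=\langle\Gamma,\tilde{\sigma}\rangle$, so that $Z=X/\sigma=\Gamma^{+}\backslash(\mathbb{H}\times\mathbb{H})$. At any fixed point the differential of such a swap-type holomorphic involution has eigenvalues $+1$ and $-1$; by the previous theorem the fixed locus $C$ is therefore a smooth divisor, its components are pairwise disjoint (two branches through one point would force $\sigma$ to act trivially on the tangent space, hence $\sigma=\mathrm{id}$), and $Z$ is smooth because $\Gamma^{+}$ acts with isotropy only in codimension one. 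Each component is the image of a totally geodesic twisted diagonal $\{(z,\alpha z)\}$, i.e. a compact quotient of $\mathbb{H}$ by a torsion-free arithmetic Fuchsian group, so a smooth Shimura curve of genus $\geq2$; since $g=\sum_{i}g(C_{i})-(n-1)$ this already gives $g\geq2$.

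For the upper bound the key geometric input is that, in the product metric, the normal (anti-diagonal) direction along $C$ is an isometric copy of the tangent direction, so $N_{C/X}\cong T_{C}$. This yields $C^{2}=\deg N_{C/X}=-2(g-1)$ and, by adjunction, $K_{X}\cdot C=4(g-1)$. Regard $\pi\colon X\to Z$ as the double cover branched along $B=\pi(C)\cong C$, with $\mathcal{O}_{Z}(B)=L^{\otimes2}$, $\pi^{*}L=\mathcal{O}_{X}(C)$ and $K_{X}=\pi^{*}(K_{Z}+L)$. Then $C^{2}=2L^{2}$ and $K_{X}\cdot C=2(K_{Z}+L)\cdot L$ give $L^{2}=1-g$ and $K_{Z}\cdot L=3(g-1)$, whence $\tfrac{1}{2}L(K_{Z}+L)=g-1$. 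Feeding this into the double-cover formula $\chi(\mathcal{O}_{X})=2\chi(\mathcal{O}_{Z})+\tfrac{1}{2}L(K_{Z}+L)$, together with $\chi(\mathcal{O}_{X})=1+p_{g}(X)$ and $\chi(\mathcal{O}_{Z})=1+p_{g}(Z)$ (here $q(Z)=0$ because $q(X)=0$), I obtain
\[
g=p_{g}(X)-2p_{g}(Z),
\]
so $g\leq p_{g}(X)$, with equality precisely when $p_{g}(Z)=0$.

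Assume now $g=p_{g}(X)\leq8$, so that $p_{g}(Z)=q(Z)=0$ and $\chi(\mathcal{O}_{Z})=1$. From $e(X)=2e(Z)-e(C)$ with $e(X)=4(1+p_{g}(X))$ and $e(C)=2-2g$ I get $c_{2}(Z)=e(Z)=3+p_{g}(X)$, and Noether's formula then gives $c_{1}(Z)^{2}=K_{Z}^{2}=12-c_{2}(Z)=9-p_{g}(X)$, which is $\geq1$ by the hypothesis $p_{g}(X)\leq8$. That $Z$ is of general type follows because $X$ is (its product hyperbolic metric has negative Ricci curvature and descends smoothly to $X$, so $K_{X}$ is ample) and Kodaira dimension is preserved under finite quotients, $\kappa(Z)=\kappa(X)=2$.

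The heart of the argument is the finiteness of $\pi_{1}(Z)$. Since $\mathbb{H}\times\mathbb{H}$ is simply connected and $\Gamma^{+}$ acts properly discontinuously, Armstrong's theorem gives $\pi_{1}(Z)\cong\Gamma^{+}/H$, where $H\trianglelefteq\Gamma^{+}$ is generated by the elements having a fixed point. As $\Gamma$ is torsion-free it acts freely, so these elements are exactly the reflections $\gamma\tilde{\sigma}$ fixing a twisted diagonal; products of two of them lie in $\Gamma$, so $H\cap\Gamma$ is a nontrivial normal subgroup of $\Gamma$. Here I invoke that $\Gamma$ is an irreducible cocompact lattice in the rank-two group $\mathrm{PSL}_{2}(\mathbb{R})\times\mathrm{PSL}_{2}(\mathbb{R})$: by Margulis' Normal Subgroup Theorem $H\cap\Gamma$ is finite or of finite index, and torsion-freeness rules out the finite case, so $H$ has finite index in $\Gamma^{+}$ and $\pi_{1}(Z)=\Gamma^{+}/H$ is finite. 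I expect this last step to be the main obstacle, since it is exactly where irreducibility of $\Gamma$ is indispensable; by contrast the numerical part is routine double-cover bookkeeping once $N_{C/X}\cong T_{C}$ is in hand.
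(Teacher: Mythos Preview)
Your argument is correct, and the numerical part (via $N_{C/X}\cong T_C$ and the double-cover formulas) matches the paper's direct computation using Hirzebruch proportionality together with $K_Z^2=\tfrac{1}{2}(K_X-C)^2$ and $e(Z)=\tfrac{1}{2}(e(X)+e(C))$; your identity $g=p_g(X)-2p_g(Z)$ is exactly the paper's $p_g(Z)=(e(X)-4-4g)/8$ rewritten.

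The fundamental-group step, however, takes a genuinely different route. The paper argues arithmetically: under the additional hypothesis $\Gamma\subset\Gamma_{\mathcal{O}}(1)$ it embeds $\Gamma$ equivariantly into $PSL_2(\mathcal{O}_K)$ for a suitable extension $K/k$ on which the involution acts by Galois conjugation, and then observes that killing all $\bar\gamma\gamma^{-1}$ amounts to reducing modulo the ideal $I=(\bar a-a:a\in\mathcal{O}_K)$, which has finite residue ring. Your appeal to Margulis' Normal Subgroup Theorem is more conceptual and strictly more general: it uses only that $\Gamma$ is an irreducible lattice in the rank-two group $PSL_2(\mathbb{R})\times PSL_2(\mathbb{R})$, and dispenses with the arithmetic hypothesis $\Gamma\subset\Gamma_{\mathcal{O}}(1)$. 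The trade-off is that the paper's method is self-contained and in principle yields an explicit upper bound on $|\pi_1(Z)|$, while yours isolates the structural reason for finiteness. Similarly, for the general-type claim the paper gives a hands-on Riemann--Roch argument showing $h^0(X,m(K_X-C))$ grows quadratically when $(K_X-C)^2>0$; your invocation of invariance of Kodaira dimension under finite group quotients is a legitimate shortcut but imports a nontrivial external fact.
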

If $p_{g}=2$ or $3$, one can prove that $g=p_{g}$ and therefore
$c_{1}^{2}=7$ and $6$ respectively. 

We then concentrate on the construction of such Shimura surfaces with
low $p_{g}$ and admitting an involution of second kind. It turns
out that such surfaces are rather exceptional. For instance, if we
restrict our consideration to totally real fields of degree 2, there
are at most 14 isomorphism classes of quaternion algebras leading
to smooth Shimura surfaces of geometric genus $2\leq p_{g}\leq8$
admitting an involution of second kind (see Theorem \ref{list_real_quadratic_candidates}).
On the other hand we consider Shimura surfaces corresponding to congruence
subgroups and we are able to show:
\begin{thm}
For $k=5,6$ there exists a smooth Shimura surface $X$ with an involution
of second kind $\sigma$ and $p_{g}(X)=k$.  In the case $k=5$,
the curve $C$ fixed by $\sigma$ is irreducible of genus $g(C)=5=p_{g}(X)$.
\end{thm}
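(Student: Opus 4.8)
The plan is to establish existence by exhibiting explicit arithmetic data and then reading off the invariants. For a real quadratic field $F$ with nontrivial automorphism $\tau\in\gal(F/\QQ)$, I would take a quaternion algebra $B$ over $F$ that is split at both archimedean places, so that its norm-one units act on $\HH\times\HH$, and whose $\tau$-conjugate $B^{\tau}$ is isomorphic to $B$. This isomorphism is exactly what lets the factor-exchanging element $\mu\in\aut(\HH\times\HH)$ be realized on the group, and hence descend to $X$ as an involution of second kind. Concretely I would fix an Eichler order $\OO\subset B$ stable under the semilinear involution attached to $\tau$, let $\Gamma$ be the image in $\psl_2(\RR)^{2}$ of a congruence subgroup $\Gamma(\pP)$ of the norm-one units for a prime $\pP$ of $F$ that is stable under $\tau$, and check that $\mu$ normalizes $\Gamma$. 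Provided $\Gamma$ is torsion-free, this produces the smooth Shimura surface $X=\Gamma\backslash(\HH\times\HH)$ carrying the desired $\sigma$.

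The geometric genus is then computed arithmetically. Since $q(X)=0$ and $c_1^2(X)=2c_2(X)=8(1+p_g(X))$, it is enough to determine the Euler number $c_2(X)$, which by Hirzebruch proportionality with $\PP^1\times\PP^1$ is a fixed multiple of the covolume of $\Gamma$. Using the Euler--Poincar\'e (Shimizu) volume formula, this covolume is expressed through $\zeta_F(-1)$ (equivalently $\zeta_F(2)$ and the discriminant $d_F$), the discriminant of $B$, and the level index $[\Gamma(1):\Gamma(\pP)]$. The remaining task is to arrange $F$, the ramification of $B$, and $\pP$ so that $c_2(X)=4(1+p_g(X))$ equals $24$ for $p_g=5$ and $28$ for $p_g=6$; this is a finite search within the admissible data tabulated earlier (see Theorem \ref{list_real_quadratic_candidates}).

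The principal obstacle is verifying smoothness, i.e.\ torsion-freeness of $\Gamma$. I would rule out elliptic elements by analyzing the possible eigenvalues of finite-order units of $\OO$: any such unit forces a cyclotomic field to embed in $B$ compatibly with the two real places, and the congruence condition at $\pP$ eliminates the finitely many surviving candidates. One must check in tandem that $\mu$ normalizes $\Gamma$ itself and not merely the ambient commensurator; this is precisely where the $\tau$-stability of both $\OO$ and $\pP$ is needed, since a mismatch would only yield an involution on some larger quotient rather than on $X$.

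Finally, for $k=5$ I would identify the fixed curve. By the structure result for involutions of second kind (the second Theorem stated in the introduction), $C$ is a disjoint union of smooth Shimura curves, and each component is the image of a $\mu$-twisted diagonal $\{(z,f(z))\}\subset\HH\times\HH$ fixed by some reflection $\gamma\mu$. The stabilizer of such a diagonal inside the group generated by $\Gamma$ and $\mu$ is the unit group of a quaternion order over $\QQ$, realized as the fixed subalgebra of the involution inside $B$; the associated compact Shimura curve is the component in question. I would compute its genus from the Gauss--Bonnet area formula for this rational order together with its elliptic data, obtaining $g(C)=5$, and deduce irreducibility from the triviality of the relevant class number, equivalently from there being a single $\Gamma$-orbit of such diagonals. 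The equality $g(C)=p_g(X)=5$ then also certifies, through the second Theorem, that $Z=X/\sigma$ is of general type with $c_1^2(Z)=9-p_g(X)=4$.
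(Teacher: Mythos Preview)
Your overall strategy matches the paper's approach, but there is a concrete gap in the $p_{g}=6$ case. You commit at the outset to a real quadratic field $F$ and then appeal to Theorem~\ref{list_real_quadratic_candidates} for the ``finite search.'' That theorem only tabulates \emph{candidates} satisfying necessary numerical conditions; it does not guarantee a torsion-free $\tau$-stable subgroup of the required index actually exists. In fact the paper shows (Remark~\ref{remark_deg_2}) that among the natural Borel- and unipotent-type congruence subgroups $\Gamma_{\OO}^{B}(\qQ)$, $\Gamma_{\OO}^{U}(\qQ)$ over real quadratic fields, the \emph{only} admissible example in the range $e\le 36$ is the $\QQ(\sqrt{33})$ construction with $e=24$ (i.e.\ $p_{g}=5$). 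For $p_{g}=6$ the paper leaves real quadratic fields entirely and works over the quartic field of discriminant $725$ containing $\ell=\QQ(\sqrt{5})$, using a $k/\ell$-involution rather than a $k/\QQ$-involution. Your restriction to quadratic $F$ therefore does not yield the second example.

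A second, related issue: your notation $\Gamma(\pP)$ suggests a principal congruence subgroup, but the paper observes that for a quadratic field the invariance condition $\overline{\qQ}=\qQ$ together with the index constraints rules out admissible principal congruence subgroups entirely. The working examples use instead the Borel-type group $\Gamma_{\OO}^{B}(\qQ_{11})$ over $\QQ(\sqrt{33})$ for $p_{g}=5$ and the unipotent-type group $\Gamma_{\OO}^{U}(\qQ_{29})$ over the quartic field for $p_{g}=6$; torsion-freeness is verified via the Eichler embedding criterion (Lemma~\ref{eichlercrit}) and Lemma~\ref{congruencetorsion} respectively, not by a generic cyclotomic-embedding argument.

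Your treatment of the fixed curve for $p_{g}=5$ is on the right track and agrees with the paper: the $\tau$-fixed subalgebra is the rational quaternion algebra $A'=\left(\frac{2,5}{\QQ}\right)$, and the relevant component is the Shimura curve attached to the Borel-type subgroup $\Gamma_{\OO'}^{B}(11)$, whose genus is $5$ by the volume formula. Irreducibility then follows simply because a single component already has genus $5=p_{g}(X)$, which saturates the bound $g\le p_{g}(X)$; no class-number argument is needed.
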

In the light of some open questions concerning fundamental groups
of surfaces with geometric genus zero, see \cite{bauercatanesepignatelli2},
an example of a smooth Shimura surface with $p_{g}=2$ admitting an
involution of second kind would be highly interesting. However, finding
such a surface turns out to be very difficult (see Remark \ref{remark_deg_2}). 

This paper mixes two fields: Theory of Shimura surfaces and classical
algebraic geometry of surfaces. Shimura surfaces are closely related
to Hilbert modular surfaces (which were first systematically studied
by Hirzebruch, see for instance \cite{VanderGeer}, but are less known
and studied. It was also one of our aim to develop that theory of
Shimura surfaces.

The paper is organized as follows: In Section \ref{sec:Involutions and group gamma}
we discuss the conditions on $\Gamma$ under which the Shimura surface
$X_{\Gamma}$ has an involution of second kind. In Section \ref{sec:Quotient and fundamental group}
we study the quotient surface of a smooth Shimura surface by the action
of an involution of second kind and we prove that its fundamental
group is finite. In Section \ref{sec:Examples}, we investigate examples
of Shimura surfaces with low geometric genus admitting an involution
of second kind. In particular we develop new tools to create smooth
quaternionic Shimura surfaces and we make a systematic study of Shimura
surfaces defined over quadratic fields with low geometric genus. We
give examples of surfaces with $p_{g}=5,6$ admitting an involution
of second kind. Finally, in section \ref{sec:determination-of-the-curve}
we present the method to identify the Shimura curve fixed by an involution
of second kind acting on a Shimura surface and we furthermore examine
the example with $p_{g}=5$.

\section{Involutions of second kind acting on Shimura surfaces}

\label{sec:Involutions and group gamma}

\subsection{Quaternionic Shimura surfaces.\label{sub:Notations,-quaternionic-Shimura}}

Let us recall the construction of quaternionic Shimura surfaces.

Let $k$ be a totally real number field of degree $n=[k:\QQ]$. The
places of $k$ are the equivalence classes of valuations on $k$,
and the infinite places of $k$ correspond to embeddings $\sigma_{i}\in\Hom_{\QQ}(k,\RR)$,
$i=1,\ldots,n$.

Let $A$ be a division quaternion algebra whose center is $k$. For
every place $v$ of $k$, we denote by $k_{v}$ the completion of
$k$ with respect to $v$ and define $A_{v}=A\otimes_{k}k_{v}$. The
algebra $A$ is \emph{ramified} at $v$ if $A_{v}$ is a division
algebra over $k_{v}$ and \emph{unramified} otherwise, that is, if
$A_{v}\cong M_{2}(k_{v})$. By the classical theorem of Hasse and
the product formula for Hilbert symbols, the isomorphism class of
$A$ is uniquely determined by the set $Ram(A)$ of ramified places
of $A$. Assume that $A$ is unramified at the first, say $m\leq n$,
infinite places $\sigma_{1},\ldots,\sigma_{m}$ and ramified at the
remaining $n-m$ infinite places. Equivalently we assume that we have
an isomorphism 
\[
A\otimes_{\mathbb{Q}}\mathbb{R}\to M_{2}(\mathbb{R})^{m}\times\mathbf{H}^{n-m}
\]
 where $\mathbf{H}$ denotes the skew field of Hamiltonian quaternions.
Then, $A$ is uniquely determined up to an isomorphism by the tuple
\[
(k,\sigma_{1},\ldots,\sigma_{m},\mathfrak{p}_{1},\dots,\mathfrak{p}_{r}),
\]
 where $\mathfrak{p}_{1},\dots,\mathfrak{p}_{r}$ are the non-archimedean
places of $k$ where $A$ is ramified, and $\sigma_{1},\ldots,\sigma_{m}$
are the infinite places of $k$ such that $A\otimes_{\sigma_{i}(k)}\mathbb{R}\cong M_{2}(\mathbb{R})$.
Since we are interested in algebraic surfaces, we suppose that there
are exactly two infinite places $\sigma_{1},\sigma_{2}$ such that
$A$ is unramified at the $\sigma_{i}$. With this fixed ramification
behaviour at the infinite places, the isomorphism class of $A$ depends
only on the tuple $(\mathfrak{p}_{1},\dots,\mathfrak{p}_{r})$. Let
us write 
\[
A=A(k,\mathfrak{p}_{1},\dots,\mathfrak{p}_{r}),
\]
(omitting the explicit indication of two unramified infinite places)
for such a quaternion algebra in the following. The subgroup $A^{+}$
of $A$ consisting of the units of $A$ having totally positive reduced
norm can be identified via the isomorphism $A\otimes_{\QQ}\RR\stackrel{f}{\rightarrow}M_{2}(\mathbb{R})^{2}\times\mathbf{H}^{n-2}$
with a subgroup of $GL_{2}^{+}(\mathbb{R})^{2}\times\mathbf{H}^{*n-2}$,
and projecting to the first two factors gives an injection of $A^{+}$
into $GL_{2}^{+}(\mathbb{R})^{2}$. We denote by 
\[
\rho=(\rho_{1},\rho_{2}):A\rightarrow M_{2}(\mathbb{R})^{2}
\]
this representation of $A$. Note that these $\rho_{i},\, i=1,2$
are extensions of two morphisms $k\rightarrow\mathbb{R}$ (where $\mathbb{R}\subset M_{2}(\mathbb{R})$
is identified with diagonal matrices) corresponding to the places
$\sigma_{1},\sigma_{2}$.

Let us denote by $\mathcal{O}$ and $\mathcal{O}_{k}$ a maximal order
of $A$ and the ring of integers of $k$. Let $\mathcal{O}^{*}$ denote
the group of units of $\mathcal{O}$, $\mathcal{O}^{+}$ the group
of units in $\mathcal{O}$ with totally positive reduced norm and
$\mathcal{O}^{1}\subset\mathcal{O}$ the group of units of reduced
norm $1$. The group $\mathcal{O}^{1}$ is via $\rho$ a discrete
subgroup of $SL_{2}(\mathbb{R})^{2}$, whereas $\mathcal{O}^{+}$
is embedded as a discrete subgroup in $GL_{2}^{+}(\mathbb{R})^{2}$.

The group $A^{+}$ and any subgroup $G\subset A^{+}$ acts on the
product $\mathbb{H}\times\mathbb{H}$ of two copies of the upper half
plane as follows: if $(z,w)\in\HH\times\HH$ is a point and $g\in G$,
then $\rho(g)$ is represented by two matrices $g_{i}=\rho_{i}(g)=\left(\begin{array}{cc}
a_{i} & b_{i}\\
c_{i} & d_{i}
\end{array}\right)$ ($i=1,2$) and: 
\[
g(z,w)=(g_{1}z,g_{2}w):=(\frac{a_{1}z+b_{1}}{c_{1}z+d_{1}},\frac{a_{2}w+b_{2}}{c_{2}w+d_{2}}),
\]
 where $a_{1},a_{2}$ (resp. $(b_{1},b_{2})\ldots$) are conjugates
with respect to the places $\sigma_{1},\sigma_{2}$ over the Galois
closure of $k$ in $\mathbb{R}$. The action of $G$ is not effective;
the center $Z(G)$ acts trivially on $\mathbb{H}\times\mathbb{H}$
and therefore we will consider subgroups $\Gamma=G/Z(G)\subset A^{+}/k^{\ast}$
rather than subgroups $G\subset A^{+}$. Let us write $\Gamma_{\mathcal{O}}(1)$
or sometimes simply $\Gamma(1)$ for the group $\mathcal{O}^{1}/\{\pm1\}$.
The group $\Gamma(1)$ and also any subgroup $\Gamma$ of $A^{+}/k^{\ast}$
commensurable with $\Gamma(1)$ acts properly discontinuously on $\mathbb{H}\times\mathbb{H}$.

The quotient $X_{\Gamma}=\mathbb{H}\times\mathbb{H}/\Gamma$ is a
compact algebraic surface, which will be called \emph{quaternionic
Shimura surface} (corresponding to $\Gamma$) in the sequel.

\subsection{The involution exchanging factors and involutions of the second kind.}

Let, as above, $k$ be a totally real number field and consider the
quaternion algebra $A=A(k,\mathfrak{p}_{1},\dots,\mathfrak{p}_{r})$
over $k$ unramified at two infinite places of $k$ and ramified at
all the other infinite places of $k$ and at the finite places $\mathfrak{p}_{1},\ldots,\mathfrak{p}_{r}$.
\begin{defn}
An \emph{involution of second kind} on $A$ is a map $\tau:A\to A$
such that $\tau^{2}(a)=a$, $\tau(a+b)=\tau(a)+\tau(b)$, $\tau(ab)=\tau(a)\tau(b)$
for all $a,b\in A$ and such that the restriction of $\tau$ to $k$
is a non-trivial automorphism of $k$. 
\end{defn}
Let $\ell=k^{\tau}$ be the fixed field of $\tau$. Then $k/\ell$
is a quadratic extension and in this case we will say that $\tau$
is a \emph{$k/\ell$-involution}. 
\begin{rem}
(See also \cite[Lemma 4.2]{Granath}). Let $\tau$ and $\sigma$ be
two $k/\ell$-involutions of second kind on $A$. Then there exists
$m\in A^{*}$ such that $\sigma(a)=m^{-1}\tau(a)m$ and $\tau(m)^{*}=m$,
where $a\to a^{*}$ is the canonical anti-involution on $A$, that
is, the uniquely determined anti-involution $\ast:A\to A$ such that
the reduced norm is $Nrd(x)=xx^{*}$ and the reduced trace is $Trd(x)=x+x^{*}$.

Following \cite{Granath} we choose to work with involutions on $A$
which are, by above definition, particularly ring homomorphisms of
$A$. In the literature more often one works with anti-involutions,
that is, with maps $\rho:A\to A$ satisfying $\rho(ab)=\rho(b)\rho(a)$.
These two kinds of maps are linked by the canonical anti-involution.
First observe that every involution of second kind commutes with the
canonical anti-involution, that is, for every $a\in A$ we have $\tau(a)^{*}=\tau(a^{*})$,
see \cite{Granath}. From this it follows that there is one-to-one
correspondence between involutions and anti-involutions on $A$ given
by the rule 
\[
\tau\mapsto\tau^{*}\ \text{and}\ \rho\mapsto\rho^{*}
\]
where $\tau^{*}$, resp.~$\rho^{*}$, is defined as $\tau^{*}(a)=\tau(a)^{*}$,
resp.~$\rho^{*}(a)=\rho(a)^{*}$, for every $a\in A$. In this way,
involutions of second kind correspond to classically studied anti-involutions
of second kind.
\end{rem}
A criterion for the existence of anti-involutions of second kind is
well-known and goes back to work of Albert and Landherr. 
\begin{prop}
\label{existence_type_2}(See also \cite[Lemma 4.3]{Granath} and
more generally \cite[Theorem 3]{Landherr}). Let $k/\ell$ be a quadratic
extension of totally real fields. Let $\alpha$ be the non-trivial
$\ell$-automorphism of the extension $k/\ell$ and let $A=A(k,\mathfrak{p}_{1},\dots,\mathfrak{p}_{r})$.
Then, there exists a $k/\ell$-involution $\tau$ of second kind on
$A$ (i.~e.~$\tau(xa)=x^{\alpha}\tau(a)\ \text{for all}\ x\in k,a\in A$)
if and only if 
\begin{enumerate}
\item $r$ is even and after a suitable renumbering of the $\mathfrak{p}_{i}$,
we have $\mathfrak{p}_{2i-1}=\mathfrak{p}_{2i}^{\alpha}$ and $\mathfrak{p}_{i}\neq\mathfrak{p}_{i}^{\alpha}$
($i=1,\ldots,r/2$). 
\item With the notations and hypothesis of section \ref{sub:Notations,-quaternionic-Shimura},
it holds that $\sigma_{2}=\sigma_{1}\circ\alpha$. 
\end{enumerate}
\end{prop}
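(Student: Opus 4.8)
The plan is to reduce the existence of a $k/\ell$-involution of second kind on $A$ to the existence of the canonical anti-involution together with an element that ``twists'' it by the Galois automorphism $\alpha$, and then to read off the precise local conditions from this twist. Concretely, I would begin by fixing the canonical anti-involution $a\mapsto a^{*}$ on $A$ (whose restriction to $k$ is the identity), and recall that any $k/\ell$-involution $\tau$ of second kind satisfies $\tau(a)^{*}=\tau(a^{*})$, so that $\rho:=\tau^{*}$, defined by $\rho(a)=\tau(a)^{*}$, is an anti-involution whose restriction to $k$ is exactly $\alpha$. Thus it suffices to produce an anti-involution $\rho$ of second kind inducing $\alpha$ on $k$, and the passage between $\tau$ and $\rho$ is the bijection already recorded in the Remark. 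This turns the problem into the classical question answered by Albert and Landherr.

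Next I would invoke the standard structure theory: if $A$ carries an anti-involution $\rho$ with $\rho|_{k}=\alpha$, then $A$ must be isomorphic (as an $\ell$-algebra) to $A_{0}\otimes_{\ell}k$, or more precisely the corestriction (norm) $\mathrm{Cor}_{k/\ell}(A)$ of $A$ from $k$ down to $\ell$ must be trivial in the Brauer group $\mathrm{Br}(\ell)$; this is the Albert criterion for the descent of a quaternion algebra under a quadratic extension to admit an involution of the second kind. The heart of the argument is therefore to compute $\mathrm{Cor}_{k/\ell}(A)$ place by place over $\ell$ and to show that its triviality is equivalent to the combinatorial condition in (1) on the ramified primes $\mathfrak{p}_{i}$, together with the archimedean condition in (2). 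I would carry this out by examining the local invariants $\mathrm{inv}_{w}$ of $\mathrm{Cor}_{k/\ell}(A)$ at each place $w$ of $\ell$, using that the corestriction adds up the local invariants of $A$ over the places of $k$ lying above $w$.

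For a finite place $w$ of $\ell$: if $w$ is inert or ramified in $k/\ell$, there is a single place above it and the local invariant of the corestriction vanishes automatically, while if $w$ splits as $\mathfrak{p}\,\mathfrak{p}^{\alpha}$ the two contributions are $\tfrac12$ and $\tfrac12$ precisely when both $\mathfrak{p},\mathfrak{p}^{\alpha}\in Ram(A)$, summing to $0$ in $\QQ/\ZZ$ exactly under the pairing $\mathfrak{p}_{2i-1}=\mathfrak{p}_{2i}^{\alpha}$; this is where condition (1), including the parity of $r$ and the requirement $\mathfrak{p}_{i}\neq\mathfrak{p}_{i}^{\alpha}$, comes from. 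For the archimedean places the analogous computation at the real place of $\ell$ below $\sigma_{1}$ forces its two extensions $\sigma_{1},\sigma_{2}$ to $k$ to have matching splitting behaviour, and since $A$ is unramified exactly at $\sigma_{1},\sigma_{2}$, this matching is exactly the statement $\sigma_{2}=\sigma_{1}\circ\alpha$ of condition (2); at the remaining real places of $\ell$, which either stay inert under $k/\ell$ or carry two ramified extensions, the invariants cancel as in the split finite case. Assembling these local computations gives $\mathrm{Cor}_{k/\ell}(A)=0$ in $\mathrm{Br}(\ell)$ if and only if (1) and (2) hold, which by the Albert--Landherr theorem is equivalent to the existence of $\rho$, hence of $\tau$. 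I expect the main obstacle to be the careful archimedean bookkeeping: one must verify that pinning down the two unramified infinite places to be $\alpha$-conjugate is genuinely necessary and not merely sufficient, i.e.\ that a $k/\ell$-involution cannot exist when $\sigma_{1},\sigma_{2}$ fail to be swapped by $\alpha$, which requires tracking how $\rho$ permutes the archimedean completions and ruling out the configurations where both unramified places lie above distinct places of $\ell$.
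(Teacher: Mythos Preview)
Your route via the Albert--Landherr criterion and the corestriction $\mathrm{Cor}_{k/\ell}(A)$ is a perfectly standard and valid strategy, and it is genuinely different from the paper's proof: the paper argues directly that a $k/\ell$-involution exists on $A$ if and only if $A$ descends to a quaternion algebra $A'$ over $\ell$ with $A'\otimes_{\ell}k\cong A$, builds $A'$ explicitly as the $\tau$-fixed subalgebra, and then compares $Ram(A')$ with $Ram(A)$ prime by prime. Your Brauer-group computation, done correctly, would yield the same local conditions more uniformly.

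However, your local analysis contains an error that reverses the logic at the key step. You write that if a finite place $w$ of $\ell$ is inert or ramified in $k/\ell$, then ``the local invariant of the corestriction vanishes automatically.'' This is false: the general formula is
\[
\mathrm{inv}_{w}\bigl(\mathrm{Cor}_{k/\ell}(A)\bigr)=\sum_{v\mid w}\mathrm{inv}_{v}(A),
\]
so when there is a single place $v$ over $w$ one gets $\mathrm{inv}_{w}(\mathrm{Cor}\,A)=\mathrm{inv}_{v}(A)$, which is $\tfrac12$, not $0$, whenever $v\in Ram(A)$. It is precisely the \emph{non}-vanishing of the corestriction at such places that forces $A$ to be unramified at every $v$ with $v=v^{\alpha}$; this is where the condition $\mathfrak{p}_{i}\neq\mathfrak{p}_{i}^{\alpha}$ really comes from, not from the split case as you suggest. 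Once this is corrected, the remaining split-place and archimedean bookkeeping goes through (note incidentally that since $k$ and $\ell$ are both totally real, every real place of $\ell$ splits in $k$, so there are no ``inert'' archimedean places to worry about), and your argument becomes complete.
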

\begin{proof}
Let us first show that there exists an $k/\ell$-involution $\tau$
on $A$ if and only if there exists a quaternion subalgebra $A'\subset A$
whose center is $\ell$.\\
 Namely, if such $A'$ exists then $A'\otimes_{\ell}k=A$ and $a\otimes x\stackrel{\tau}{\mapsto}a\otimes x^{\alpha}$
is a $k/\ell$-involution. Conversely, let $A'=\{a\in A\mid\tau(a)=a\}$.
This is a $\ell$-subalgebra of $A$. Let $\theta\in k$ be such that
$k=\ell(\theta)$ and $\tau(\theta)=-\theta$. We claim that $A=A'\oplus\theta A'$.
To see this we write an element $a\in A$ as $a=\frac{{1}}{2}(a+\tau(a))+\frac{1}{2}\theta\frac{a-\tau(a)}{\theta}$.
It follows that $A=A'+\theta A'$. Clearly, an element $a\in A'\cap\theta A'$
satisfies $\tau(a)=a=-a,$ hence $a=0$ and it follows that $kA'\cong k\otimes_{\ell}A'=A$
and therefore $A'$ is a quaternion algebra over $\ell$ since $A=A'\otimes_{\ell}k$
is a quaternion algebra.

Now we compare the sets of ramification of $A$ and $A'$. If $v$
is a place of $\ell$ such that $A'\otimes_{\ell}\ell_{v}\cong M_{2}(\ell_{v})$,
then for any place $w$ of $k$ lying over $v$ we have $A\otimes_{k}k_{w}\cong M_{2}(k_{w})$,
so if $A'$ is unramified at a place $v$ of $\ell$, then $A$ is
unramified at every place $w$ of $k$ lying over $v$. Particularly,
since $k$ and $\ell$ are totally real, if $v$ is an infinite place
of $\ell$, that is, an embedding $v:\ell\hookrightarrow\RR$ then
there are exactly two places of $k$ lying over $v$, that is, two
embeddings $w_{1},w_{2}:k\hookrightarrow\RR$ extending $v$ and satisfying
$w_{2}=w_{1}\circ\alpha$.

Assume now that $v$ is a place of $\ell$ where $A'$ is ramified.
If $v$ is an infinite place of $\ell$, then, again since $k$ and
$\ell$ are both totally real, $A$ is ramified at every place $w$
of $k$ over $v$. Assume that $v$ is a finite place of $\ell$ corresponding
to a prime ideal $\mathcal{P}$. There are two different possibilities:
$\mathcal{P}$ is split in $k$, that is, $\mathcal{P}\mathcal{O}_{k}=\mathfrak{p}\mathfrak{p}^{\alpha}$,
with two prime ideals $\mathfrak{p}\neq\mathfrak{p}^{\alpha}$ of
$k$. As $k_{\mathfrak{p}}=\ell_{\mathcal{P}}$, $A'$ is ramified
at $\mathfrak{\mathcal{P}}$ if and only if $A$ is ramified at $\mathfrak{p}$.
If $\mathcal{P}$ is non-split in $k$, i.~e., if only one prime
$\mathfrak{p}$ is lying over $\mathcal{P}$, then by the theorem
of Hasse, the field $k_{\mathfrak{p}}$ can be embedded into $A'\otimes_{\ell}\ell_{\mathcal{P}}$
and therefore $A'\otimes_{\ell}\ell_{\mathcal{P}}\cong M_{2}(k_{\mathfrak{p}})$.
In this case $A$ is unramified at $\mathfrak{p}$. It follows, that
the existence of a $k/\ell$-involution implies the conditions (1)
and (2).

Conversely, choosing the set of ramified places according to (1) and
(2), we can construct a quaternion algebra $A'$ over $\ell$ such
that $A'\otimes_{\ell}k$ and $A$ are ramified exactly at the same
set of primes. This implies an isomorphism $A\cong A'\otimes_{\ell}k$
and thus the existence of an $k/\ell$-involution of second kind \end{proof}
\begin{rem}
Let us note that with notations of section \ref{sub:Notations,-quaternionic-Shimura}
the above proposition implies that in the case of the existence of
a $k/\ell$-involution $\tau$ on $A$ we particularly have the relation
$\rho_{2}=\rho_{1}\circ\tau$ on $A$. Note also that the condition
(2) is superfluous in the case where $k$ is a real quadratic field,
since there is only one choice of $\sigma_{2}=\sigma_{1}\circ\alpha$. 
\end{rem}
Let $\tau$ be an involution of second kind on $A$. Let $\Gamma\subset A^{+}/k^{\ast}$
be a subgroup stable by $\tau$, that is, for all $\gamma\in\Gamma$
we have $\tau(\gamma)\in\Gamma$. Since $\rho_{2}=\rho_{1}\circ\tau$,
the images of $\Gamma$ in $PSL_{2}(\mathbb{R})$ by $\rho_{1}$ and
$\rho_{2}$ are the same, and since this image is isomorphic to $\Gamma$
we denote it also by $\Gamma$. In order to avoid more long-winded
notations, let us write for an involution of second kind $\tau$ shortly
\[
\tau(a)=\overline{a}\ \text{for all}\ a\in A.
\]
In particular, the non-trivial $\ell$-automorphism of $k$ is denoted
by $x\mapsto\overline{x}$ (for $x\in k$). Identifying $k$ with
$\sigma_{1}(k)$, say, the action of any element $\gamma\in A^{+}/k^{\ast}$
on $\mathbb{H}\times\mathbb{H}$ given by
\[
\gamma(z_{1},z_{2})=(\rho_{1}(\gamma)z_{1},\rho_{2}(\gamma)z_{2})
\]
 is written as
\[
\gamma(z_{1},z_{2})=(\gamma z_{1},\bar{\gamma}z_{2}).
\]

Let $\mu:\mathbb{H}\times\mathbb{H}\rightarrow\mathbb{H}\times\mathbb{H}$
be the involution that exchanges the two factors. The group $Aut(\mathbb{H}\times\mathbb{H})$
is the semi direct product of $Aut(\mathbb{H})\times Aut(\mathbb{H})$
and the group $\mathbb{Z}/2\mathbb{Z}$ generated by $\mu$. 
\begin{prop}
\label{pro:involution second kind implies involution} Let $\Gamma$
be a subgroup of $A^{+}$ commensurable with $\mathcal{O}^{1}$ for
some maximal order $\mathcal{O}$ in $A$. Suppose that there exists
an involution $\tau$ of second kind on $A$ preserving $\Gamma$.
Then, the automorphism $\mu$ of $\mathbb{H}\times\mathbb{H}$ induces
an involution $\sigma$ on the surface $X{}_{\Gamma}=\mathbb{H}\times\mathbb{H}/\Gamma$. \end{prop}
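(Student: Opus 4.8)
The plan is to show that the swap $\mu$ normalizes $\Gamma$ inside $\aut(\HH\times\HH)$. Once $\mu\Gamma\mu^{-1}=\Gamma$ as groups of transformations of $\HH\times\HH$, the biholomorphism $\mu$ descends to a biholomorphic automorphism $\sigma$ of the analytic quotient $X_\Gamma=\HH\times\HH/\Gamma$, and $\sigma^2=1$ follows immediately from $\mu^2=1$. Thus everything reduces to a single conjugation computation.

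Recall the action $\gamma(z_1,z_2)=(\gamma z_1,\bar\gamma z_2)$ for $\gamma\in\Gamma$, where $\bar\gamma=\tau(\gamma)$ and both M\"obius transformations are taken through $\rho_1$ (this uses $\rho_2=\rho_1\circ\tau$, noted after Proposition~\ref{existence_type_2}); the swap is $\mu(z_1,z_2)=(z_2,z_1)$ and $\mu^{-1}=\mu$. The key step is to compute $\mu\gamma\mu^{-1}$ on a point $(z_1,z_2)$: applying $\mu$, then $\gamma$, then $\mu$ yields $(z_1,z_2)\mapsto(z_2,z_1)\mapsto(\gamma z_2,\bar\gamma z_1)\mapsto(\bar\gamma z_1,\gamma z_2)$. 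On the other hand, since $\tau^2=\mathrm{id}$ gives $\overline{\bar\gamma}=\gamma$, the element $\bar\gamma$ acts by $\bar\gamma(z_1,z_2)=(\bar\gamma z_1,\gamma z_2)$. Hence $\mu\gamma\mu^{-1}$ acts on $\HH\times\HH$ exactly as $\tau(\gamma)$. As $\Gamma$ is $\tau$-stable we have $\tau(\gamma)\in\Gamma$, so $\mu\gamma\mu^{-1}\in\Gamma$ for every $\gamma\in\Gamma$; that is, $\mu$ normalizes $\Gamma$.

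To finish I would make the descent explicit. Let $\pi:\HH\times\HH\to X_\Gamma$ be the quotient map and set $\sigma(\pi(x)):=\pi(\mu(x))$. If $\pi(x)=\pi(y)$, say $y=\gamma x$ with $\gamma\in\Gamma$, then $\mu(y)=(\mu\gamma\mu^{-1})\mu(x)=\tau(\gamma)\,\mu(x)$ with $\tau(\gamma)\in\Gamma$, so $\pi(\mu(y))=\pi(\mu(x))$ and $\sigma$ is well defined; since $\mu$ is biholomorphic and normalizes $\Gamma$, the induced $\sigma$ is a biholomorphic automorphism of $X_\Gamma$ with $\sigma^2=1$. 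Moreover $\sigma\neq\mathrm{id}$: for fixed $\gamma$ the locus $\{x:\mu(x)=\gamma x\}$ is the fixed set of the factor-exchanging transformation $\gamma^{-1}\mu$, hence a proper analytic subset, so $\mu(x)\in\Gamma x$ holds only on a countable union of proper subsets and $\mu$ cannot act trivially on $X_\Gamma$.

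I expect the only genuine content to be the conjugation identity $\mu\gamma\mu^{-1}=\tau(\gamma)$, which is precisely where the two hypotheses $\rho_2=\rho_1\circ\tau$ and $\tau^2=\mathrm{id}$ enter, together with the $\tau$-stability of $\Gamma$. The passage to the quotient is then the standard fact that an element of $\aut(\HH\times\HH)$ normalizing the acting group descends to its orbit space, so I anticipate no real obstacle beyond careful bookkeeping with the two representations $\rho_1,\rho_2$.
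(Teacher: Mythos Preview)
Your proof is correct and follows essentially the same approach as the paper: both show that $\mu$ normalizes $\Gamma$, the paper by checking directly that $\mu(\Gamma(z_1,z_2))=\Gamma(z_2,z_1)$ as orbits, you by the equivalent conjugation computation $\mu\gamma\mu^{-1}=\tau(\gamma)$. Your version is slightly more explicit about the descent and adds the non-triviality check $\sigma\neq\mathrm{id}$, which the paper omits here.
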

\begin{proof}
We need to prove that $\mu$ sends an orbit under the action of $\Gamma$
to another $\Gamma$-orbit. We have:
\[
\Gamma(z_{1},z_{2})=\{(\gamma z_{1},\bar{\gamma}z_{2})|\,\gamma\in\Gamma\},
\]
 thus
\[
\mu(\Gamma(z_{1},z_{2}))=\{(\bar{\gamma}z_{2},\gamma z_{1})|\,\gamma\in\Gamma\}.
\]
 Since $\tau:a\to\bar{a}$ preserves $\Gamma$, we get:
\[
\mu(\Gamma(z_{1},z_{2}))=\{(\gamma z_{2},\bar{\gamma}z_{1})|\,\gamma\in\Gamma\}=\Gamma(z_{2},z_{1}),
\]
 therefore $\mu(\Gamma(z_{1},z_{2}))=\Gamma(z_{2},z_{1})$ is an orbit
under the action of $\Gamma$ on $\mathbb{H}\times\mathbb{H}$ and
$\mu$ acts on the orbit space $X_{\Gamma}$. 
\end{proof}

\subsection{Automorphisms of Shimura surfaces.}

Let $\mu\in\aut\mathbb{H}\times\mathbb{H}$ be the involution that
exchanges the two factors. Let $X_{\Gamma}=\mathbb{H}\times\mathbb{H}/\Gamma$
be a smooth quaternionic Shimura surface. By \cite[Theorem 3.12]{DzambicRoulleau}
and its proof, we get: 
\begin{prop}
\label{pro:fixed point of dimension 1}Suppose that the fixed locus
$C$ of an automorphism $\sigma$ of $X=X_{\Gamma}$ contains an one-dimensional
component. Then $\sigma$ is an involution that lifts on the universal
cover to (a conjugate of) $\mu$. 
\end{prop}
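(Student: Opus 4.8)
The plan is to lift $\sigma$ to the universal cover $\mathbb{H}\times\mathbb{H}$ and exploit the semidirect product structure $\aut(\mathbb{H}\times\mathbb{H})=(\aut(\mathbb{H})\times\aut(\mathbb{H}))\rtimes\langle\mu\rangle$ together with the irreducibility of $\Gamma$. Since $X=\mathbb{H}\times\mathbb{H}/\Gamma$ is smooth, $\Gamma$ is torsion-free and $\pi_1(X)=\Gamma$, so $\sigma$ lifts to an automorphism $\tilde\sigma$ of $\mathbb{H}\times\mathbb{H}$ normalizing $\Gamma$, the full set of lifts being the coset $\Gamma\tilde\sigma$. The preimage of the one-dimensional component $C$ is the locally finite union $\bigcup_{\gamma\in\Gamma}\mathrm{Fix}(\gamma\tilde\sigma)$ of analytic sets; by proper discontinuity only finitely many of them meet a given compact set, so one of them, $\mathrm{Fix}(\gamma_0\tilde\sigma)$, is already one-dimensional. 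Replacing $\tilde\sigma$ by $\gamma_0\tilde\sigma$, I may therefore assume that $\tilde\sigma$ itself has a one-dimensional fixed locus.

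First I would treat the diagonal case $\tilde\sigma=(f,g)$ with $f,g\in PSL_2(\mathbb{R})$. Here $\mathrm{Fix}(\tilde\sigma)=\mathrm{Fix}(f)\times\mathrm{Fix}(g)$, and since a nontrivial element of $PSL_2(\mathbb{R})$ fixes a single point of $\mathbb{H}$ when elliptic and no point of $\mathbb{H}$ otherwise, a one-dimensional fixed locus forces exactly one of $f,g$ to be the identity and the other to be a nontrivial elliptic element; say $\tilde\sigma=(\mathrm{id},g)$ with $g\neq\mathrm{id}$. Conjugation by $\tilde\sigma$ sends $(\gamma_1,\gamma_2)\in\Gamma$ to $(\gamma_1,g\gamma_2 g^{-1})\in\Gamma$; applying the first projection $p_1\colon\Gamma\to PSL_2(\mathbb{R})$, which is injective because $\Gamma$ is an irreducible lattice arising from a division quaternion algebra, shows $\tilde\sigma\gamma\tilde\sigma^{-1}=\gamma$ for all $\gamma$. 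Hence $g$ centralizes $p_2(\Gamma)$, which is dense in $PSL_2(\mathbb{R})$ by irreducibility, so $g$ is central and thus trivial, a contradiction. The symmetric argument rules out $(f,\mathrm{id})$, and therefore $\tilde\sigma$ cannot be diagonal.

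Consequently $\tilde\sigma$ is of swap type, $\tilde\sigma(z_1,z_2)=(az_2,bz_1)$ for some $a,b\in PSL_2(\mathbb{R})$. A direct computation gives $\tilde\sigma^2=(ab,ba)$, a diagonal automorphism, and the fixed locus $\{(z_1,bz_1):ab\,z_1=z_1\}$; this is one-dimensional precisely when $ab=\mathrm{id}$, that is $b=a^{-1}$, in which case $\tilde\sigma^2=\mathrm{id}$. Since $\tilde\sigma$ has a one-dimensional fixed locus by construction, I conclude $\tilde\sigma^2=\mathrm{id}$. As $\tilde\sigma$ is of swap type while $\Gamma$ consists of diagonal elements, $\tilde\sigma\notin\Gamma$, so $\sigma=\tilde\sigma\bmod\Gamma$ is a genuine involution of $X$. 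Finally, conjugating by the diagonal element $h=(\mathrm{id},a)$ sends $\tilde\sigma(z_1,z_2)=(az_2,a^{-1}z_1)$ to $h\tilde\sigma h^{-1}=\mu$, which exhibits $\sigma$ as lifting to a conjugate of $\mu$.

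I expect the main subtlety to be the two irreducibility inputs used in the diagonal case: the injectivity of the coordinate projections $p_i\colon\Gamma\to PSL_2(\mathbb{R})$ and the density of their images. Both are standard for irreducible cocompact arithmetic lattices coming from a division quaternion algebra over a totally real field with exactly two unramified infinite places --- injectivity because the kernel $\Gamma\cap(\{1\}\times PSL_2(\mathbb{R}))$ would correspond to scalar elements of $A^{+}/k^{\ast}$, and density because an irreducible lattice cannot project to a discrete subgroup of a single factor --- and these are precisely the facts extracted from the proof of \cite[Theorem 3.12]{DzambicRoulleau}.
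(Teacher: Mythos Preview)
Your argument is correct and follows the same line as the reference the paper invokes: the paper's entire proof is the citation ``By \cite[Theorem 3.12]{DzambicRoulleau} and its proof'', and what you have written is precisely an unpacking of that argument---lift to the universal cover, use the semidirect product structure of $\aut(\mathbb{H}\times\mathbb{H})$, rule out the diagonal case via injectivity and density of the projections of the irreducible lattice, and read off $\tilde\sigma^{2}=\mathrm{id}$ from the swap form. Your handling of the preliminary step (picking a lift with one-dimensional fixed locus via proper discontinuity) and the conjugation $h\tilde\sigma h^{-1}=\mu$ with $h=(\mathrm{id},a)$ are clean; nothing is missing.
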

Let us consider an involution $a\mapsto\bar{a}$ on $A$ of second
kind and a torsion-free group $\Gamma$ commensurable with a group
$\Gamma(1)$, and as above stable under $a\mapsto\overline{a}$. Recall
that for $\gamma\in\Gamma$ and $t=(z_{1},z_{2})\in\mathbb{H}\times\mathbb{H}$
the action of $\gamma$ on $\mathbb{H}\times\mathbb{H}$ is given
by 
\[
\gamma t=(\gamma z_{1},\bar{\gamma}z_{2}).
\]
 The image of $t$ on $X_{\Gamma}=\mathbb{H}\times\mathbb{H}/\Gamma$
is a fixed point of the involution $\sigma$ induced by $\mu$ if
and only if 
\[
\Gamma(z_{1},z_{2})=\Gamma(z_{2},z_{1}),
\]
 which is the case if and only if there exists $\gamma$ in $\Gamma$
such that 
\[
(z_{1},z_{2})=\gamma(z_{2},z_{1})=(\gamma z_{2},\bar{\gamma}z_{1}),
\]
 that is, if and only if $z_{1}=\gamma z_{2}$ and $z_{2}=\bar{\gamma}\gamma z_{2}$.
Since $\Gamma$ is torsion-free (because $X$ is smooth), the image
of $t$ in $X$ is a fixed point of $\sigma$ if and only if $\bar{\gamma}\gamma=1$.

\textcolor{black}{As in \cite{Granath}, for any $\beta\in A\setminus\{0\}$,
let $\Delta_{\beta}$ be the disk $\Delta_{\beta}=\{(z,\beta z)/z\in\mathbb{H}\}$}.
\textcolor{black}{We have $\lambda\Delta_{\beta}=\Delta_{\bar{\lambda}\beta\lambda^{-1}}$
for any $\lambda\in A\setminus\{0\}$.} We denote by $F_{\beta}$
the image of $\Delta_{\beta}$ in $X_{\Gamma}$.

Let $\gamma\in\Gamma$ be such that $\bar{\gamma}=\gamma^{-1}$. Since
$\bar{\gamma}\gamma=1$, we have $(\gamma z,z)=\gamma(z,\gamma z)$,
therefore for any point $t$ of $\Delta_{\gamma}$, we obtain 
\[
\mu\Gamma t=\Gamma\mu t=\Gamma\gamma t=\Gamma t,
\]
 and the image $F_{\gamma}$ of $\Delta_{\gamma}$ on $X_{\Gamma}$
is fixed by the involution $\sigma$. That implies that $F_{\gamma}$
is a smooth irreducible algebraic curve, more precisely a Shimura
curve, and that there is only a finite number of such curves. We obtain: 
\begin{cor}
The fixed point set of $\sigma$ is the union of smooth disjoint Shimura
curves $F_{\gamma}$ with $\gamma\in\Gamma$ such that $\gamma\bar{\gamma}=1$. \end{cor}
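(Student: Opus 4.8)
The running discussion already supplies most of the ingredients: for every $\gamma\in\Gamma$ with $\gamma\bar{\gamma}=1$ the curve $F_{\gamma}$ is a smooth irreducible Shimura curve contained in the fixed locus, there are only finitely many such curves, and a point of $X_{\Gamma}$ is fixed by $\sigma$ if and only if one (hence any) of its lifts $t=(z_{1},z_{2})$ satisfies $z_{1}=\rho_{1}(\gamma)z_{2}$ for some $\gamma\in\Gamma$ with $\bar{\gamma}\gamma=1$. Thus only two points remain to be made: that the fixed locus is \emph{contained} in the union of the $F_{\gamma}$ (the reverse inclusion), and that distinct $F_{\gamma}$ are pairwise \emph{disjoint} and smooth. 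The plan is to dispose of the inclusion quickly and then to concentrate on disjointness, which is where the real content lies.

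For the inclusion I would start from a fixed point, lift it to $t=(z_{1},z_{2})\in\HH\times\HH$, and use the criterion above to produce $\gamma\in\Gamma$ with $z_{1}=\rho_{1}(\gamma)z_{2}$ and $\bar{\gamma}\gamma=1$. The one place deserving care is the torsion-free step: I would check directly that the element $\gamma\bar{\gamma}\in\Gamma$ fixes the point $t$ of $\HH\times\HH$, since its action on the first factor is $\rho_{1}(\gamma\bar{\gamma})$ and on the second is $\rho_{2}(\gamma\bar{\gamma})=\rho_{1}(\tau(\gamma\bar{\gamma}))=\rho_{1}(\bar{\gamma}\gamma)$, and the relations $z_{1}=\rho_{1}(\gamma)z_{2}$, $z_{2}=\rho_{1}(\bar{\gamma})z_{1}$ force both of these to fix the respective coordinates; freeness of the $\Gamma$-action then gives $\gamma\bar{\gamma}=1$, hence $\bar{\gamma}=\gamma^{-1}$. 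Consequently $z_{2}=\rho_{1}(\gamma^{-1})z_{1}$, so $t\in\Delta_{\gamma^{-1}}$, and since $\overline{\gamma^{-1}}=\gamma$ we have $\gamma^{-1}\overline{\gamma^{-1}}=1$; thus the fixed point lies on $F_{\gamma^{-1}}$, and the fixed locus is exactly $\bigcup_{\gamma\bar{\gamma}=1}F_{\gamma}$.

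For smoothness and disjointness I would invoke Proposition \ref{pro:fixed point of dimension 1}: near any of its fixed points, $\sigma$ lifts to a conjugate of the factor-exchanging involution $\mu$ on $\HH\times\HH$, whose fixed set is the diagonal $\{(z,z)\}$, a single smooth branch. Hence the fixed locus $C$ of $\sigma$ is locally a smooth curve with exactly one branch through each of its points, so $C$ is a smooth (a priori disconnected) curve. Each $F_{\gamma}$ is a closed, irreducible, one-dimensional subset of $C$, hence an irreducible component of $C$; since on a smooth curve the irreducible components coincide with the connected components and are therefore pairwise disjoint, distinct $F_{\gamma}$ do not meet. Together with the finiteness already established, this identifies $C$ with the disjoint union of the smooth Shimura curves $F_{\gamma}$, $\gamma\bar{\gamma}=1$, as claimed.

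I expect the disjointness to be the only genuinely delicate point, and the device that renders it routine is precisely the local model: because $\sigma$ is, in suitable local analytic coordinates, the swap of the two factors, its fixed set can acquire neither a singular point nor two transverse branches, so no two of the a priori distinct curves $F_{\gamma}$ can intersect. The secondary subtlety is the torsion-free step in the inclusion, where one must verify that $\gamma\bar{\gamma}$ fixes \emph{both} coordinates of $t$ (the projection of $\Gamma$ to a single $PSL_{2}(\RR)$ factor need not be discrete, so one cannot argue with ellipticity in one factor alone) before concluding triviality.
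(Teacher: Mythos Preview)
Your proposal is correct and follows the same line as the paper, which presents the corollary as an immediate consequence of the preceding discussion rather than giving a separate proof. Your treatment is in fact more careful than the paper's on both points you flag: the paper simply asserts $\bar{\gamma}\gamma=1$ from torsion-freeness without isolating the verification that $\gamma\bar{\gamma}$ fixes the full point $t\in\HH\times\HH$, and it states smoothness and disjointness of the $F_{\gamma}$ without spelling out the local-model argument you supply.
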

\begin{rem}
Since the irreducible components of the fixed locus $C$ are smooth
disjoint Shimura curves on the Shimura surface $X$, we get by the
Hirzebruch Proportionality Theorem 
\[
2C^{2}=-K_{X}C=4(1-g)
\]
where $g>1$ is the arithmetic genus of $C$. If the irreducible componants
are $C_{j},\, j=1\cdots,k$ of genus $g_{j}$, then $C_{j}^{2}=2(1-g_{j})$
and $C^{2}=\sum C_{j}^{2}=\sum2(1-g_{j})$. Moreover 
\[
K_{X}C=\sum4(g_{j}-1)
\]
thus $C^{2}+K_{X}C=\sum2g_{i}-2=2g-2$ and $C^{2}=2-2g$.
\end{rem}
Recall that \textcolor{black}{$\lambda C_{\beta}=C_{\bar{\lambda}\beta\lambda^{-1}}$
for any $\lambda\in A\setminus\{0\}$}, and in particular for every
$\lambda\in\Gamma$ we have: 
\[
F_{1}=F_{\bar{\lambda}\lambda^{-1}}.
\]
Of course, for $\alpha=\bar{\lambda}\lambda^{-1}$ we have $\alpha\bar{\alpha}=1$.
We conjecture that for a smooth surface $X$ the fixed locus of such
$\sigma$ is irreducible. As we see immediatly, this is equivalent
to the following: 
\begin{conjecture}
\label{conjecture1}Let be $\lambda\in\Gamma$ such that $\lambda\bar{\lambda}=1$.
There exists $\gamma\in\Gamma$ such that $\lambda=\bar{\gamma}\gamma^{-1}$. 
\end{conjecture}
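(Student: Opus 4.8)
The plan is to read Conjecture~\ref{conjecture1} as a non-abelian form of Hilbert's Theorem~90 for the quadratic extension $k/\ell$, with the involution $\tau:a\mapsto\bar a$ playing the role of the generator of $\gal(k/\ell)$. Since $\tau^{2}=\mathrm{id}$, an element $\lambda$ with $\lambda\bar\lambda=1$ is exactly a $1$-cocycle of $\langle\tau\rangle$ with values in the multiplicative group carrying the $\tau$-action, and asking for $\lambda=\bar\gamma\gamma^{-1}$ is asking that this cocycle be a coboundary. Thus the statement amounts to a vanishing-of-$H^{1}$ assertion, subject to the crucial constraint that the coboundary $\gamma$ be found inside the arithmetic group $\Gamma$ and not merely in the ambient group $A^{*}$.

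First I would dispose of the ambient problem in $A^{*}$ by the classical averaging construction. Given $\lambda$ with $\lambda\bar\lambda=1$, put $\gamma=c+\bar\lambda\,\bar c$ for an auxiliary $c\in A$. Applying $\tau$ and using $\bar{\bar\lambda}=\lambda$ and $\bar{\bar c}=c$ gives $\bar\gamma=\bar c+\lambda c$, while $\lambda\gamma=\lambda c+\lambda\bar\lambda\,\bar c=\lambda c+\bar c$; hence $\bar\gamma=\lambda\gamma$, that is $\lambda=\bar\gamma\gamma^{-1}$, as soon as $\gamma\neq0$. As $A$ is a division algebra every nonzero $\gamma$ is invertible, and a generic $c$ makes $\gamma\neq0$ since the map $c\mapsto c+\bar\lambda\bar c$ is not identically zero. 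This already proves the analogue of the conjecture in $A^{*}/k^{*}$, provided the relation $\lambda\bar\lambda=1$ holds literally and not merely modulo the centre.

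The first genuine point is that $\Gamma\subset A^{+}/k^{*}$, so $\lambda$ is only given modulo $k^{*}$: a lift $\tilde\lambda$ satisfies $\tilde\lambda\bar{\tilde\lambda}=\nu\in k^{*}$. A short computation, $\bar\nu=\overline{\tilde\lambda\bar{\tilde\lambda}}=\bar{\tilde\lambda}\tilde\lambda=\nu$, shows $\nu\in\ell^{*}$, and replacing $\tilde\lambda$ by $x\tilde\lambda$ scales $\nu$ by $N_{k/\ell}(x)$; so the averaging trick applies iff the class of $\nu$ in $\ell^{*}/N_{k/\ell}(k^{*})$ is trivial. This class is precisely the image of $\lambda$ under the connecting map of $1\to k^{*}\to A^{*}\to A^{*}/k^{*}\to1$, whose target is $\ell^{*}/N_{k/\ell}(k^{*})$ and into which $H^{1}(\langle\tau\rangle,A^{*}/k^{*})$ injects because $H^{1}(\langle\tau\rangle,A^{*})=1$ by Hilbert~90 for $\mathrm{GL}_{1}$ of a central simple algebra. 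The obstruction is controlled by the reduced norm: $\mathrm{Nrd}(\nu)=\nu^{2}=N_{k/\ell}(\mathrm{Nrd}(\tilde\lambda))$ is a norm, so $[\nu]$ is $2$-torsion, and when $\lambda$ lifts to a reduced-norm-one element, as it does for $\Gamma$ inside the norm-one group $\Gamma(1)$, this forces $\nu=\pm1$; killing the sign reduces by the Hasse norm theorem to checking that $-1$ is a local norm at the ramified finite primes.

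The main obstacle is the final descent from $A^{*}$ into $\Gamma$. If $\gamma_{0}$ is one ambient solution, any other $\gamma$ satisfies $\overline{\gamma^{-1}\gamma_{0}}=\gamma^{-1}\gamma_{0}$, so $\gamma$ and $\gamma_{0}$ differ by an element of the $\tau$-fixed subalgebra $A'=\{a\in A:\bar a=a\}$, the quaternion $\ell$-algebra of Proposition~\ref{existence_type_2}; the full solution set is therefore the coset $\gamma_{0}(A')^{*}$, and the conjecture is equivalent to $\gamma_{0}(A')^{*}\cap\Gamma\neq\emptyset$, i.e.\ to $\gamma_{0}\in\Gamma\cdot(A')^{*}$. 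This is a class-number and strong-approximation condition tying the order that defines $\Gamma$ to the units of $A'$, and this is where I expect the real difficulty: in contrast with the cohomological steps above it cannot hold for every $\Gamma$ and must be settled case by case. Concretely I would choose $c\in\OO$ so that $\gamma_{0}$ is integral and then seek $\delta\in(A')^{*}$ with $\gamma_{0}\delta\in\OO^{1}$ modulo the centre, reducing matters to realizing a prescribed reduced norm by an element of $A'$ together with a local-global (class-number-one type) input; for the congruence surfaces of Section~\ref{sec:Examples} such a verification can be carried out explicitly, which is how I would confirm the statement in the $p_{g}=5$ example while leaving the general case open.
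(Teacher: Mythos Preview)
The statement you are addressing is labelled \emph{Conjecture} in the paper and is left unproven there; the authors only observe that it is equivalent to irreducibility of the fixed curve of $\sigma$, and later use this equivalence heuristically. So there is no ``paper's own proof'' to compare your proposal against.

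That said, your outline is a sound reformulation rather than a proof. The Hilbert~90 step and the averaging $\gamma=c+\bar\lambda\,\bar c$ correctly solve $\lambda=\bar\gamma\gamma^{-1}$ in $A^{*}$, and your identification of the solution set with the coset $\gamma_{0}(A')^{*}$ is exactly right; this translates the conjecture into the question $\Gamma\cap\gamma_{0}(A')^{*}\neq\emptyset$. But, as you yourself note, this final step is the entire content of the conjecture: it is an integral, arithmetic statement about the specific lattice $\Gamma$, not a cohomological one about $A^{*}$, and nothing in your argument touches it beyond naming it as a strong-approximation/class-number condition. Your plan to verify it explicitly for the $p_{g}=5$ example is reasonable and would be new, but as written the proposal stops precisely where the difficulty begins. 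One minor caution: for $\Gamma\subset\Gamma(1)=\OO^{1}/\{\pm1\}$ the lift satisfies $\tilde\lambda\bar{\tilde\lambda}=\pm1$ rather than a general $\nu\in\ell^{*}$, so your obstruction analysis simplifies, but the sign $-1$ must still be handled and your appeal to the Hasse norm theorem is the right move there.
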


\section{Quotient of a quaternionic Shimura surface by an involution of second
kind.}

\label{sec:Quotient and fundamental group}

\subsection{Invariants of the quotient}

Let $\Gamma$ be a lattice preserved by an involution of the second
kind and let $\sigma$ be the corresponding involution acting on the
Shimura surface $X=X_{\Gamma}$. Let $C$ be the smooth curve of arithmetic
genus $g$ fixed by the involution. 
\begin{prop}
\label{invariants_of_quotient} The quotient surface $Z=X/\sigma$
is smooth and has invariants:
\[
K_{Z}^{2}=e(X)+5(1-g),\, c_{2}=\frac{1}{2}e(X)+1-g,\, p_{g}=\frac{e(X)-4-4g}{8},\, q=0,
\]
 where $e(X)=c_{2}(X)$ is the topological Euler number of $X$. \\
 If $(K_{X}-C)^{2}>0$, then $Z$ has general type; this condition
on the positivity is satisfied if $e(X)\le36$.\\
 Suppose $e(X)=12$, then $C$ is irreducible of genus $g=2$ and
the quotient surface $X/\sigma$ has invariants:
\[
K_{X}^{2}=7,\, c_{2}=5,\, p_{g}=0,\, q=0.
\]
 Suppose $e(X)=16$, then $C$ is irreducible of genus $g=3$ and
the quotient surface $Z$ has invariants:
\[
K_{X}^{2}=6,\, c_{2}=6,\, p_{g}=0,\, q=0.
\]
\end{prop}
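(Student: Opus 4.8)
The plan is to realise the quotient map \(\pi\colon X\to Z\) as a double cover branched along a smooth divisor and then to feed the known invariants of \(X\) and of the fixed curve \(C\) into the standard formulas for such covers. By Proposition \ref{pro:fixed point of dimension 1} and the corollary preceding this proposition, the fixed locus \(C\) of \(\sigma\) is a disjoint union of smooth Shimura curves; hence in suitable local coordinates \(\sigma\) is the reflection \((u,v)\mapsto(u,-v)\), so the orbit space is everywhere smooth and \(\pi\) is a double cover whose branch divisor \(B=\pi(C)\) is smooth, with \(\pi|_C\colon C\xrightarrow{\sim}B\) an isomorphism. Writing \(B=2L\) in \(\mathrm{Pic}(Z)\), the double cover yields the three identities \(\pi^{*}K_Z=K_X-C\), \(\pi_{*}\OO_X=\OO_Z\oplus\OO_Z(-L)\) and \(e(X)=2e(Z)-e(C)\), which are the only structural inputs I need.

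First I would read off the numerical invariants. From \(\pi^{*}K_Z=K_X-C\) and \(\deg\pi=2\) one gets \(2K_Z^{2}=(K_X-C)^{2}=K_X^{2}-2K_XC+C^{2}\); substituting \(K_X^{2}=2e(X)\) (valid on any Shimura surface) together with \(C^{2}=2-2g\) and \(K_XC=4(g-1)\) from the remark before this proposition gives \(K_Z^{2}=e(X)+5(1-g)\). The Euler-number formula with \(e(C)=2-2g\) gives \(c_2(Z)=\tfrac12 e(X)+1-g\). The splitting \(\pi_{*}\OO_X=\OO_Z\oplus\OO_Z(-L)\) and \(q(X)=0\) force \(q(Z)=0\), and Noether's formula \(\chi(\OO_Z)=\tfrac1{12}(K_Z^{2}+c_2(Z))\) then yields \(\chi(\OO_Z)=\tfrac{e(X)}{8}+\tfrac{1-g}{2}\), whence \(p_g(Z)=\chi(\OO_Z)-1=\tfrac{e(X)-4-4g}{8}\). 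These are all routine substitutions.

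The crux is the general-type assertion. Since \(2K_Z^{2}=(K_X-C)^{2}\), the hypothesis \((K_X-C)^{2}>0\) is exactly \(K_Z^{2}>0\). I would first show that \(D:=K_X-C\) is big on \(X\): we have \(D^{2}>0\) by hypothesis, and \(D\cdot K_X=K_X^{2}-K_XC=2e(X)-4(g-1)\ge e(X)+8>0\) because \(g\le p_g(X)=e(X)/4-1\) by the second theorem of the introduction. As \(K_X\) is ample, Riemann--Roch then gives \(h^{2}(X,mD)=h^{0}(K_X-mD)=0\) for \(m\gg0\) while \(\chi(mD)\sim\tfrac{m^{2}}{2}D^{2}\to\infty\), so \(h^{0}(X,mD)\to\infty\) and \(D\) is big. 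Now \(\pi^{*}(mK_Z)=mD\), and the eigenspace decomposition of \(\pi_{*}\) gives \(H^{0}(X,mD)=H^{0}(Z,mK_Z)\oplus H^{0}(Z,mK_Z-L)\); since \(L\) is effective, \(h^{0}(Z,mK_Z-L)\le h^{0}(Z,mK_Z)\), so \(2\,h^{0}(Z,mK_Z)\ge h^{0}(X,mD)\to\infty\) and \(\kappa(Z)=2\). Finally, \(e(X)\le36\) forces positivity: from \((K_X-C)^{2}=2e(X)+10-10g\) and \(g\le e(X)/4-1\) one gets \((K_X-C)^{2}\ge 20-e(X)/2\), which is positive for \(e(X)<40\), i.e. for \(e(X)\le36\) since \(e(X)\in4\ZZ\).

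For the two special cases I would pin down \(g\) and the number of components. For \(e(X)=12\) we have \(p_g(X)=2\), so \(2\le g\le p_g(X)\) forces \(g=2\); writing \(C=\bigsqcup_{j=1}^{k}C_j\) with each \(g_j>1\) and using \(\sum g_j=g+k-1\) gives \(2k\le k+1\), hence \(k=1\), and substitution yields \(K_Z^{2}=7,\ c_2=5,\ p_g=q=0\). For \(e(X)=16\) we have \(p_g(X)=3\), the refinement \(g=p_g\) for \(p_g\le3\) (noted in the introduction) gives \(g=3\), and the same count leaves \(k\le2\); excluding two disjoint genus-\(2\) components gives irreducibility of genus \(3\) and then \(K_Z^{2}=6,\ c_2=6,\ p_g=q=0\). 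I expect the main obstacle to be precisely this general-type step—establishing that \(K_X-C\) is big and that bigness descends to the quotient—together with the exclusion of the \((2,2)\) splitting in the \(e(X)=16\) case, which is not forced by the arithmetic-genus bound alone and seems to require the finer analysis of the fixed Shimura curves rather than pure numerics.
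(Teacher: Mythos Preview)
Your invariant computations, the positivity bound for $e(X)\le 36$, and the bigness argument for $D=K_X-C$ on $X$ all match the paper's proof (the paper shows $h^0(K_X-mD)=0$ for $m\ge3$ by the same intersection calculation $K_X\cdot(mC-(m-1)K_X)<0$ against the ample $K_X$).

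There is one gap in your descent step. You assert $h^0(Z,mK_Z-L)\le h^0(Z,mK_Z)$ ``since $L$ is effective'', but $L$ is only a square root of the branch class $B$ in $\mathrm{Pic}(Z)$ and need not itself be effective. Under the splitting $H^0(X,C)=H^0(Z,L)\oplus H^0(Z,\OO_Z)$ the canonical section of $C$ is $\sigma$-anti-invariant (in the local model $(u,v)\mapsto(u,-v)$ it is $v$), hence lands in the $H^0(Z,\OO_Z)$ summand; thus $h^0(Z,L)=h^0(X,C)-1$, which one expects to vanish for a curve with $C^2<0$. The paper bypasses this entirely: it simply says ``it is enough to prove'' that $\pi^*K_Z=K_X-C$ has sections growing like $cm^2$, implicitly using the standard fact that $\kappa(Z,K_Z)=\kappa(X,\pi^*K_Z)$ for a finite surjective morphism. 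You should invoke that fact rather than try to compare the two eigenspaces directly.

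For $e(X)=16$ you can pin down $g=3$ without appealing to the introduction: $p_g(Z)=\tfrac{3-g}{2}$ must be a nonnegative integer, which already forces $g=3$ once $2\le g\le 3$. Your hesitation about excluding the splitting into two genus-$2$ components is well placed; the paper's proof does not supply an argument for irreducibility in that case either.
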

\begin{proof}
Let $\pi:X\to Z=X/\sigma$ be the quotient map. Since $K_{X}^{2}=2e$
(for $e=c_{2}(X)$), $K_{X}=\pi^{*}K_{Z}+C$ and $C^{2}=2(1-g),\, K_{X}C=4(g-1)$
(here we use that $C$ is a disjoint union of smooth Shimura curves)
we get
\[
K_{Z}^{2}=\frac{1}{2}(K_{X}-C)^{2}=\frac{1}{2}(K_{X}^{2}-2K_{X}C+C{}^{2})=e-5(g-1).
\]
 Moreover by general formulas on quotient surfaces (see e.g. \cite{DzambicRoulleau})
\[
e(Z)=\frac{1}{2}(e(X)+e(C))=\frac{1}{2}e(X)+1-g.
\]
 As $q(X)=0$, we get $q(Z)=0$ and
\[
p_{g}(Z)=\chi(\mathcal{O}_{Z})-1=\frac{e(X)-4(g+1)}{8}.
\]
 As $g\geq2$ because $X$ is hyperbolic and $p_{g}(Z)\geq0$, we
get that
\[
2\leq g\leq\frac{e(X)-4}{4},
\]
 thus
\[
e(Z)\geq\frac{e(X)}{4}+2,\,\chi(\mathcal{O}_{Z})\geq1
\]
 and $K_{Z}^{2}\geq10-\frac{e}{4}.$ Let us prove that $Z$ has general
type if $K_{Z}^{2}>0$. Since $\pi^{*}K_{Z}=K_{X}-C$, it is enough
to prove that powers $\mathcal{L}^{m}$ of $\mathcal{L}=\mathcal{O}_{X}(K_{X}-C)$
have sections growing in $c\cdot m^{2}$, where $c>0$ is a constant.
Suppose that $K_{Z}^{2}=\frac{1}{2}(K_{X}-C)^{2}>0$ (note that by
the preceding discussion, this condition is always satisfied for a
surface $X$ with $e(X)<40$). By the Riemann-Roch Theorem, we have
\[
\chi(\mathcal{L}^{m})=\frac{m^{2}}{2}(K_{X}-C)^{2}-\frac{m}{2}K_{X}(K_{X}-C)+\chi(\mathcal{O}_{X}).
\]
 Serre duality gives
\[
\chi(\mathcal{L}^{m})=H^{0}(X,\mathcal{L}^{m})-H^{1}(X,\mathcal{L}^{m})+H^{0}(X,mC-(m-1)K_{X}).
\]
 Suppose that $D=mC-(m-1)K_{X}$ is effective. As $K_{X}$ is ample
$K_{X}D>0$. But
\[
K_{X}D=m(4g-4-2e)+2e
\]
 and as $g\leq\frac{e(X)-4}{4}$, we get $4g-4\leq e-8$ and
\[
K_{X}D\leq m(-8-e)+2e<0
\]
 for $m\geq3$. Therefore $H^{0}(X,mC-(m-1)K_{X})=0$ and $Z$ has
general type. 
\end{proof}
The possibilities for values $12,16,\dots,36$ of $e(X)$ and for
the genus $g$ of $C$ are listed in the above tables:

\vspace{0.1in}

\begin{tabular}{|c|c|c|c|c|}
\hline 
$e(X)$  & $g$  & $K_{Z}^{2}$  & $c_{2}(Z)$  & $p_{g}(Z)$\tabularnewline
\hline 
$e=12$  & $2$  & $7$  & $5$  & $0$\tabularnewline
\hline 
$e=16$  & $3$  & $6$  & $6$  & $0$\tabularnewline
\hline 
$e=20$  & $2$  & $15$  & $9$  & $1$\tabularnewline
\cline{2-5} 
 & $4$  & $5$  & $7$  & $0$\tabularnewline
\hline 
$e=24$  & $3$  & $14$  & $10$  & $1$\tabularnewline
\cline{2-5} 
  & $5$  & $4$  & $8$  & $0$\tabularnewline
\hline 
$e=28$  & $2$  & $23$  & $13$  & $2$\tabularnewline
\cline{2-5} 
 & $4$  & $13$  & $11$  & $1$\tabularnewline
\hline 
\end{tabular}\hspace{0.1in}%
\begin{tabular}{|c|c|c|c|c|}
\hline 
$e(X)$  & $g$  & $K_{Z}^{2}$  & $c_{2}(Z)$  & $p_{g}(Z)$\tabularnewline
\hline 
$e=28$  & $6$  & $3$  & $9$  & $0$\tabularnewline
\hline 
 & $3$  & $22$  & $14$  & $2$\tabularnewline
\cline{2-5} 
$e=32$  & $5$  & $12$  & $12$  & $1$\tabularnewline
\cline{2-5} 
 & $7$  & $2$  & $10$  & $0$\tabularnewline
\hline 
 & $2$  & $31$  & $17$  & $3$\tabularnewline
\cline{2-5} 
 $e=36$  & $4$  & $21$  & $15$  & $2$\tabularnewline
\cline{2-5} 
 & $6$  & $11$  & $13$  & $1$\tabularnewline
\cline{2-5} 
 & $8$  & $1$  & $11$  & $0$\tabularnewline
\hline 
\end{tabular}

\subsection{The fundamental group of the quotient}

Let us recall some results about fundamental groups. Let $G$ be a
discontinuous group of homeomorphisms of a path connected, simply
connected, locally compact metric space $M$, and let $G_{tor}$ be
the normal subgroup of $G$ generated by those elements which have
fixed points, or equivalently, the torsion elements. Then 
\begin{thm}
\cite{Armstrong}. The fundamental group of the orbit space $M/G$
is isomorphic to the factor group $G/G_{tor}$. 
\end{thm}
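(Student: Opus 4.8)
The plan is to construct the isomorphism explicitly rather than to invoke covering space theory directly, since the quotient map $p\colon M\to M/G$ fails to be a covering map exactly at the fixed points, which is the whole source of difficulty. Fix a base point $x_0\in M$ and write $\bar x_0=p(x_0)$. First I would build a homomorphism
\[
\Theta\colon G/G_{tor}\longrightarrow\pi_1(M/G,\bar x_0)
\]
as follows: for $g\in G$ choose a path $\delta_g$ in $M$ from $x_0$ to $gx_0$ (possible since $M$ is path connected) and set $\Theta(gG_{tor})=[p\circ\delta_g]$. This is well defined, because two choices of $\delta_g$ differ by a loop in $M$, which is null-homotopic as $M$ is simply connected, so their images in $M/G$ are homotopic loops. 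It is a homomorphism because a path for $gh$ can be taken to be $\delta_g$ followed by $g\cdot\delta_h$, and $p(g\cdot\delta_h)=p(\delta_h)$.

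Next I would check that $G_{tor}$ lies in the kernel, so that $\Theta$ descends to $G/G_{tor}$. It suffices to treat a generator, that is, an element $g$ with a fixed point $y\in M$. Choosing a path $\epsilon$ from $x_0$ to $y$, the concatenation $\delta_g=\epsilon\cdot\overline{g\cdot\epsilon}$ runs from $x_0$ to $y=gy$ and then to $gx_0$; since $g\cdot\epsilon$ and $\epsilon$ lie pointwise in the same $G$-orbit we have $p(g\cdot\epsilon)=p(\epsilon)$, so its image $p\circ\delta_g=p(\epsilon)\cdot\overline{p(\epsilon)}$ is null-homotopic. Hence every fixed-point element, and thus all of $G_{tor}$, maps to the identity.

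It then remains to prove that the induced map $G/G_{tor}\to\pi_1(M/G)$ is bijective. For surjectivity I would lift an arbitrary loop $\bar\gamma$ based at $\bar x_0$ to a path $\gamma$ in $M$ starting at $x_0$; its endpoint is $gx_0$ for some $g\in G$, and then $[\bar\gamma]=\Theta(gG_{tor})$ by the well-definedness above. For injectivity I would suppose $p\circ\delta_g$ is null-homotopic in $M/G$ and deduce $g\in G_{tor}$, by lifting the null-homotopy to $M$ and reading off that the endpoint discrepancy $g$ is a product of elements, each of which fixes a point encountered along the way.

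The main obstacle, and the only substantial one, is precisely this path- and homotopy-lifting for the map $p$, which is not a covering map. The mechanism that makes it work is the local structure of a discontinuous action: since $M$ is locally compact metric and the action discontinuous, every stabilizer $G_x$ is finite and $p$ is locally modeled on the quotient $U\to U/G_x$ of a slice $U$ by a finite group. Through such finite quotients paths and homotopies do lift, albeit non-uniquely, and any ambiguity in a lift is realized by an element fixing a point of the slice, hence by an element of $G_{tor}$. Carrying out this local analysis and patching the local lifts by a compactness argument is where the real work lies; granting it, the two maps above are mutually inverse and the theorem follows. This is in essence the content of Armstrong's argument, which we cite.
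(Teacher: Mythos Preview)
The paper does not prove this theorem; it is stated with a citation to Armstrong and then applied as a black box. So there is no ``paper's own proof'' to compare against. Your sketch is a reasonable outline of Armstrong's actual argument: construct the map $G\to\pi_1(M/G)$ via projected paths, kill elements with fixed points by routing the path through such a point, and obtain bijectivity from path- and homotopy-lifting through the local finite-quotient structure. You correctly flag that the lifting through non-free orbits is where the real work lies and that the ambiguity in lifts is absorbed by $G_{tor}$; this is precisely the content of Armstrong's paper, and you already acknowledge as much in your final sentence. In the context of the present paper, simply citing the result, as the authors do, is the appropriate level of detail.
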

Let $X=X_{\Gamma}$ be our Shimura surface with fundamental group
$\Gamma$ such that the involution $\mu$ switching the two factors
of the bi-disk $\mathbb{H}\times\mathbb{H}$ acts on $X$ by an involution
denoted by $\sigma$. The fundamental group of the quotient surface
$X/\sigma$ is isomorphic to $\Gamma'/\Gamma'_{tor}$, where $\Gamma'$
is the group generated by $\Gamma$ and $\mu$.

Recall that a group $G$ acting on the space $M$ is discontinuous
if: \\
 (1) the stabilizer of each point of $M$ is finite, and\\
 (2) each point $x\in M$ has a neighborhood $U$ such that any element
of $G$ not in the stabilizer of $x$ maps $U$ outside itself (i.e.
if for $g\in G$, $gx\not=x$ then $U\cap gU$ is empty). 
\begin{lem}
The group $\Gamma'$ is discontinuous. \end{lem}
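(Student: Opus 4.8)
The plan is to exploit that $\Gamma$ is already known to act properly discontinuously and, being torsion-free since $X$ is smooth, freely, and that $\Gamma'$ is only an index-two extension of $\Gamma$. First I would pin down the structure of $\Gamma'$. Computing with the actions $\gamma(z_{1},z_{2})=(\gamma z_{1},\bar{\gamma}z_{2})$ and $\mu(z_{1},z_{2})=(z_{2},z_{1})$, one finds that $\mu\gamma\mu^{-1}$ sends $(z_{1},z_{2})$ to $(\bar{\gamma}z_{1},\gamma z_{2})$, which is exactly the action of $\bar{\gamma}=\tau(\gamma)$ (using $\tau^{2}=\mathrm{id}$). Since $\Gamma$ is $\tau$-stable, $\mu$ normalizes $\Gamma$; hence $\Gamma'=\Gamma\sqcup\Gamma\mu$ with $[\Gamma':\Gamma]=2$. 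Every element of $\Gamma'$ is thus either factor-preserving ($\gamma\in\Gamma$) or factor-swapping ($\gamma\mu$), and a short computation gives $(\gamma\mu)^{2}=\gamma\bar{\gamma}\in\Gamma$.

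For condition (1), I would fix a point $p\in\HH\times\HH$. No nontrivial $\gamma\in\Gamma$ fixes $p$, because $\Gamma$ acts freely. If a swapping element $\gamma\mu$ fixes $p$, then $(\gamma\mu)^{2}=\gamma\bar{\gamma}\in\Gamma$ also fixes $p$, forcing $\gamma\bar{\gamma}=1$, so that $\gamma\mu$ is an involution; and if two swapping elements $\gamma\mu$ and $\gamma'\mu$ both fix $p$, their product $\gamma\bar{\gamma'}\in\Gamma$ fixes $p$, which forces $\gamma=\gamma'$. Consequently the stabilizer of $p$ in $\Gamma'$ has order at most $2$, giving condition (1).

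For condition (2), I would deduce from $[\Gamma':\Gamma]=2$ that $\Gamma'$ inherits proper discontinuity from $\Gamma$. Taking a compact neighborhood $K$ of $x$ and setting $K'=K\cup\mu K$, any swapping element with $\gamma\mu K\cap K\neq\emptyset$ satisfies $\gamma K'\cap K'\neq\emptyset$, so by proper discontinuity of $\Gamma$ only finitely many $g\in\Gamma'$ satisfy $gK\cap K\neq\emptyset$. For each such $g$ with $gx\neq x$, Hausdorffness and continuity of the action provide a neighborhood $U_{g}\subseteq K$ with $gU_{g}\cap U_{g}=\emptyset$; intersecting the finitely many $U_{g}$ yields a neighborhood $U$ with $gU\cap U=\emptyset$ for every $g$ with $gx\neq x$, while any $g$ with $gK\cap K=\emptyset$ trivially satisfies $gU\cap U=\emptyset$. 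This is condition (2).

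The one genuinely delicate point is condition (1): unlike $\Gamma$, the group $\Gamma'$ is \emph{not} torsion-free, since each second-kind involution $\gamma\mu$ is a torsion element with a one-dimensional fixed locus (a Shimura curve). The real content of the lemma is therefore that these torsion elements cannot accumulate at any point, and the freeness of $\Gamma$ is precisely the lever that bounds each stabilizer by $2$. By contrast, the finite-index transfer of proper discontinuity needed for condition (2) is routine.
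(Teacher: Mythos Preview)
Your proof is correct, but it takes a considerably more hands-on route than the paper. The paper dispatches the lemma in two sentences: since $\HH\times\HH$ is locally compact Hausdorff, a subgroup of $\aut(\HH\times\HH)$ acts discontinuously if and only if it is discrete, and $\Gamma'$ is discrete because it contains the discrete group $\Gamma$ with index $2$. You instead verify Armstrong's two conditions directly, computing the coset structure $\Gamma'=\Gamma\sqcup\Gamma\mu$, bounding stabilizers by $2$ via the relation $(\gamma\mu)^{2}=\gamma\bar{\gamma}$ and freeness of $\Gamma$, and then running the standard compact-neighborhood argument to transfer proper discontinuity from $\Gamma$ to $\Gamma'$. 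Your approach has the virtue of being self-contained and of making explicit the stabilizer structure, which the paper in fact uses immediately afterward to analyze the torsion in $\Gamma'$; the paper's approach buys brevity by invoking the general equivalence between discreteness and discontinuity. One small remark: your assessment that condition~(1) is ``the one genuinely delicate point'' somewhat overstates matters---from the paper's viewpoint, both conditions follow at once from discreteness, and neither is delicate. Also, in your uniqueness step, the product $(\gamma\mu)(\gamma'\mu)=\gamma\bar{\gamma'}=1$ gives $\gamma=\gamma'$ only after combining with $\bar{\gamma'}=(\gamma')^{-1}$; it is cleaner to take $(\gamma\mu)(\gamma'\mu)^{-1}=\gamma(\gamma')^{-1}$ directly.
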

\begin{proof}
Since $M=\mathbb{H}\times\mathbb{H}$ is a locally compact Hausdorff
space, $\Gamma'$ is discontinuous if and only if it is discrete subgroup
of $Aut(\mathbb{H}\times\mathbb{H})$. The latter assertion follows
from the fact that $\Gamma'$ is an index-2 extension of the discrete
group $\Gamma$. 
\end{proof}
For any $\gamma\in\Gamma$, we have $\mu\gamma=\bar{\gamma}\mu.$
Let $g\in\Gamma'_{tor}$ be a non-trivial torsion element. Since $\Gamma$
is torsion free, $g\not\in\Gamma$ and there exists $\lambda\in\Gamma$
such that $g=\lambda\mu$. The order of $g$ is then divisible by
$2$ and we have $g^{2n}=(\lambda\bar{\lambda})^{n}$. Since $\Gamma$
is torsion free, $g^{2n}=1$ if and only if $\lambda\bar{\lambda}=1$.
Therefore a torsion element $g$ of $\Gamma'$ has order $2$ and
there exists $\lambda\in\Gamma$ such that $g=\lambda\mu$ and $\lambda\bar{\lambda}=1$.

As an immediate consequence we obtain: 
\begin{lem}
The fundamental group $\Gamma'/\Gamma'_{tor}$ of $X/\sigma$ is isomorphic
to the group $\Gamma/N$ where $N$ is the normal group generated
by the $\lambda\in\Gamma$ such that $\lambda\bar{\lambda}=1$. 
\end{lem}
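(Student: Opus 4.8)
The plan is to exhibit an explicit isomorphism rather than to analyze $\Gamma'_{tor}$ abstractly, using only the facts just established: that $\mu\gamma=\bar\gamma\mu$ for all $\gamma\in\Gamma$, that $[\Gamma':\Gamma]=2$, and that every nontrivial torsion element of $\Gamma'$ has order $2$ and is of the form $\lambda\mu$ with $\lambda\in\Gamma$ and $\lambda\bar\lambda=1$ (in particular $\mu$ itself, arising from $\lambda=1$, is such an element). Write $S=\{\lambda\in\Gamma\mid\lambda\bar\lambda=1\}$, so that $N$ is the normal closure of $S$ in $\Gamma$. Since $\Gamma\triangleleft\Gamma'$ with $\mu^2=1$ and $\mu\gamma\mu^{-1}=\bar\gamma$, the group $\Gamma'$ is the internal semidirect product of $\Gamma$ with $\langle\mu\rangle$, and I will use this presentation to build the map.

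The first step is to record that $\tau\colon a\mapsto\bar a$ restricts to a group automorphism of $\Gamma$ (it is multiplicative, of order dividing $2$, and preserves $\Gamma$) which fixes $S$ setwise, since $\lambda\bar\lambda=1$ is equivalent to $\bar\lambda\lambda=1$; hence $\tau(N)=N$ and $\tau$ descends to an automorphism of $\Gamma/N$. The key observation is that this induced automorphism is trivial: for every $\gamma\in\Gamma$ the element $\bar\gamma\gamma^{-1}$ satisfies $(\bar\gamma\gamma^{-1})\overline{(\bar\gamma\gamma^{-1})}=\bar\gamma\gamma^{-1}\gamma\bar\gamma^{-1}=1$, so $\bar\gamma\gamma^{-1}\in S\subseteq N$ and therefore $\bar\gamma\equiv\gamma\pmod N$.

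Using this I would define $\Phi\colon\Gamma'\to\Gamma/N$ by $\gamma\mapsto\gamma N$ for $\gamma\in\Gamma$ and $\mu\mapsto 1$. By the universal property of the semidirect product it suffices to check that the defining relations are respected: $\mu^2=1\mapsto 1$, and the compatibility of the conjugation action requires $\Phi(\bar\gamma)=\Phi(\mu\gamma\mu^{-1})=\Phi(\gamma)$, i.e.\ $\bar\gamma N=\gamma N$, which is exactly the observation above. Thus $\Phi$ is a well-defined surjective homomorphism, and an element $\gamma\mu^{\epsilon}$ with $\gamma\in\Gamma$, $\epsilon\in\{0,1\}$, lies in $\ker\Phi$ precisely when $\gamma\in N$; hence $\ker\Phi=\langle N,\mu\rangle$.

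It then remains to identify $\ker\Phi$ with $\Gamma'_{tor}$. Since $\ker\Phi$ is normal in $\Gamma'$ and contains every torsion generator $\lambda\mu$ (because $\lambda\in S\subseteq N$ gives $\Phi(\lambda\mu)=\lambda N=N$), it contains the normal closure of the torsion elements, namely $\Gamma'_{tor}$. Conversely $\mu\in\Gamma'_{tor}$, and each $\lambda\in S$ lies in $\Gamma'_{tor}$ since $\lambda=(\lambda\mu)\mu$ is a product of two torsion elements; as $\Gamma'_{tor}\cap\Gamma$ is normal in $\Gamma$ and contains $S$, it contains $N$, whence $\langle N,\mu\rangle\subseteq\Gamma'_{tor}$. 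Therefore $\ker\Phi=\Gamma'_{tor}$ and $\Gamma'/\Gamma'_{tor}\cong\Gamma/N$. I expect this last identification to be the only delicate point: one must invoke the complete description of the torsion elements so that nothing in $\Gamma'_{tor}$ escapes $\langle N,\mu\rangle$, and the normality of $\Gamma'_{tor}\cap\Gamma$ in $\Gamma$ to pass from the set $S$ to its normal closure $N$; the remainder is formal.
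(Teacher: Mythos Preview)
Your argument is correct and is essentially the natural fleshing-out of what the paper leaves implicit: the paper records the relation $\mu\gamma=\bar\gamma\mu$, classifies the torsion of $\Gamma'$ as the elements $\lambda\mu$ with $\lambda\bar\lambda=1$, and then states the lemma as ``an immediate consequence'' without further detail. Your explicit construction of $\Phi$ via the semidirect product, together with the two inclusions $\Gamma'_{tor}\subseteq\ker\Phi$ and $\langle N,\mu\rangle\subseteq\Gamma'_{tor}$, is exactly the computation one has to do to justify that sentence, and every step checks out (in particular the key point $\bar\gamma\gamma^{-1}\in S$ making $\Phi$ well defined, and the observation $\lambda=(\lambda\mu)\mu\in\Gamma'_{tor}$ for $\lambda\in S$).
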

Since for any $\gamma\in\Gamma$, the group $\Gamma'_{tor}$ contains
$\bar{\gamma}\mu\bar{\gamma}^{-1}=\bar{\gamma}\gamma^{-1}\mu$, we
see that $N$ contains $\bar{\gamma}\gamma^{-1}$ for every $\gamma\in\Gamma$,
therefore the quotient $\Gamma/N$ forces the relation $\gamma=\bar{\gamma}$
for any $\gamma\in\Gamma$.

Let us denote by $H$ the normal subgroup of $\Gamma$ generated by
the elements $\bar{\gamma}\gamma^{-1}$, $\gamma\in\Gamma$. Note
that under Conjecture \ref{conjecture1}, the group $H$ is equal
to $N$. The group $\pi_{1}(X/\sigma)=\Gamma'/\Gamma'_{tor}\simeq\Gamma/N$
is a quotient of $\Gamma/H$.
\begin{thm}
Suppose that $\Gamma$ is a subgroup of $\Gamma(1)$. Then $\Gamma/H$
is a finite group and the fundamental group of $X/\sigma$ is finite.\end{thm}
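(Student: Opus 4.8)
The plan is to reduce everything to Margulis' Normal Subgroup Theorem. As already observed, $\pi_{1}(X/\sigma)\cong\Gamma'/\Gamma'_{tor}\simeq\Gamma/N$ is a quotient of $\Gamma/H$, so it suffices to prove that $\Gamma/H$ is finite, where $H$ is the normal subgroup of $\Gamma$ generated by the elements $\bar{\gamma}\gamma^{-1}$, $\gamma\in\Gamma$. First I would record the relevant structural facts about $\Gamma$. Since $A$ is a division algebra, $\Gamma(1)=\mathcal{O}^{1}/\{\pm1\}$ is a cocompact lattice, and because $A$ comes from a field $k$ with exactly two unramified infinite places, its image in $G=\psl_{2}(\RR)\times\psl_{2}(\RR)$ is an irreducible lattice (the compact factors $\mathbf{H}^{*}$ play no role). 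As $\Gamma\subseteq\Gamma(1)$ is again a lattice it has finite index in $\Gamma(1)$; hence $\Gamma$ is a cocompact, irreducible, torsion-free (because $X$ is smooth) lattice in the semisimple Lie group $G$, which has real rank $2$.

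Next I would invoke the dichotomy. The subgroup $H$ is normal in $\Gamma$ by construction, so Margulis' Normal Subgroup Theorem, applied to the irreducible lattice $\Gamma$ in the higher-rank group $G$, asserts that $H$ is either finite (and central) or of finite index in $\Gamma$. Since $\Gamma$ is torsion-free and sits inside the center-free group $G$, a finite normal subgroup must be trivial, so the dichotomy sharpens to: either $H=\{1\}$ or $[\Gamma:H]<\infty$.

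It then remains to exclude the case $H=\{1\}$. If $H$ were trivial, then $\bar{\gamma}\gamma^{-1}=1$, i.e.\ $\bar{\gamma}=\gamma$ in $A^{+}/k^{*}$, for every $\gamma\in\Gamma$. Since $\rho_{2}=\rho_{1}\circ\tau$ and scalars are killed in $\psl_{2}(\RR)$, this forces $\rho_{2}(\gamma)=\rho_{1}(\bar{\gamma})=\rho_{1}(\gamma)$ for all $\gamma$, so the faithful embedding $\Gamma\hookrightarrow\psl_{2}(\RR)\times\psl_{2}(\RR)$ would have image inside the diagonal $\Delta\cong\psl_{2}(\RR)$. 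A discrete subgroup of $\Delta$ has infinite covolume in $G$, contradicting the cocompactness (equivalently the irreducibility) of $\Gamma$. Hence $H\neq\{1\}$, so $H$ has finite index, $\Gamma/H$ is finite, and therefore its quotient $\pi_{1}(X/\sigma)\simeq\Gamma/N$ is finite as well.

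I expect the main obstacle to lie in the careful verification of the hypotheses of the Normal Subgroup Theorem, rather than in the final geometric argument. Specifically, one must check that $\Gamma$ really is an irreducible lattice and that the theorem is available in the present setting, where $G$ is a product of two rank-one factors and neither factor has Kazhdan's property (T); this is exactly the case covered by Margulis' theorem (and, for general products, by Bader--Shalom). Once irreducibility is secured, ruling out $H=\{1\}$ through the diagonal argument is routine, and no explicit computation with the arithmetic of $A$ or $A'$ is needed.
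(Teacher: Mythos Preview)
Your argument via Margulis' Normal Subgroup Theorem is correct, but it follows a genuinely different route from the paper's. The paper proceeds by an explicit arithmetic construction: it writes $A=A'\otimes_{\ell}k$ for a quaternion algebra $A'$ over the fixed field $\ell$, splits $A'$ over a quadratic extension $k'/\ell$, and thereby obtains an embedding $j:A\hookrightarrow M_{2}(K)$ for the compositum $K=k\otimes_{\ell}k'$ which is equivariant for the involution (the latter acting entrywise on $M_{2}(K)$ and fixing $M_{2}(k')$). This puts $\Gamma\subset\Gamma(1)$ inside $PSL_{2}(\mathcal{O}_{K})$, and reducing modulo the nonzero ideal $I\subset\mathcal{O}_{K}$ generated by all $\bar a-a$ kills $H$ and lands $\Gamma/H$ inside the finite group $PSL_{2}(\mathcal{O}_{K}/I)$.

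Your approach trades this hands-on construction for the deep machinery of the Normal Subgroup Theorem, which is available here because $\Gamma$ is an irreducible cocompact lattice in the rank-two group $PSL_{2}(\RR)\times PSL_{2}(\RR)$; the diagonal argument ruling out $H=\{1\}$ is then the only additional input. The payoff of your route is conceptual cleanliness and no need to build the splitting field $K$; the paper's argument, by contrast, is elementary (it avoids NST entirely), stays within the arithmetic of $A$ and $A'$, and in principle exhibits an explicit finite group containing $\Gamma/H$.
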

\begin{proof}
Let $A'$ be a quaternion algebra over the field $\ell$ as above
such that $A=A'\otimes k$ and the involution of second kind on $A$
is given by $a'\otimes u\to a'\otimes\bar{u}$. Let $k'$ be a degree
$2$ extension of $\ell$ such that $A'\otimes_{\ell}k'=M_{2}(k')$
and let $K$ be the compositum of $k,k'$ : $K=k\otimes_{\ell}k'$.
Then the algebra $A\otimes_{k}K$ is $A'\otimes_{\ell}K=M_{2}(k')\otimes_{\ell}k=M_{2}(K)$.
The involution of second kind $a\to\bar{a}$ extends to $M_{2}(K)$
and acts on each entries fixing $M_{2}(k')\subset M_{2}(K)$. The
embedding 
\[
j:A\hookrightarrow M_{2}(K)
\]
is equivariant for the action of the involution: $\forall a\in A$,
$j(\bar{a})=\overline{j(a)}$, where the action on the left hand side
is the conjugation on each entries of the matrix. 

The group $j(\Gamma(1))$ is a subgroup of $PSL_{2}(\mathcal{O}_{K})$.
Let $I\subset\mathcal{O}_{K}$ be the (non-trivial) ideal generated
by the elements $\bar{a}-a,\, a\in\mathcal{O}_{K}$. The ring $O_{K}/I$
is a finite ring therefore the subgroup $\Gamma/H$ of $PSL_{2}(\mathcal{O}_{K}/I)$
is finite.

The fundamental group of $X/\sigma$ is (isomorphic to) $\Gamma/N$
which is a quotient of the finite group $\Gamma/H$, therefore $\pi_{1}(X/\sigma)$
is finite. 
\end{proof}

\section{Examples}

\label{sec:Examples}

\subsection{Aim and terminology}

Our goal is to find examples of smooth quaternionic Shimura surfaces
$X_{\Gamma}$ together with an involution $\sigma$ on $X_{\Gamma}$
having one-dimensional fixed locus. So, we consider an indefinite
quaternion algebra $A$ over a totally real field $k$ of degree $n=[k:\QQ]$,
unramified exactly at two infinite places of $k$ and consider groups
$\Gamma$ commensurable with $\Gamma_{\OO}(1)$, the group of norm-1
elements of a maximal order $\OO\subset A$ (modulo center). 
\begin{defn}
Let $k,A,\OO$ be as above. We say that a discrete group $\Gamma$
in the commensurability class of $\Gamma_{\OO}(1)$ is \emph{admissible
of type $e$} if: 
\begin{enumerate}
\item $\Gamma$ is torsion-free. 
\item $e(X_{\Gamma})=e$ where $e(X_{\Gamma})$ is denoting the (orbifold-)
Euler number. 
\item On $A$ there exists an involution $\tau$ of second kind such that
$\Gamma$ is invariant under $\tau$. 
\end{enumerate}
\end{defn}
Let us remark that according to Proposition \ref{invariants_of_quotient},
the quotient $X_{\Gamma}/\sigma$ will be of general type if $e(X_{\Gamma})\leq36$.
Hence, we will focus on admissible groups of type $e=12,16,20,...,36$.
Also, the proposition \ref{existence_type_2} gives us a condition
which guarantees the existence of an involution of second kind on
$A$.

\subsection{Smoothness and the Euler number\label{sub:Smoothness-and-the-Euler}}

Let $A=A(k,\mathfrak{p}_{1},...,\mathfrak{p}_{2m})$ be as above and
assume that there exists a $k/\ell$-involution on $A$ with respect
to a subfield $\ell\subset k$. According to Proposition \ref{existence_type_2}
we particularly assume that the primes $\mathfrak{p}_{i},\, i=1,\dots,2m$
in $\mathcal{O}_{k}$ come in pairs: there exist primes $\mathcal{P}_{1}$,...,$\mathcal{P}_{m}$
of $\mathcal{O}_{\ell}$ such that $\mathfrak{p}_{1}\mathfrak{p}_{2}=(\mathcal{P}_{1})$,...,$\mathfrak{p}_{2m-1}\mathfrak{p}_{2m}=(\mathcal{P}_{m})$.

If $\Gamma$ is commensurable with $\Gamma_{\OO}(1)$, we have the
following general formula for the orbifold Euler number of the Shimura
surface $X_{\Gamma}$ \textcolor{red}{}
\begin{prop}
(see \cite{shimizu}, \cite{vign}) \label{shimizu} Let $k$ and
$A=A(k,\mathfrak{p}_{1},...,\mathfrak{p}_{2m})$ be as above. Assume
that there exists a $k/\ell$-involution on $A$. Let $n=[k:\QQ]$,
$\zeta_{k}(\ )$ be the Dedekind zeta function of $k$ and $d_{k}$
denote the discriminant of $k$. Then the orbifold Euler number of
$X_{\Gamma}$ equals
\[
e(X_{\Gamma})=[\Gamma_{\OO}(1):\Gamma]\cdot\frac{d_{k}^{3/2}\zeta_{k}(2)}{2^{2n-3}\pi^{2n}}\prod_{i=1}^{m}(N\mathcal{P}_{i}-1)^{2},
\]
 where $N\mathcal{P}=|\mathcal{\OO}_{\ell}/\mathcal{P}|$ denotes
the norm of $\mathcal{P}$ and where
\[
[\Gamma_{\OO}(1):\Gamma]=\frac{[\Gamma_{\OO}(1):\Gamma\cap\Gamma_{\OO}(1)]}{[\Gamma:\Gamma\cap\Gamma_{\OO}(1)]}
\]
 is the generalized index of two commensurable groups. 
\end{prop}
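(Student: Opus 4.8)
The plan is to read the orbifold Euler number as a normalized volume, reduce to the single arithmetic group $\Gamma_{\OO}(1)=\OO^{1}/\{\pm1\}$, and then invoke the covolume computation of Shimizu and Vign\'eras. First I would exploit that the orbifold Euler number is multiplicative along the commensurability class. For torsion-free subgroups of finite index the ordinary Euler number multiplies by the index, and the generalized index
\[
[\Gamma_{\OO}(1):\Gamma]=\frac{[\Gamma_{\OO}(1):\Gamma\cap\Gamma_{\OO}(1)]}{[\Gamma:\Gamma\cap\Gamma_{\OO}(1)]}
\]
extends this multiplicativity to any $\Gamma$ commensurable with $\Gamma_{\OO}(1)$. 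Hence it suffices to establish the formula for $\Gamma_{\OO}(1)$ itself, i.e.\ with the index factor equal to $1$.

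Next I would convert $e(X_{\Gamma_{\OO}(1)})$ into a Riemannian volume. Equipping each factor of $\HH\times\HH$ with the Poincar\'e metric of constant curvature $-1$, the Gauss--Bonnet--Chern form of the resulting locally symmetric metric on the smooth compact quotient equals $\frac{1}{(2\pi)^{2}}$ times the volume form (Hirzebruch proportionality, since on one factor Gauss--Bonnet gives $\chi=-\tfrac{1}{2\pi}\,\mathrm{Area}$ and the top Pfaffian on the bidisc is the product of the two). Thus
\[
e(X_{\Gamma_{\OO}(1)})=\frac{1}{(2\pi)^{2}}\,\mathrm{vol}\big(\HH\times\HH/\Gamma_{\OO}(1)\big),
\]
which reduces the problem to the covolume of $\Gamma_{\OO}(1)$ in $SL_{2}(\RR)^{2}$.

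I would then invoke the covolume formula of Shimizu, in the form presented by Vign\'eras. Writing the reduced discriminant of $A$ as the product of the ramified finite primes $\pP_{1},\dots,\pP_{2m}$, their computation of the Tamagawa measure of $\OO^{1}$ produces a covolume whose only transcendental ingredient is $\zeta_{k}(2)$, together with the arithmetic factor $d_{k}^{3/2}$ and one local factor $(N\pP_{j}-1)$ at each ramified finite prime; absorbing the $(2\pi)^{2}$ of Gauss--Bonnet and collecting all powers of $2$ and $\pi$ yields
\[
e(X_{\Gamma_{\OO}(1)})=\frac{d_{k}^{3/2}\zeta_{k}(2)}{2^{2n-3}\pi^{2n}}\prod_{j=1}^{2m}(N\pP_{j}-1).
\]

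Finally I would rewrite the ramification product using the $k/\ell$-structure. By Proposition~\ref{existence_type_2}, the existence of the involution of second kind forces the ramified finite primes to pair as $\pP_{2i-1}\pP_{2i}=(\mathcal{P}_{i})\OO_{k}$ with $\mathcal{P}_{i}$ split in $k/\ell$; the two primes above $\mathcal{P}_{i}$ then have residue degree $1$ over $\ell$, so $N\pP_{2i-1}=N\pP_{2i}=N\mathcal{P}_{i}$ and consequently $\prod_{j=1}^{2m}(N\pP_{j}-1)=\prod_{i=1}^{m}(N\mathcal{P}_{i}-1)^{2}$. Reinstating the generalized index from the first step gives the stated formula. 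The main obstacle is the third step: pinning down the precise constant $d_{k}^{3/2}/(2^{2n-3}\pi^{2n})$ demands careful bookkeeping of the local Tamagawa volume factors and an application of the functional equation relating $\zeta_{k}(2)$ to $\zeta_{k}(-1)$ (the shape in which Harder's Gauss--Bonnet naturally delivers it), so the substance of the argument lies in matching normalizations with \cite{shimizu} and \cite{vign} rather than in a fresh derivation.
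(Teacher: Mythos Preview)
The paper does not give its own proof of this proposition; it is stated as a known result with references to Shimizu and Vign\'eras, merely specialized to the present setting where the existence of a $k/\ell$-involution forces the ramified finite primes to pair up over split primes $\mathcal{P}_{i}$ of $\ell$, so that $\prod_{j}(N\pP_{j}-1)$ becomes $\prod_{i}(N\mathcal{P}_{i}-1)^{2}$. Your outline---reduce to $\Gamma_{\OO}(1)$ by multiplicativity of the orbifold Euler number under the generalized index, convert to a covolume via Gauss--Bonnet on the bidisc, invoke the Shimizu--Vign\'eras covolume formula, and then pair the ramified primes using Proposition~\ref{existence_type_2}---is exactly how one would derive the stated formula from those references, and you correctly flag that the only delicate point is bookkeeping the normalization constants (equivalently, passing between $\zeta_{k}(2)$ and $\zeta_{k}(-1)$ via the functional equation). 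There is nothing further to compare, since the paper treats the result as a citation rather than supplying an argument.
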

When $k=\QQ(\sqrt{d})$ is a real quadratic field, we have a particularly
handable formula $\zeta_{k}(2)=\frac{\pi^{4}B_{2,k}}{6d_{k}^{3/2}}$
for the value $\zeta_{k}(2)$ in terms of the second generalized Bernoulli
number. This implies 
\begin{cor}
(see \cite{Shavel78}) Let $k=\QQ(\sqrt{d})$ be a real quadratic
field. We denote by $B_{2,k}$ the second generalized Bernoulli number
associated with the quadratic Dirichlet character $\chi_{k}(p)=\left(\frac{d_{k}}{p}\right)$
of $k$. Then,
\[
e(X_{\Gamma})=[\Gamma_{\OO}(1):\Gamma]\cdot\frac{B_{2,k}}{12}\prod_{i=1}^{m}(p_{i}-1)^{2},
\]
 where $p_{1},\ldots,p_{m}$ are rational primes such that $\mathfrak{p}_{1}\mathfrak{p}_{2}=(p_{1})$,...,$\mathfrak{p}_{2m-1}\mathfrak{p}_{2m}=(p_{m})$. 
\end{cor}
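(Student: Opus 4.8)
The plan is to obtain the stated identity as a direct specialization of Proposition \ref{shimizu} to the real quadratic case $n=[k:\QQ]=2$, combined with the closed evaluation of $\zeta_k(2)$ recalled just before the statement. First I would set $n=2$ in the general Euler number formula. Then the numerical denominator becomes $2^{2n-3}\pi^{2n}=2^{1}\pi^{4}=2\pi^{4}$, so that the analytic prefactor reduces to
\[
\frac{d_{k}^{3/2}\zeta_{k}(2)}{2^{2n-3}\pi^{2n}}=\frac{d_{k}^{3/2}\zeta_{k}(2)}{2\pi^{4}}.
\]

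Next I would substitute the special value $\zeta_{k}(2)=\frac{\pi^{4}B_{2,k}}{6d_{k}^{3/2}}$ stated above. This value itself rests on the factorization $\zeta_{k}(s)=\zeta(s)L(s,\chi_{k})$ together with the classical evaluation of $L(2,\chi_{k})$ for the even quadratic character $\chi_{k}$ in terms of the generalized Bernoulli number $B_{2,k}$ (the Gauss sum of $\chi_{k}$ being $\sqrt{d_{k}}$); I would cite it rather than re-derive it. Inserting this expression, the factor $d_{k}^{3/2}$ cancels against the one in the numerator and the powers of $\pi$ cancel as well, leaving exactly
\[
\frac{d_{k}^{3/2}}{2\pi^{4}}\cdot\frac{\pi^{4}B_{2,k}}{6d_{k}^{3/2}}=\frac{B_{2,k}}{12}
\]
for the prefactor.

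It then remains to identify the product over the ramified primes. Since $k$ is quadratic over $\QQ$ and the fixed field $\ell$ of the given involution of second kind satisfies $[k:\ell]=2$, we must have $\ell=\QQ$ and hence $\OO_{\ell}=\ZZ$. Consequently each prime $\mathcal{P}_{i}$ of $\OO_{\ell}$ appearing in Proposition \ref{existence_type_2} is generated by a rational prime, $\mathcal{P}_{i}=(p_{i})$ with $\mathfrak{p}_{2i-1}\mathfrak{p}_{2i}=(p_{i})$, and its norm is $N\mathcal{P}_{i}=|\ZZ/(p_{i})|=p_{i}$. Therefore $(N\mathcal{P}_{i}-1)^{2}=(p_{i}-1)^{2}$, so that $\prod_{i=1}^{m}(N\mathcal{P}_{i}-1)^{2}=\prod_{i=1}^{m}(p_{i}-1)^{2}$, and combining this with the prefactor computation yields the claimed formula.

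This is essentially a substitution, so there is no serious conceptual obstacle; the only non-elementary input is the special value $\zeta_{k}(2)=\frac{\pi^{4}B_{2,k}}{6d_{k}^{3/2}}$ itself, which I would take from the preceding paragraph. The point I would verify most carefully is the normalization: whether the power $d_{k}^{3/2}$ and the Gauss-sum factor $\sqrt{d_{k}}$ have been absorbed consistently, since a stray factor of $d_{k}^{1/2}$ in the evaluation of $L(2,\chi_{k})$ is the most likely place for a normalization error to creep in and would otherwise spoil the clean constant $\tfrac{1}{12}$.
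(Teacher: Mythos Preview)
Your proposal is correct and follows exactly the approach the paper takes: the paper simply states the special value $\zeta_{k}(2)=\frac{\pi^{4}B_{2,k}}{6d_{k}^{3/2}}$ and remarks that this implies the corollary, i.e.\ it is obtained by substituting $n=2$ and this special value into Proposition~\ref{shimizu}. Your additional identification $\ell=\QQ$, $N\mathcal{P}_{i}=p_{i}$ makes explicit the one step the paper leaves implicit, but the argument is the same.
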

As next, we would like to discuss the question about the smoothness
of $X_{\Gamma}$. Note that $X_{\Gamma}$ is smooth if and only if
$\Gamma$ is torsion-free. Here, we will concentrate on subgroups
$\Gamma\subset\Gamma_{\OO}(1)$. It is worth to recall that the torsions
in $\Gamma_{\OO}(1)$ correspond to ring embeddings $\OO_{k}[\xi_{n}]\longrightarrow\OO$
of the roots of unity into the maximal order $\OO$. The general criterion
for the existence of torsions in $\Gamma_{\OO}(1)$ is as follows. 
\begin{lem}
(see \cite{Shavel78}) \label{torsions} Let $\xi_{n}$ be a primitive
$n$-th root of unity. There exists an element of order $n$ in $\Gamma_{\OO}(1)$
if and only if: 
\begin{enumerate}
\item $\xi_{n}+\xi_{n}^{-1}\in k$ 
\item every ramified prime in $A$ is non-split in $k(\xi_{n})$ 
\end{enumerate}
\end{lem}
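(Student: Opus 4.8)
The plan is to exploit the correspondence recalled immediately before the statement: an element of order $n$ in $\Gamma_{\OO}(1)=\OO^{1}/\{\pm1\}$ lifts to a finite-order element of $\OO^{1}$, which is necessarily a root of unity, and such an element is the same datum as a ring embedding $\OO_{k}[\xi_{n}]\hookrightarrow\OO$. Thus the lemma is equivalent to characterizing when $\OO_{k}[\xi_{n}]$ embeds into the maximal order $\OO$, and the strategy is to reduce this to the local--global embedding theorem for quadratic fields into quaternion algebras and then refine it to an order embedding.

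First I would establish the necessity of (1). Any such embedding extends to a $k$-algebra embedding $k(\xi_{n})\hookrightarrow A$. Since $A$ is a quaternion algebra with center $k$, every commutative subfield has degree at most $2$ over $k$, so $[k(\xi_{n}):k]\le2$. As $k$ is totally real, $\xi_{n}\notin k$ for $n>2$, hence the minimal polynomial of $\xi_{n}$ over $k$ is $x^{2}-(\xi_{n}+\xi_{n}^{-1})x+1$, which forces $\xi_{n}+\xi_{n}^{-1}\in k$. This is condition (1), and under it $L:=k(\xi_{n})$ is a totally imaginary quadratic (CM) extension of $k$.

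Next, assuming (1), I would invoke the embedding theorem (see \cite{vign}): a quadratic field $L/k$ admits a $k$-embedding into $A$ if and only if no place of $k$ at which $A$ ramifies splits in $L$, i.e.\ $L\otimes_{k}k_{v}$ is a field for every ramified $v$. At the ramified infinite places $A_{v}=\mathbf{H}$, and since $L$ is CM the completion $L\otimes_{k}k_{v}=\CC$ is already a field, so the condition is automatic there. At each finite ramified prime $\pP_{i}$ the algebra $A_{\pP_{i}}$ is a division algebra, so $L\otimes_{k}k_{\pP_{i}}$ must be a field, which is precisely the requirement that $\pP_{i}$ be non-split in $L$; this identifies the remaining local obstructions with condition (2). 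The reduced-norm-$1$ requirement comes for free: the canonical involution restricts on $L$ to the nontrivial automorphism of $L/k$, namely complex conjugation $\xi_{n}\mapsto\xi_{n}^{-1}$, so $\mathrm{Nrd}(\xi_{n})=N_{L/k}(\xi_{n})=\xi_{n}\xi_{n}^{-1}=1$ and the image of $\xi_{n}$ lies in $\OO^{1}$.

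The genuinely delicate point, and the step I expect to be the main obstacle, is passing from an embedding of the \emph{field} $L$ into $A$ to an embedding of the \emph{order} $\OO_{k}[\xi_{n}]$ into the \emph{given} maximal order $\OO$. A field embedding only places the integral element $\xi_{n}$ in some maximal order, since $\OO_{k}[\xi_{n}]$ is an $\OO_{k}$-order integral over $\OO_{k}$. To realize it inside $\OO$ one appeals to the theory of optimal embeddings (\cite{vign}): the existence of an embedding of $\OO_{k}[\xi_{n}]$ into a maximal order of the genus of $\OO$ is again governed by exactly the local conditions encoded in (2), and for the torsion question one must verify that these local embeddings assemble to a global one into $\OO$ itself. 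Granting this order-theoretic refinement, conditions (1) and (2) are precisely the conditions under which the requisite local embeddings exist, and the converse is completed by producing the global embedding $\OO_{k}[\xi_{n}]\hookrightarrow\OO$, hence an element of order $n$ in $\Gamma_{\OO}(1)$.
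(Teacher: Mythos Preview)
The paper does not supply its own proof of this lemma; it merely records the statement and cites \cite{Shavel78}. Your outline is the standard argument and is correct in substance: the dictionary between torsion in $\Gamma_{\OO}(1)$ and roots of unity in $\OO$, condition (1) from the constraint $[k(\xi_n):k]\le 2$, and condition (2) from the local--global embedding criterion for quadratic extensions into quaternion algebras, with the CM nature of $k(\xi_n)$ disposing of the ramified infinite places.

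The step you rightly single out as delicate---upgrading a field embedding $L\hookrightarrow A$ to an order embedding $\OO_k[\xi_n]\hookrightarrow\OO$ into the \emph{given} maximal order---is handled by Eichler's theorem on optimal embeddings: since $A$ is split at two real places it satisfies the Eichler condition, so strong approximation guarantees that local embeddability into every $\OO_v$ (automatic when $\OO$ is maximal) yields a global embedding into $\OO$ itself, not merely into some order in its genus. You might also keep track of the bookkeeping that an element of order $n$ in $\OO^1/\{\pm1\}$ lifts to an element of order $n$ or $2n$ in $\OO^1$; the paper itself later pairs order-$2$ torsion with $k(\xi_4)$, so the indexing is used somewhat loosely, but this does not affect the logic of your argument.
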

The above lemma already gives us a bound for the order of possible
torsion elements. Namely, for any $a\in A\setminus k$, the algebra
$k(a)$ is commutative subfield of $A$. Since $\dim_{k}A=4$, and
$a\notin k$, $\dim_{k}k(a)=2$ and $k(a)$ is a quadratic extension
of $k$. Assume now that $\xi$ is a primitive $n$-th root of unity
embedded in $\OO$, then as $L=k(\xi)\subset A$ is a quadratic extension
of $k$, we have $\varphi(n)\leq2[k:\QQ]$, since $\varphi(n)=[\QQ(\xi):\QQ]\leq[L:\QQ]\leq2[k:\QQ]$.\\

Above results provide us with criteria to test the conditions in the
definition of admissible groups. Let us state a classification result
in the case of a real quadratic field $k$. 
\begin{thm}
\label{list_real_quadratic_candidates} Let $k=\QQ(\sqrt{d})$ be
a real quadratic field and consider the totally indefinite quaternion
algebra $A=A(k,\pP_{1},\overline{\pP}_{1}\ldots,\pP_{m},\overline{\pP}_{m})$
over $k$ ramified at the prime ideals dividing rational primes $p_{1},\ldots,p_{m}$
which are split in $k$. If $\Gamma\subset\Gamma_{\OO}(1)$ is an
admissible group of type $e$, then the possibilities are as follows:
\end{thm}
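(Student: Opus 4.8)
The plan is to combine the restriction on $e$ with the explicit Euler number formula to reduce the statement to a finite search over pairs $(k,\{p_i\})$. By Proposition \ref{invariants_of_quotient} the quotient $X_\Gamma/\sigma$ is of general type exactly when $e(X_\Gamma)\le 36$; since $X_\Gamma$ is a Shimura surface we have $e(X_\Gamma)=4(1+p_g(X_\Gamma))$, and the existence of a one-dimensional fixed locus forces $2\le g\le p_g(X_\Gamma)$, so $e$ is confined to the finite set $\{12,16,20,24,28,32,36\}$. Writing $e_0=\frac{B_{2,k}}{12}\prod_{i=1}^m(p_i-1)^2$ for the orbifold Euler number of the base group, the real-quadratic Euler formula (Proposition \ref{shimizu} and its corollary) reads $e(X_\Gamma)=[\Gamma_{\OO}(1):\Gamma]\cdot e_0$; since $[\Gamma_{\OO}(1):\Gamma]\ge 1$ this already imposes $e_0\le 36$.

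First I would bound the field. The functional equation $\zeta_k(2)=\frac{\pi^4 B_{2,k}}{6\,d_k^{3/2}}$ together with the trivial estimate $\zeta_k(2)>1$ gives $B_{2,k}>\frac{6}{\pi^4}d_k^{3/2}$. Because $A$ must be a division algebra we have $m\ge 1$ and hence $\prod_i(p_i-1)^2\ge 1$, so the inequality $e_0\le 36$ yields $B_{2,k}\le 432$ and therefore $d_k^{3/2}<72\pi^4$. This is an explicit absolute bound on the discriminant, leaving only finitely many real quadratic fields $k=\QQ(\sqrt d)$, which I would list together with their values of $B_{2,k}$.

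Next, for each such field I would bound the ramification. By the conjugate-pair condition of Proposition \ref{existence_type_2} the algebra is determined by the unordered set of split rational primes $p_1,\dots,p_m$, and the estimate $\prod_i(p_i-1)^2\le 432/B_{2,k}$ leaves only finitely many such sets. For every surviving candidate one computes $e_0$ exactly and records the integers $N$ with $N e_0\in\{12,\dots,36\}$; each such $N=[\Gamma_{\OO}(1):\Gamma]$ is an a priori possible index, producing a finite preliminary list of triples $(k,\{p_i\},e)$.

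The hard part will be the smoothness pruning, namely deciding for which candidates a torsion-free $\Gamma$ of the prescribed index $N$ actually exists. Here I would use Lemma \ref{torsions}: an element of order $n$ lies in $\Gamma_{\OO}(1)$ only if $\varphi(n)\le 2[k:\QQ]=4$, so $n\in\{2,3,4,5,6,8,10,12\}$, and then only if $\xi_n+\xi_n^{-1}\in k$ and every ramified prime is non-split in $k(\xi_n)$. Running through these conditions field by field pins down the torsion of $\Gamma_{\OO}(1)$, which bounds below the index of any torsion-free subgroup and eliminates the values of $N$ incompatible with removing all torsion; one also notes that a $\tau$-stable torsion-free $\Gamma$ can be found once the torsion is controlled, so condition (3) of admissibility poses no further restriction. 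The surviving triples constitute the asserted list, and the entire difficulty is concentrated in this finite, case-by-case bookkeeping rather than in any single estimate.
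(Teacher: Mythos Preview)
Your overall strategy matches the paper's: bound $d_k$ via the Euler-number formula and a lower bound for $B_{2,k}$, enumerate the finitely many $(k,\{p_i\})$, and then prune using the torsion in $\Gamma_{\OO}(1)$. The discriminant bound you obtain from $\zeta_k(2)>1$ is essentially the same as the paper's bound $B_{2,k}\ge 3d_k^{3/2}/50$ taken from Shavel, and the enumeration step is the same in spirit.

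There is, however, a genuine gap in your pruning step. You write that the torsion of $\Gamma_{\OO}(1)$ ``bounds below the index of any torsion-free subgroup''. A mere inequality is not enough to reach the stated table: for instance, with $k=\QQ(\sqrt{13})$ one has $B_{2,k}=4$ and the candidate indices are $N=9,12,15,18,21,24,27$; the presence of an order-$2$ element in $\Gamma_{\OO}(1)$ does not violate $2\le N$ for any of these, yet the odd values must all be discarded. What the paper actually uses is Lemma~\ref{TdividesG:H}: if $T\subset\Gamma_{\OO}(1)$ is a finite subgroup and $\Gamma$ is torsion-free of finite index, then $|T|$ \emph{divides} $[\Gamma_{\OO}(1):\Gamma]$. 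It is this divisibility (applied with $|T|=2,3,4,5,6$ as appropriate to each field) that kills the odd indices for $\QQ(\sqrt{13})$, the indices not divisible by $6$ for $\QQ(\sqrt{17})$, and so on, leaving precisely the fourteen rows of the theorem. Without this lemma your finite search does not terminate in the claimed list.

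A second, smaller point: your final sentence asserting that ``a $\tau$-stable torsion-free $\Gamma$ can be found once the torsion is controlled, so condition (3) poses no further restriction'' is both unnecessary and unjustified. The theorem only records \emph{necessary} conditions (``the possibilities are as follows''), so no existence statement is required; and indeed the paper does not claim that all fourteen rows are realized, only that any admissible $\Gamma$ must fall into one of them.
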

\begin{center}
\begin{tabular}{|c|c|c|c|}
\hline 
type $e$  & $k$  & Ram$(A)$  & $[\Gamma_{\OO}(1):\Gamma]$\tabularnewline
\hline 
$12$  & $\QQ(\sqrt{17})$  & $\pP_{2},\overline{\pP}_{2}$  & $18$\tabularnewline
\hline 
$16$  & $\QQ(\sqrt{13})$  & $\pP_{3},\overline{\pP}_{3}$  & $12$\tabularnewline
\hline 
$16$  & $\QQ(\sqrt{17})$  & $\pP_{2},\overline{\pP}_{2}$  & $24$\tabularnewline
\hline 
$20$  & $\QQ(\sqrt{17})$  & $\pP_{2},\overline{\pP}_{2}$  & $30$\tabularnewline
\hline 
$24$  & $\QQ(\sqrt{13})$  & $\pP_{3},\overline{\pP}_{3}$  & $18$\tabularnewline
\hline 
$24$  & $\QQ(\sqrt{17})$  & $\pP_{2},\overline{\pP}_{2}$  & $36$\tabularnewline
\hline 
$24$  & $\QQ(\sqrt{2})$  & $\pP_{7},\overline{\pP}_{7}$  & $4$\tabularnewline
\hline 
$24$  & $\QQ(\sqrt{33})$  & $\pP_{2},\overline{\pP}_{2}$  & $12$\tabularnewline
\hline 
$28$  & $\QQ(\sqrt{17})$  & $\pP_{2},\overline{\pP}_{2}$  & $42$\tabularnewline
\hline 
$32$  & $\QQ(\sqrt{13})$  & $\pP_{3},\overline{\pP}_{3}$  & $24$\tabularnewline
\hline 
$32$  & $\QQ(\sqrt{17})$  & $\pP_{2},\overline{\pP}_{2}$  & $48$\tabularnewline
\hline 
$32$  & $\QQ(\sqrt{28})$  & $\pP_{3},\overline{\pP}_{3}$  & $6$\tabularnewline
\hline 
$36$  & $\QQ(\sqrt{17})$  & $\pP_{2},\overline{\pP}_{2}$  & $54$\tabularnewline
\hline 
$36$  & $\QQ(\sqrt{33})$  & $\pP_{2},\overline{\pP}_{2}$  & $18$\tabularnewline
\hline 
\end{tabular}
\par\end{center}

In order to prove this theorem we will need the following elementary
lemma.
\begin{lem}
\label{TdividesG:H}Let $G$ be an arbitrary group and $H\subset G$
a torsion-free subgroup of finite index. If $T\subset G$ is a finite
subgroup, then $|T|$ divides $[G:H]$. \end{lem}
\begin{proof}
Let $G/H=\{g_{1}H,\ldots,g_{I}H\}$ be a set of left cosets of $H$
in $G$. The group $T$ acts by left multiplication on this set. And
moreover this action is free. Otherwise, we would have $t\cdot g_{i}H=g_{i}H$
with some non-trivial $t\in T$ and consequently, $g_{i}^{-1}tg_{i}\in H$,
which is not possible, since $H$ is torsion-free and $t$ as well
as $g_{i}^{-1}tg_{i}$, is of finite order. By elementary group theory,
the length of any $T$-orbit on $G/H$ is the same as the order of
$|T|$. And since $G/H$ is the union of different $T$-orbits, $|G/H|$
is divisible by $|T|$. 
\end{proof}
\emph{Proof of Theorem \ref{list_real_quadratic_candidates}.} Recall
that we restrict the type $e$ of an admissible group to values $e=12+4k\leq36$.
If $\Gamma\subset\Gamma_{\OO}(1)$ is admissible of type $e$, then
\[
36\geq[\Gamma_{\OO}(1):\Gamma]\frac{B_{2,k}}{12}\prod_{i=1}^{m}(p_{i}-1)^{2}.
\]
 Since $[\Gamma_{\OO}(1):\Gamma]$ and $(p_{i}-1)^{2}$ are positive
we have the condition $36\geq\frac{B_{2,k}}{12}$. By \cite{Shavel78},
Proposition 3.2, the Bernoulli number $B_{2,k}$ is bounded below
by $3d_{k}^{3/2}/50$ and this implies the upper bound $d_{k}<373$
for the discriminant of $k$. Using the formula for the second generalized
Bernoulli number given in \cite{Shavel78}, p.~228, we can compute
all the values $B_{2,k}$ for $d_{k}<373$ easily with the help of
a computer. With the list of all these values we check the necessary
conditions: 
\begin{itemize}
\item $36\geq B_{2,k}/12$
\item $B_{2,k}\mid12\cdot e=144,192,240,\ldots432$ for integral $B_{2,k}$
( $\Leftrightarrow$ $d_{k}\neq5$) and with obvious modification
for $d_{k}=5$.
\item The square part of $12e/B_{2,k}$ is divisible by the product $\prod_{p}(p-1)^{2}$
where $p$ runs over subsets of rational primes which are split in
$k$ (note that $p=2$, if split in $k$, contributes the factor $1$
to the product). 
\end{itemize}
We obtain the following list of tuples satisfying all the conditions

\begin{center}
\begin{tabular}{|c|c|c|c|}
\hline 
$d_{k}$  & $B_{2,k}$  & $e$  & $12e/B_{2,k}=I\cdot\prod(p-1)^{2}$\tabularnewline
\hline 
$137$  & $192$  & $16$  & $1=(2-1)^{2}$\tabularnewline
\hline 
$113$  & $144$  & $12$  & $1=(2-1)^{2}$\tabularnewline
\hline 
$109$  & $108$  & $36$  & $4=(3-1)^{2}$\tabularnewline
\hline 
$105$  & $144$  & $12$  & $1=(2-1)^{2}$\tabularnewline
\hline 
$85$  & $72$  & $24$  & $1=(3-1)^{2}$\tabularnewline
\hline 
$40$  & $28$  & $28$  & $12=3\cdot(3-1)^{2}$\tabularnewline
\hline 
$37$  & $20$  & $20$  & $12=3\cdot(3-1)^{2}$\tabularnewline
\hline 
$33$  & $24$  & $24$  & $12\cdot(2-1)^{2}$\tabularnewline
\hline 
$29$  & $12$  & $16$  & $16=(5-1)^{2}$\tabularnewline
\hline 
$29$  & 12  & $32$  & $32=2\cdot(5-1)^{2}$\tabularnewline
\hline 
$29$  & 12  & $36$  & $36=(7-1)^{2}$\tabularnewline
\hline 
$28$  & $16$  & $16$  & $12=3\cdot(3-1)^{2}$\tabularnewline
\hline 
$28$  & 16  & $32$  & $24=6(3-1)^{2}$\tabularnewline
\hline 
$24$  & $12$  & $16$  & $16=(5-1)^{2}$\tabularnewline
\hline 
$24$  & 12  & $32$  & $32=2\cdot(5-1)^{2}$\tabularnewline
\hline 
$17$  & $8$  & $12+4k$, $k=0,\ldots,6$  & $e\cdot(2-1)^{2}$\tabularnewline
\hline 
$13$  & $4$  & $12+4k$, $k=0,\ldots,6$  & $(9+3k)\cdot(3-1)^{2}$\tabularnewline
\hline 
$8$  & $2$  & $12,24,36$  & $2\cdot(7-1)^{2}$,$4\cdot(7-1)^{2}$, $6\cdot(7-1)^{2}$\tabularnewline
\hline 
$5$  & $\frac{4}{5}$  & $20$  & $3\cdot(11-1)^{2}$\tabularnewline
\hline 
\end{tabular}
\par\end{center}

We observe (keeping also in mind the splitting behavior of $2$ in
$k$) that the set of ramified places in $A$ is determined by the
value $12e/B_{2,k}$ in the table. 

As next we identify those subgroups $\Gamma\subset\Gamma_{\OO}(1)$
which cannot be torsion-free. For this we use the two Lemmas \ref{torsions}
and \ref{TdividesG:H}. Namely, note first that $\Gamma_{\OO}(1)$
contains at most torsions of order $2,3$ and $6$ for $k\neq\QQ(\sqrt{5}),\QQ(\sqrt{2})$
and additionally elements of order $5$ for $k=\QQ(\sqrt{5})$ and
of order $4$ for $k=\QQ(\sqrt{2})$. Case by case analysis leads
to the final statement; to check the splitting behavior in $k(\xi_{n})$
one can use the criterion of Shavel (see \cite{Shavel78}, Theorem
4.8). We double-checked the the conditions of Lemma \ref{torsions}
explicitly with PARI/GP.

\subsection{Admissible groups defined by congruences}

Let $k$ a totally real number field and $A(k,\pP_{1},\overline{\pP}_{1},\ldots,\pP_{m}\overline{\pP}_{m})$
an indefinite quaternion algebra over $k$, $\OO$ a maximal order
in $A$, $\OO$ and $\Gamma_{\OO}(1)$ as in the previous sections.
If $\aaa$ is a two-sided $\OO$ ideal in $\OO$, the \emph{principal
congruence subgroup} in $\OO(1)$ associated with $\aaa$ is defined
as 
\[
\OO(\aaa)=\{g\in\OO\mid Nrd(g)=1,g-1\in\aaa\}
\]
 Additionally we define $\Gamma_{\OO}(\aaa)=\OO(\aaa)/Z$ where $Z$
denotes the center of $\OO(\aaa)$. A \emph{congruence subgroup} in
$\mathcal{O}(1)$, resp.~$\Gamma_{\OO}(1)$, is a subgroup $G\subset\OO(1)$,
resp.~$\Gamma\subset\Gamma_{\OO}(1)$, which contains some $\OO(\aaa)$,
resp.~$\Gamma_{\OO}(\aaa)$. The group $\OO(\aaa)$ is a normal subgroup
of finite index in $\OO(1)$ and we have $\OO(1)/\OO(\aaa)\cong\Gamma_{\OO}(1)/\pm\Gamma_{\OO}(\aaa)$
if $2\notin\aaa$ and $\OO(1)/\OO(\aaa)\cong\Gamma_{\OO}(1)/\Gamma_{\OO}(\aaa)$
if $2\in\aaa$. The size $|\OO(1)/\OO(\aaa)|$ is computed as follows 
\begin{itemize}
\item Any two-sided ideal $\aaa$ in $\OO$ has a unique decomposition $\aaa=\qQQ_{1}^{e_{1}}\cdot\ldots\cdot\qQQ_{r}^{e_{r}}$
as a product of prime ideal powers. Then, $\OO(1)/\OO(\aaa)$ is a
direct product 
\[
\OO(1)/\OO(\aaa)=\OO(1)/\OO(\qQQ_{1}^{e_{1}})\times\ldots\times\OO(1)/\OO(\qQQ_{r}^{e_{r}}).
\]
 
\item Let $\qQQ$ be a prime ideal in $\OO$. The $\OO_{k}$-ideal $\qQ=Nrd(\qQQ)$
generated by the reduced norms of elements in $\qQQ$, which is also
the intersection $\qQ=\mathfrak{Q\cap\OO}_{k}$, is a prime ideal
and there are two possible cases: 

\begin{itemize}
\item $\qQ\notin Ram(A)$. Then, $\qQQ=\qQ\OO$ and $\OO(1)/\OO(\qQQ_{1}^{e})\cong SL_{2}(\OO_{k}/\qQ^{e})$. 
\item $\qQ\in Ram(A)$. Then, $\qQQ^{2}=\qQ\OO$ and $\OO(1)/\OO(\qQQ^{e})\cong(\OO/\qQQ^{e})_{1}=\ker\left((\OO/\qQQ^{e})^{\ast}\stackrel{Nr}{\rightarrow}(\OO_{k}/\qQ^{e})^{\ast}\right)$,
where $Nr$ is the norm map induced by the reduced norm $Nrd:\OO\rightarrow\OO_{k}$. 
\end{itemize}
\end{itemize}
\begin{rem}
If we want to search for admissible groups among the principal congruence
subgroups, then we must note the following: for $g\in\OO(\qQQ)$ we
have $\overline{g}\in\OO(\overline{\qQQ})$, so, $\Gamma_{\OO}(\qQQ)$
will be admissible if and only if $\qQQ=\overline{\qQQ}$ (for more
precise statement see Theorem \ref{invariantthm} and Lemma \ref{invarianceunderinvolution}
below). This is already a strong condition: If $k$ is a quadratic
field, the prime $\qQ$ under $\qQQ$ must be inert or ramified over
$\ell$. For instance, this in combination with list of possible candidates
from Theorem \ref{list_real_quadratic_candidates} shows there are
no admissible principal congruence subgroups of any type $e$ defined
over a real quadratic field. 
\end{rem}
In the following we will make use of the following well-known fact. 
\begin{lem}
\label{congruencetorsion} Let $\qQ$ be a prime ideal in $\mathcal{O}_{k}$,
unramified in a $k$-central quaternion algebra $A$ and $\mathfrak{Q=\qQ\OO}$
the corresponding $\OO$-ideal . Let $q\ZZ=\qQ\cap\ZZ$ be the rational
prime divisible by $\qQ$ and finally, let $x\in\OO^{1}(\mathfrak{Q})$
be an element of order $p$, where $p$ is a prime. Then $p=q$. \end{lem}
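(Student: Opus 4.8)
The plan is to localize the situation at $\qQ$ and reduce the statement to the well-known fact that the principal congruence subgroup of level $\qQ$ inside $SL_2$ over the corresponding local ring is a pro-$q$ group, so that its only elements of prime order have order $q$. First I would use that $\qQ$ is unramified in $A$ and that $\OO$ is maximal to identify the localization $\OO_{\qQ}=\OO\otimes_{\OO_{k}}(\OO_{k})_{\qQ}$ with the matrix ring $M_{2}(R)$, where $R=(\OO_{k})_{\qQ}$ is a discrete valuation ring with maximal ideal $\mathfrak m=\qQ R$ and residue field $\OO_{k}/\qQ$ of characteristic $q$. Under this isomorphism the reduced norm becomes the determinant, and the defining condition $x-1\in\mathfrak Q=\qQ\OO$ becomes $x\equiv I\pmod{\mathfrak m}$. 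Hence $x$ is carried to an element, still of order $p$ and distinct from $I$, of
\[
G_{1}=\{\,g\in SL_{2}(R)\mid g\equiv I\pmod{\mathfrak m}\,\}.
\]

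Next I would introduce the $\mathfrak m$-adic congruence filtration $G_{n}=\{g\in SL_{2}(R)\mid g\equiv I\pmod{\mathfrak m^{n}}\}$ for $n\geq1$, which are normal subgroups of $SL_{2}(R)$. The key computation is that the map $I+M\mapsto M\bmod\mathfrak m^{n+1}$ induces a group isomorphism of $G_{n}/G_{n+1}$ onto a subgroup of the additive group $M_{2}(\mathfrak m^{n}/\mathfrak m^{n+1})$ (in fact the trace-zero part, since $\det(I+M)=1$ forces $\mathrm{tr}(M)\in\mathfrak m^{2n}\subseteq\mathfrak m^{n+1}$). Indeed, for $M,M'\in M_{2}(\mathfrak m^{n})$ one has $(I+M)(I+M')=I+(M+M')+MM'$ with $MM'\in M_{2}(\mathfrak m^{2n})\subseteq M_{2}(\mathfrak m^{n+1})$, so multiplication is additive modulo $\mathfrak m^{n+1}$. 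Since $\mathfrak m^{n}/\mathfrak m^{n+1}$ is a vector space over the residue field $\OO_{k}/\qQ$ of characteristic $q$, each quotient $G_{n}/G_{n+1}$ is an elementary abelian $q$-group. By the Krull intersection theorem $\bigcap_{n}\mathfrak m^{n}=0$ in the Noetherian local domain $R$, whence $\bigcap_{n}G_{n}=\{I\}$.

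Finally I would run the elementary order-extraction argument. As $x\neq I$, there is a largest integer $n\geq1$ with $x\in G_{n}$, so the image $\bar x$ of $x$ in $G_{n}/G_{n+1}$ is nontrivial; from $x^{p}=1$ its order divides $p$ and is therefore exactly $p$. But $G_{n}/G_{n+1}$ is a $q$-group, which forces $p=q$. The scalar case $x\in k$ is subsumed: then $x=\pm1$, and $x\in G_{1}$ happens only when $2\in\qQ$, i.e.\ $q=p=2$.

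The delicate point worth stating carefully is the local identification $\OO_{\qQ}\cong M_{2}(R)$ compatible with reduced norm and congruences, together with the verification that the graded pieces $G_{n}/G_{n+1}$ are $q$-groups; once this pro-$q$ structure of the congruence filtration is established, the conclusion is immediate. Everything else is formal, and in particular no completeness of $R$ is needed, the Krull intersection theorem sufficing to guarantee $\bigcap_{n}G_{n}=\{I\}$.
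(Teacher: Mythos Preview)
Your strategy via the congruence filtration is sound in spirit and genuinely different from the paper's proof, which instead computes $Nrd(x-1)=N_{k(x)/k}(x-1)$, pushes this norm down to $\QQ$, and invokes $N_{\QQ(\zeta_p)/\QQ}(1-\zeta_p)=\pm p$ to force $p=q$. However, your setup contains a real error: you assert that the localization $\OO_{\qQ}=\OO\otimes_{\OO_k}(\OO_k)_{\qQ}$ is isomorphic to $M_2(R)$ with $R=(\OO_k)_{\qQ}$. This is impossible, because the fraction field of $R$ is $k$ itself, so such an isomorphism would yield $A\cong M_2(k)$, contradicting the standing assumption that $A$ is a division algebra. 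The hypothesis that $\qQ$ is unramified in $A$ only gives $A\otimes_k\hat k_{\qQ}\cong M_2(\hat k_{\qQ})$ over the \emph{completion}; your closing remark that ``no completeness of $R$ is needed'' is therefore precisely backwards at this step.

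The repair is easy and leaves the heart of your argument intact. Either pass to the completion $\hat R$, where $\OO\otimes_{\OO_k}\hat R\cong M_2(\hat R)$ genuinely holds and your filtration computation proceeds verbatim; or, more cleanly, dispense with any local identification and run the filtration directly in $\OO$: with $G_n=\{x\in\OO^1:x-1\in\qQ^{\,n}\OO\}$, the map $x\mapsto x-1$ embeds $G_n/G_{n+1}$ into the $(\OO_k/\qQ)$-module $\qQ^{\,n}\OO/\qQ^{\,n+1}\OO$, an elementary abelian $q$-group, and Krull intersection for the finitely generated $\OO_k$-module $\OO$ gives $\bigcap_n\qQ^{\,n}\OO=0$. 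Your order-extraction argument then finishes exactly as written.
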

\begin{proof}
We have $x\in\OO^{1}(\mathfrak{Q})\Leftrightarrow Nrd(x-1)\in\qQ$.
We can assume that $x$ is a primitive $p$-th root of unity contained
in $A$. Since $k(x)\subset A$, we have $Nrd(x-1)=N_{k(x)/k}(x-1)$.
Taking $N_{k/\QQ}(\cdot)$ on both sides we obtain 
\[
N_{k/\QQ}(Nrd(x-1))=N_{k(x)/\QQ}(x-1)\in N_{k/\QQ}(\qQ)=q^{f}\ZZ.
\]
 where $f>0$ is the inertia degree of $\qQ.$ On the other hand $x-1\in\QQ(x)$,
and therefore $N_{k(x)/\QQ}(x-1)=N_{\QQ(x)/\QQ}(x-1)^{d}$, where
$d=[k(x):\QQ(x)]$. Altogether, we obtain the relation 
\[
N_{\QQ(x)/\QQ}(x-1)^{d}\in q^{f}\ZZ.
\]
 Finally, it is well-known that $N_{\QQ(x)/\QQ}(x-1)=\pm p$ and from
this the claim follows. 
\end{proof}
Assume that the prime ideal $\qQ\subset\OO_{k}$ is unramified in
$A$, then $\qQ\OO$ is a prime ideal in $\OO\subset A$. Since in
this case $\mathfrak{Q=\qQ\OO}$ we will write $\Gamma_{\OO}(\mathfrak{\qQ})=\Gamma_{\OO}(\mathfrak{Q}).$
Let $s=q^{f}$ be the absolute norm $N_{k/\QQ}(\qQ)=|\OO_{k}/\qQ|$
of $\qQ$. Then $\Gamma_{\OO}(1)/\Gamma_{\OO}(\qQ)\cong PSL_{2}(\OO_{k}/\qQ)\cong PSL_{2}(s)=PSL_{2}(\FF_{s})$.
By the classification theorem of Dickson, we know all the subgroups
of $PSL_{2}(s)$. Let us mention two particular subgroups which we
will use later on: 
\begin{enumerate}
\item Borel subgroup $B\subset PSL_{2}(s)$ consisting of all upper triangular
matrices in $PSL_{2}(s)$. The group $B$ is a maximal subgroup of
$PSL_{2}(s)$ of index $s+1$ and order $s(s-1)/t$, with $t=gcd(s-1,2)$. 
\item Unipotent subgroup $U\subset PSL_{2}(s)$ consisting of all elements
in $B$ of the form $\left(\begin{smallmatrix}1 & \ast\\
0 & 1
\end{smallmatrix}\right)$. The group $U$ is a subgroup of index $(s^{2}-1)/t$ and order $s$. 
\end{enumerate}
With above notations let $\pi:\Gamma_{\OO}(1)\longrightarrow PSL_{2}(s)$
denote the epimorphism induced by the canonical projection $\Gamma_{\OO}(1)\longrightarrow Q=\Gamma_{\OO}(1)/\Gamma_{\OO}(\qQ)$.
Let $\Gamma^{B}(\qQ)=\pi^{-1}(B)$ and $\Gamma^{U}(\qQ)=\pi^{-1}(U)$
be the inverse images of $B$ and $U$ respectively. These are subgroups
of $\Gamma_{\OO}(1)$ of index equal to the index of its image in
$PSL_{2}(s)$ under $\pi$. It is important to mention that $\Gamma_{\OO}^{B}(\qQ)$
is also constructed as the group of the norm-1 elements (modulo center)
in an appropriate Eichler order $\mathcal{E}$. The construction is
as follows: Let $\OO$ be a maximal order and denote $\OO_{v}=\OO\otimes_{\OO_{k}}R_{v}$
the localizations of $\OO$ at finite places $v$ of $k$. There $R_{v}$
denotes the valuation ring in the localization $k_{v}$ of $k$ at
$v$. Note that $\OO=\bigcap_{v}\OO_{v}$. Let $\OO'$ be another
maximal order with the property that for all finite places $v\neq\qQ$
we have $\OO_{v}=\OO'_{v}$ and additionally $\OO_{\qQ}\cap\OO'_{\qQ}$
has index $N(\qQ)=\#R_{\qQ}/\qQ R_{\qQ}$ in both, $\OO_{\qQ}$ and
$\OO'_{\qQ}$. Put $\mathcal{E=\OO\cap\OO}'$. Then, by definition,
$\mathcal{E}$ is an Eichler order of level $\qQ$. If $\qQ$ is ramified
in $A$, we have $\mathcal{E=\OO}$ and in the case where $\qQ$ is
unramified, after a possible conjugation, we can assume that $\OO_{\qQ}=M_{2}(R_{\qQ})$
and we can choose $\OO'_{\qQ}=PM_{2}(R_{\mathfrak{\mathfrak{q}}})P^{-1}$
with $P=diag(1,\varpi)$, where $\varpi$ is a generator of the valuation
ideal $\mathfrak{q}R_{\mathfrak{q}}$. The reduction modulo $\mathfrak{q}$
maps $\mathcal{E_{\qQ}=}\OO_{\qQ}\cap\OO'_{\qQ}$ surjectively to
the subalgebra of upper triangular matrices in $M_{2}(R_{\mathfrak{q}}/\mathfrak{q}R_{\mathfrak{q}})$.
Therefore the group $\mathcal{E}^{1}$ of norm-1 elements in $\mathcal{E}$
corresponds to exactly those elements in $\OO^{1}$ which modulo $\qQ\OO$
are upper triangular.

In general, a $k/\ell$-involution on a quaternion algebra $A$ does
not preserve a maximal order. But under certain conditions on $A$
we can ensure the existence of such an order: 
\begin{thm}
(Scharlau, \cite[Theorem 4.6]{Sharlau}) \label{invariantthm} Let
$A$ be a quaternion algebra over $k$ admitting a $k/\ell$-involution
$\tau$. Then there exists a maximal order $\OO$ invariant under
$\tau$ unless the following exceptional situation is given: 
\begin{itemize}
\item the extension $k/\ell$ is unramified and 
\item the number of places $v\in Ram(A)$ is $\equiv2\bmod4$. 
\end{itemize}
\end{thm}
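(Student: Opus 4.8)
The plan is to prove the theorem by a local--global analysis of maximal orders, reducing the global existence question to a family of purely local problems at the finite places of $k$ fixed by $\tau$, and then assembling the local answers by a reciprocity argument. First I would record that a maximal order $\OO\subset A$ is the same datum as a coherent family of local maximal orders $\OO_w\subset A_w=A\otimes_k k_w$, $w$ running over the finite places of $k$, agreeing with a fixed reference order at almost all $w$. Since $\tau$ restricts to the nontrivial automorphism of $k/\ell$, it permutes the places of $k$ by $w\mapsto\bar w$, and $\tau(\OO)=\OO$ is equivalent to $\tau(\OO_w)=\OO_{\bar w}$ for every $w$. I would then dispose of the places carrying no condition: for a pair $\{w,\bar w\}$ with $w\neq\bar w$ (the primes of $k$ above a prime of $\ell$ split in $k$) one simply sets $\OO_{\bar w}:=\tau(\OO_w)$, and at a place $w=\bar w$ where $A_w$ is a division algebra the local maximal order is unique, hence automatically $\tau$-stable. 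Thus the whole problem concentrates at the finite places $w$ with $w=\bar w$ --- those above the primes of $\ell$ inert or ramified in $k/\ell$ --- at which $A_w\cong M_2(k_w)$.

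At such a place $\tau$ induces an involution of the second kind on $M_2(k_w)$ over the quadratic extension $k_w/\ell_v$; writing $\tau(X)=g\bar X g^{-1}$ with $g\bar g=\lambda\in\ell_v^{*}$, a $\tau$-stable maximal order amounts to a lattice $L$ with $g\bar L$ homothetic to $L$, i.e.\ to a $\tau$-fixed vertex of the Bruhat--Tits tree of $SL_2(k_w)$. An involution of a tree either fixes a vertex or inverts an edge, and a short valuation computation shows that a fixed vertex exists precisely when the class of $\lambda$ in $\ell_v^{*}/N_{k_w/\ell_v}(k_w^{*})\cong\ZZ/2$ is trivial. Identifying the $\tau$-fixed subalgebra with the completion $A'_v$ of the quaternion algebra $A'$ over $\ell$ attached to $\tau$ (with $A=A'\otimes_\ell k$, as in the proof of Proposition \ref{existence_type_2}), this class is trivial exactly when $A'_v$ is split, and in the ramified case $k_w/\ell_v$ the same computation always produces a fixed vertex. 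Hence the only local obstruction is a sign in $\ZZ/2$ sitting at the inert primes of $\ell$ where $A'$ ramifies; verifying the claim that ramified $k_w/\ell_v$ never obstructs is a subordinate technical point I would check by an explicit normalization of $g$.

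It then remains to match this collection of local signs to the single global invariant $|\mathrm{Ram}(A)|\bmod 4$ under the hypothesis that $k/\ell$ is unramified. Here I would combine the ramification dictionary between $\mathrm{Ram}(A)$ and $\mathrm{Ram}(A')$ (Proposition \ref{existence_type_2}), the fact that $|\mathrm{Ram}(A')|$ is even, and Hilbert reciprocity for the Hasse invariants of $A'$, to see that the local signs are not independent: their joint triviality is governed by one parity. The hypothesis that $k/\ell$ is everywhere unramified is exactly what removes the contributions of the ramified primes (which never obstruct) and of the split primes (which occur in $\tau$-pairs), so that the residual parity is that of $\mathrm{Ram}(A)$. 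I expect this reciprocity bookkeeping --- deriving the sharp even-versus-odd alternative $|\mathrm{Ram}(A)|\equiv 2\bmod 4$ from the local $\ZZ/2$-data, rather than a cruder ``some obstructed place versus none'' statement --- to be the main difficulty of the proof, since it is precisely the step where the global constraints among the local invariants must be made explicit.
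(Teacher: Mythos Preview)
The paper does not supply a proof of this theorem at all: it is quoted verbatim from Scharlau (\cite[Theorem 4.6]{Sharlau}) and used as a black box, so there is no ``paper's own proof'' to compare against. Your local--global outline via Bruhat--Tits trees is the standard route and is essentially how Scharlau argues, so in that sense your approach is the expected one; but strictly speaking the comparison you were asked to make is vacuous here.

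One substantive comment on your sketch itself: you correctly locate the obstruction at the non-split places and phrase it as the Hasse invariant of $A'_v$, but be careful that for a \emph{fixed} $\tau$ the algebra $A'$ is completely determined, so the local signs are not free parameters to be balanced by reciprocity --- either they all vanish or they do not. The genuine freedom that makes the parity $|\mathrm{Ram}(A)|\bmod 4$ appear is the choice of $\tau$ (equivalently of $A'$) among all $k/\ell$-involutions on $A$: different $A'$ with $A'\otimes_\ell k\cong A$ differ precisely by ramification at non-split primes of $\ell$, and the reciprocity constraint on $|\mathrm{Ram}(A')|$ then forces the number of obstructed (inert) places to have a fixed parity determined by $|\mathrm{Ram}(A)|$. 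Your final paragraph gestures at this, but as written it reads as if reciprocity is being applied to a single fixed $A'$, which would not by itself yield the dichotomy; making this freedom explicit is exactly the bookkeeping you flag as the main difficulty.
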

\begin{cor}
\label{invarianceunderinvolution} Assume that $A$ admits a $k/\ell$-involution
$\tau$ which preserves a maximal order $\OO$ and let $\qQ\subset\OO_{k}$
be a prime ideal which is unramified in $A$ and $\qQ\OO$ the corresponding
prime ideal in $\OO$. Assume that the non-trivial $k/\ell$-automorphism
$x\mapsto\overline{x}$ maps $\qQ$ to itself, that is, $\overline{\qQ}=\qQ$.
Then, for each of the groups $\Gamma=\Gamma_{\OO}(1)$, $\Gamma_{\OO}^{B}(\qQ)$,
$\Gamma_{\OO}^{U}(\qQ)$ and $\Gamma_{\OO}(\qQ)$ there exists a $k/\ell$-involution
on $A$ leaving $\Gamma$ invariant.\end{cor}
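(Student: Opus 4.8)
\emph{The easy groups $\Gamma_{\OO}(1)$ and $\Gamma_{\OO}(\qQ)$.} For these the given involution $\tau$ itself works. Since every involution of second kind commutes with the canonical anti-involution, $Nrd(\tau(a))=\tau(a)\tau(a)^{*}=\tau(aa^{*})=\overline{Nrd(a)}$, so $\tau$ carries the norm-one group $\OO^{1}$ into itself; as $\tau(\OO)=\OO$ by hypothesis, $\tau$ preserves $\OO^{1}$ and hence $\Gamma_{\OO}(1)$. For $\Gamma_{\OO}(\qQ)$ one only has to add the congruence condition: if $g-1\in\qQ\OO$ then $\tau(g)-1=\tau(g-1)\in\tau(\qQ\OO)=\overline{\qQ}\,\OO=\qQ\OO$, where the last equality is exactly the hypothesis $\overline{\qQ}=\qQ$. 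Thus $\tau$ leaves $\OO(\qQ)$, and therefore $\Gamma_{\OO}(\qQ)$, invariant.

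\emph{Reducing the Borel and unipotent cases to a fixed line.} Recall that $\OO/\qQ\OO\cong M_{2}(\OO_{k}/\qQ)=M_{2}(\FF_{s})$ and $\Gamma_{\OO}(1)/\Gamma_{\OO}(\qQ)\cong\psl_{2}(\FF_{s})$, that $\Gamma^{B}_{\OO}(\qQ)=\pi^{-1}(B)$ with $B$ the stabiliser of a line $[v_{0}]\in\PP^{1}(\FF_{s})$, and that $\Gamma^{U}_{\OO}(\qQ)=\pi^{-1}(U)$ with $U$ the unipotent radical of $B$. Any involution $\sigma$ with $\sigma(\OO)=\OO$ and $\sigma|_{k}=\alpha$ reduces modulo $\qQ\OO$ to a ring automorphism $\bar\sigma$ of $M_{2}(\FF_{s})$, semilinear over the residual automorphism induced by $\alpha$, and so induces an automorphism of $\psl_{2}(\FF_{s})$ and a projective involution of $\PP^{1}(\FF_{s})$. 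If this projective involution has a fixed line, I take it to be $[v_{0}]$; then $\bar\sigma(B)=B$, and since $U$ is the unique Sylow-$p$ subgroup of $B$ (hence characteristic) also $\bar\sigma(U)=U$. Therefore \emph{both} $\Gamma^{B}_{\OO}(\qQ)$ and $\Gamma^{U}_{\OO}(\qQ)$ are $\sigma$-invariant, and the whole problem is reduced to producing an involution whose reduction fixes a point of $\PP^{1}(\FF_{s})$ --- equivalently, an involution-fixed neighbour of the vertex $\OO_{\qQ}$ in the Bruhat--Tits tree at $\qQ$, i.e.\ an invariant Eichler order of level $\qQ$.

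\emph{Existence of the fixed line.} Put $\mathcal{P}=\qQ\cap\ell$; the hypothesis $\overline{\qQ}=\qQ$ means $\mathcal{P}$ is inert or ramified in $k/\ell$. If $\mathcal{P}$ is inert, the residual automorphism is the nontrivial element of $\gal(\FF_{s}/\FF_{s_{0}})$ (with $s=s_{0}^{2}$), so $\bar\tau$ is genuinely semilinear; by Galois descent (Hilbert 90) the associated semilinear involution of $\FF_{s}^{2}$ admits a full $\FF_{s_{0}}$-rational form and hence fixed lines, so here $\tau$ already fixes a line. If $\mathcal{P}$ is ramified, the residual automorphism is trivial, so $\bar\tau$ is \emph{inner}, $\bar\tau(Y)=\bar{P}_{0}Y\bar{P}_{0}^{-1}$ with $\bar{P}_{0}^{2}$ scalar, and the fixed lines are the fixed points of $\bar{P}_{0}$ on $\PP^{1}(\FF_{s})$; these exist unless $\bar{P}_{0}$ is a non-split involution of $PGL_{2}(\FF_{s})$.

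\emph{The main obstacle.} The remaining case --- $\mathcal{P}$ ramified with $\bar{P}_{0}$ non-split --- is the crux and forces a change of involution. Conceptually it arises exactly when the quaternion $\ell$-subalgebra $A'\subset A$ with $A=A'\otimes_{\ell}k$ attached to $\tau$ is itself ramified at $\mathcal{P}$; since the ramification of $A'$ at the non-split prime $\mathcal{P}$ is \emph{not} constrained by $A$ (at non-split primes $A$ is split regardless of $A'$, as in the proof of Proposition \ref{existence_type_2}), one can pass to an involution whose subalgebra is split at $\mathcal{P}$. Concretely I replace $\tau$ by $\sigma(a)=m^{-1}\tau(a)m$, which by the cited remark of Granath exhausts all $k/\ell$-involutions; because $\sigma|_{k}=\tau|_{k}=\alpha$ is unchanged, reduction at $\qQ$ sends $\bar{P}_{0}$ not to a conjugate but to $\bar{m}^{-1}\bar{P}_{0}$, and as $\bar{m}$ ranges over reductions of units this can be made a split involution with a fixed line. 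The genuinely technical point --- and the one I expect to be the main difficulty --- is to produce a global $m$ realising the chosen reduction at $\qQ$ while satisfying $\tau(m)m\in k^{*}$ (so that $\sigma$ is an involution) and normalising $\OO$ (so that $\sigma(\OO)=\OO$); the local obstruction at $\qQ$ vanishes by construction, and the global element is obtained by strong approximation in the $\ell$-group $\{m\in A^{*}:\tau(m)m\in k^{*}\}$, which is non-compact at a split infinite place. The three other groups, and the inert case, are by comparison routine.
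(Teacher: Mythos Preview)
Your treatment of $\Gamma_{\OO}(1)$ and $\Gamma_{\OO}(\qQ)$ coincides with the paper's. For $\Gamma_{\OO}^{B}(\qQ)$ and $\Gamma_{\OO}^{U}(\qQ)$ your route diverges: the paper works directly with the local Eichler order $\mathcal{E}_{\qQ}=M_{2}(R_{\qQ})\cap PM_{2}(R_{\qQ})P^{-1}$ and asserts that the extended involution $\hat\tau$ sends $P=\mathrm{diag}(1,\varpi)$ to $\pm P$; in effect this presumes the isomorphism $\OO_{\qQ}\cong M_{2}(R_{\qQ})$ can be chosen so that $\hat\tau$ is entry-wise conjugation, i.e.\ that the $\tau$-fixed subalgebra $A'$ is already split at $\mathcal{P}$. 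The paper does not isolate the subcase you flag and uses the given $\tau$ throughout. Your reformulation via fixed points on $\PP^{1}(\FF_{s})$ is more structural and makes the obstruction visible; the inert-case argument (no nontrivial $\FF_{s_{0}}$-form of $M_{2}$ over a finite field, hence $\bar\tau$ is conjugate to entry-wise Frobenius and fixes the $\FF_{s_{0}}$-rational lines) is correct, as is the remark that $U$, being the unique $p$-Sylow, is characteristic in $B$.

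That said, your proposal has a genuine gap precisely where you anticipate it. In the ramified non-split subcase you need a global $m\in A^{*}$ with (i) $\tau(m)m\in k^{*}$, (ii) $m$ normalising $\OO$, and (iii) reduction $\bar m^{-1}\bar P_{0}$ split on $\PP^{1}(\FF_{s})$. The phrase ``obtained by strong approximation in the $\ell$-group $\{m:\tau(m)m\in k^{*}\}$'' is not yet a proof: you have neither identified this group precisely (it is the similitude group of the hermitian pairing attached to $\tau$), nor verified the strong-approximation hypothesis for it, nor shown that (ii) and (iii) are simultaneously satisfiable at $\qQ$. Condition (ii) is restrictive --- the normaliser of $\OO$ in $A^{*}$ is $k^{*}\OO^{*}$ together with Atkin--Lehner elements supported on $\mathrm{Ram}(A)$, so at $\qQ\notin\mathrm{Ram}(A)$ you are confined to $k_{\qQ}^{*}\OO_{\qQ}^{*}$; you must then check that reductions of such $m$ actually reach a coset making $\bar m^{-1}\bar P_{0}$ split while still satisfying (i) locally. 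Until this local-to-global step is written out, the argument in this subcase is incomplete.
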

\begin{proof}
The group $\Gamma_{\OO}(1)$ is $\tau$-invariant since $\OO$ is
$\tau$-invariant and for any $a\in A$ we have $Nrd(\overline{a})=\overline{Nrd(a)}$.
Also, $\Gamma(\qQ)$ is invariant, since for every $x\in\Gamma_{\OO}(\qQ)$
there is a representative $x'\in\OO$ of the class $x$ satisfying
$x-1\in\qQ\OO$, thus $\overline{(x'-1)}\in\overline{\qQ\OO}=\overline{\qQ}\OO=\qQ\OO$,
by assumption, hence $\overline{x'}\equiv1\bmod\qQ\OO$. 

In order to prove the invariance of other groups, we first localize
at $\mathfrak{q}$; since $\mathfrak{q\notin}Ram(A)$, the local algebra
$A_{\mathfrak{q}}=A\otimes_{k}k_{\mathfrak{q}}$ is isomorphic to
$M_{2}(k_{\mathfrak{q}})$ and since $\mathcal{O}$ is maximal $\mathcal{O_{\mathfrak{q}}=\OO\otimes}_{\OO_{k}}R_{\qQ}$
is isomorphic to $M_{2}(R_{\mathfrak{q}})$ where $R_{\mathfrak{q}}$
is the valuation ring in $k_{\mathfrak{q}}$. Choosing the appropriate
isomorphism $A_{\mathfrak{q}}\cong M_{2}(k_{\mathfrak{q}})$ we can
assume that $\mathcal{O_{\mathfrak{q}}}=M_{2}(R_{\mathfrak{q}})$.
Consider the order $\mathcal{\mathcal{E_{\qQ}}=}M_{2}(R_{\mathfrak{\mathfrak{q}}})\cap PM_{2}(R_{\mathfrak{\mathfrak{q}}})P^{-1}$
with $P=diag(1,\varpi)$, where $\varpi$ is a generator of the valuation
ideal $\mathfrak{q}R_{\mathfrak{q}}$. This is the localization of
the global Eichler order $\mathcal{E}$ corresponding to a group $\Gamma_{\OO}^{B}(\qQ)$.
The involution $\tau$ which leaves $\OO$ invariant extends to an
involution $\hat{\tau}$ on $\OO_{\qQ}=\OO\otimes_{\OO_{k}}R_{\qQ}$
in an obvious way by defining $\hat{\tau}(x\otimes r)=\tau(x)\otimes\bar{r}$
where $r\mapsto\overline{r}$ is the generator of $Gal(k_{\mathfrak{q}}/\ell_{\mathcal{Q}})$
and $\mathcal{Q=\qQ\cap\ell}$ is the prime of $\ell$ lying under
$\qQ$ and $\ell_{\mathcal{Q}}$ its localization. By this the involution
$\hat{\tau}$ maps the matrix $P$ to $\pm P$ depending on whether
$k_{\mathfrak{q}}/\ell_{\mathcal{Q}}$ is unramified or ramified but
in any case $\hat{\tau}$ preserves $\mathcal{E_{\qQ}}.$ From the
construction of $\hat{\tau}$ we see that $\hat{\tau}$ also preserves
$\OO\cap\mathcal{E_{\qQ}=\mathcal{E}}$ and particularly the norm-1
group $\mathcal{E}^{1}$ whose quotient by the center is $\Gamma_{\OO}^{B}(\qQ)$.
The group $\Gamma_{\OO}^{U}(\qQ)$ consists of those elements in $\Gamma_{\OO}^{B}(\qQ)$
which reduce modulo $\qQ$ to upper triangular matrices with only
1 on the diagonal. The preimage of such matrices in $\mathcal{E_{\qQ}}$
is preserved by $\hat{\tau}$ and hence $\tau$ preserves the preimage
of these matrices in $\OO\cap\mathcal{E_{\qQ}}$. This implies that
with $\Gamma_{\OO}^{B}(\text{\ensuremath{\qQ}})$ also $\Gamma_{\OO}^{U}(\qQ)$
is preserved. 
\end{proof}

\subsection{Construction with the Borel subgroup}

Let $A(k,\pP_{1},\overline{\pP}_{1},\ldots,\pP_{m},\overline{\pP}_{m})$
be as before. Let $\qQ$ be a prime ideal of $k$ such that $\qQ\neq\pP_{i},\overline{\pP}_{i}$
for $i=1,\ldots,m$ and consider the group $\Gamma_{\OO}^{B}(\qQ)$,
the inverse image $\pi^{-1}(B)$ of a Borel subgroup $B\subset\Gamma_{\OO}(1)/\Gamma_{\OO}(\qQ)\cong PSL_{2}(\OO_{k}/\qQ)$.
The group $\Gamma_{\OO}^{B}(\qQ)$ is a subgroup of index $N_{k/\QQ}(\qQ)+1$
in $\Gamma_{\OO}(1)$. In order to discuss the torsions in $\Gamma_{\OO}^{B}(\qQ)$
it will again be useful to interpret $\Gamma_{\OO}^{B}(\qQ)$ as the
norm-1 group of an Eichler order as explained in previous section.
Let us give conditions under which $\Gamma_{\OO}^{B}(\qQ)$ is torsion-free. 
\begin{lem}
\label{eichlercrit} Let $k$, $A$, $\OO$ and $\qQ$ be as above.
Then, $\Gamma_{\OO}^{B}(\qQ)$ contains a torsion if and only if a
primitive $n$-th root of unity $\xi$ can be embedded in $\mathcal{E}(\qQ)$.
This happens if and only if every prime $\pP\in Ram(A)$ is either
ramified or inert in $k(\xi)$ and $\qQ$ is split in $k(\xi)$. \end{lem}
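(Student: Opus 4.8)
The plan is to reduce the statement about torsion in $\Gamma_{\OO}^{B}(\qQ)$ to the two separate questions of (i) when a root of unity can be embedded in the Eichler order $\mathcal{E}(\qQ)$, and (ii) what the local splitting behaviour at $\qQ$ forces. First I would recall from the previous section that $\Gamma_{\OO}^{B}(\qQ)$ is exactly the image (modulo center) of the norm-1 group $\mathcal{E}^{1}$ of the Eichler order $\mathcal{E}$ of level $\qQ$. Hence a torsion element of order $n$ in $\Gamma_{\OO}^{B}(\qQ)$ corresponds to an embedding of $\OO_{k}[\xi_{n}]$ into $\mathcal{E}$, where $\xi_{n}$ is a primitive $n$-th root of unity, and this gives the first ``if and only if''. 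The content of the second equivalence is to translate the condition ``$\xi$ embeds in $\mathcal{E}(\qQ)$'' into local embedding conditions place-by-place, via the local-global principle for embeddings of a quadratic order into an Eichler order (the Eichler/Hasse embedding theorem).

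The key step is the local analysis. At every finite place $v \neq \qQ$ the local order $\mathcal{E}_v$ equals $\OO_v$, so the local embeddability condition at $v$ is precisely the one appearing in Lemma \ref{torsions} for the maximal order: at a ramified prime $\pP \in Ram(A)$ one needs $\pP$ to be non-split in $k(\xi)$, i.e.\ ramified or inert. At the distinguished prime $\qQ$, however, $\mathcal{E}_{\qQ}$ is a genuine Eichler order of level $\qQ$ inside $M_2(R_{\qQ})$ (the intersection $M_2(R_{\qQ}) \cap P M_2(R_{\qQ}) P^{-1}$ with $P = \mathrm{diag}(1,\varpi)$ as set up above), and here the embedding behaviour differs from the maximal-order case: the quadratic order $\OO_k[\xi]$ embeds locally into the Eichler order of level $\qQ$ if and only if $\qQ$ is \emph{split} in $k(\xi)$. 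This is the crucial point where the Borel/Eichler structure enters, and I expect it to be the main obstacle to write cleanly: one must compute, using the optimal-embedding count (Eichler's formula, or a direct matrix computation), that an embedding of a maximal quadratic order into the level-$\qQ$ Eichler order survives precisely in the split case, whereas in the inert or ramified case no such optimal embedding exists. The infinite places impose no obstruction since $A$ is unramified at the two distinguished real places and ramified at the rest, and $k(\xi)$ is a CM or totally real extension compatible with embedding into $\mathbf{H}$ at the ramified infinite places.

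Assembling these local conditions by the local-global principle yields the stated criterion: $\Gamma_{\OO}^{B}(\qQ)$ contains a torsion element of order $n$ exactly when $\xi=\xi_n$ satisfies $\xi+\xi^{-1}\in k$ (so that $k(\xi)$ is a quadratic extension embeddable in $A$), every $\pP\in Ram(A)$ is ramified or inert in $k(\xi)$, and $\qQ$ is split in $k(\xi)$. The first two conditions are the maximal-order conditions of Lemma \ref{torsions} transplanted to all places away from $\qQ$, and the third is the new Eichler condition at $\qQ$. I would also remark that the bound $\varphi(n)\leq 2[k:\QQ]$ derived earlier limits $n$ to finitely many cases, so in practice the criterion is checkable by testing the splitting of the finitely many relevant ramified primes and of $\qQ$ in the corresponding fields $k(\xi)$, using Shavel's splitting criterion exactly as in the proof of Theorem \ref{list_real_quadratic_candidates}.
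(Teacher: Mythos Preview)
Your approach is essentially the same as the paper's: both reduce the first equivalence to the identification of $\Gamma_{\OO}^{B}(\qQ)$ with the norm-1 group of the Eichler order $\mathcal{E}(\qQ)$, and both obtain the second equivalence from Eichler's embedding theorem. The paper is considerably more terse---it simply cites Eichler's result (Satz~6 in Eichler's paper) as a black box for the splitting criterion---whereas you unpack the local-global principle place by place, separating the maximal-order conditions at $v\neq\qQ$ (recovering Lemma~\ref{torsions}) from the genuine Eichler condition at $\qQ$. This extra detail is correct in spirit and arguably more informative, though one small caution: the standard local embedding count at a level prime $\qQ$ gives a factor $1+\bigl(\frac{k(\xi)}{\qQ}\bigr)$, which vanishes only in the inert case, so your claim that ``in the inert or ramified case no such optimal embedding exists'' is slightly too strong as stated; the paper sidesteps this by citing Eichler directly rather than spelling out the local computation.
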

\begin{proof}
Let $\gamma$ be a torsion in $\Gamma_{\OO}^{B}(\qQ)$. Then there
is a minimal $N$ such that $\gamma^{N}=\pm1$, which implies that
$\gamma$ is an $N$-th (or $2N$-nth) root of unity contained in
$\mathcal{E}(\qQ)$. Conversely, let $\xi$ be a root of unity, let
$L=k(\xi)$ and assume that there exists an embedding $\sigma:L\hookrightarrow A$
such that $\sigma(L)\cap\mathcal{E}(\qQ)=\mathcal{O}_{k}(\xi)$ (that
is, $\mathcal{O}_{k}(\xi)$ is embedded in $\mathcal{E}(\qQ)$). Then
$\sigma(\xi)$ is a torsion in $\Gamma_{\OO}^{B}(\qQ)$. By a theorem
of Eichler (see \cite[Satz 6]{eichler}) such an embedding is possible
if and only if the splitting condition mentioned in the statement
of Lemma is satisfied. 
\end{proof}

\subsection{A Shimura surface with an involution of second kind and $p_{g}=5$.\label{sub:pg5}}

Let $k=\QQ(\sqrt{33})$ and $A=A(k,\pP_{2}\overline{\pP}_{2})$ the
indefinite quaternion algebra ramified exactly at the two places over
$2$ (note that $2$ is split in $k$, since $33\equiv1\bmod8$).
By Theorem \ref{existence_type_2}, $A$ admits a $k/\QQ$-involution
and since $k/\QQ$ is not totally ramified, Theorem \ref{invariantthm}
ensures the existence of an involution invariant order $\OO$. Let
$\qQ=\qQ_{11}$ be the prime over $11$. Since $11$ is ramified in
$k$, we have $N_{k/\QQ}(\qQ)=11$ and $\Gamma_{\OO}^{B}(\qQ_{11})$
is of index $12$ in $\Gamma_{\OO}$. By the volume formula from Theorem
\ref{shimizu}, we have $e(\Gamma_{\OO}(1))=2$, hence $e(\Gamma_{\OO}^{B}(\qQ))=24$.
Let us show that $\Gamma_{\OO}^{B}(\qQ)$ is torsion-free. For this
we need to exclude the existence of elements of order $2$ and $3$
only, since these are the only primes for which an embedding of $\xi_{p}$
in $A$ is possible. Elements of order $2$ come from embeddings of
$k(\xi_{4})=k(\sqrt{-1})$ in $A$ and those of order $3$ from embeddings
of $k(\xi_{6})=k(\sqrt{-3})$. We use Lemma \ref{eichlercrit}: $k(\xi_{4})\cong\QQ[x]/\langle x^{4}-64x^{2}+1156\rangle$
and we find that $11\OO_{k(\zeta_{4})}=\mathfrak{Q}^{2}$ with $(\OO_{k(\xi_{4})}/\mathfrak{Q}:\FF_{11})=2$.
It follows that $\qQ_{11}$ is inert in $k(\xi_{4})$ and by Lemma
\ref{eichlercrit}, $\Gamma_{\OO}^{B}(\qQ)$ contains no elements
of order $2$. Similar argument excludes the existence of elements
of order $3$. Namely, $k(\xi_{6})\cong\QQ[x]/\langle x^{4}-60x^{2}+1296\rangle$
and in $k(\xi_{6})$ we again have $11\OO_{k(\zeta_{6})}=\mathfrak{Q}^{2}$
with a prime ideal $\mathfrak{Q}$ in $\OO_{k(\zeta_{6})}$ whose
inertia degree is $(\OO_{k(\xi_{6})}/\mathfrak{Q}:\FF_{11})=2$. Again
this implies that $\qQ_{11}$ is inert in $\OO_{k(\xi_{6})}$ and
by Lemma \ref{eichlercrit}, there are no elements of order 3 in $\Gamma_{\mm}^{B}(\qQ_{11})$.
Finally by Corollary \ref{invarianceunderinvolution}, $\Gamma_{\OO}^{B}(\qQ)$
is invariant under the involution on $\OO$ and we get: 
\begin{thm}
The group $\Gamma_{\OO}^{B}(\qQ_{11})$ is admissible of type $24$. \end{thm}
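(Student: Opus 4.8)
The plan is to verify directly the three defining conditions of an admissible group of type $24$ for $\Gamma=\Gamma_{\OO}^{B}(\qQ_{11})$: that it is torsion-free, that its orbifold Euler number equals $24$, and that it is preserved by an involution of second kind on $A$. The last two are quick consequences of the structural results recalled above, so the substance of the argument lies in the torsion computation.

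First I would settle the invariance. Since $33=3\cdot 11\equiv 1\bmod 4$ the discriminant is $d_{k}=33$, so $11$ ramifies in $k=\QQ(\sqrt{33})$ and $11\OO_{k}=\qQ_{11}^{2}$; in particular $\qQ_{11}$ is the unique prime over $11$, hence fixed by the non-trivial automorphism $x\mapsto\overline{x}$, i.e.\ $\overline{\qQ_{11}}=\qQ_{11}$. Because $A$ ramifies only at the two primes over $2$, the prime $\qQ_{11}$ is unramified in $A$. Proposition \ref{existence_type_2} furnishes a $k/\QQ$-involution $\tau$ on $A$ (its hypotheses hold since $r=2$ is even and $2$ splits, so $\pP_{2}\neq\overline{\pP}_{2}$), and since $k/\QQ$ is ramified the exceptional case of Theorem \ref{invariantthm} is excluded, giving a $\tau$-invariant maximal order $\OO$. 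Corollary \ref{invarianceunderinvolution}, applied with $\overline{\qQ_{11}}=\qQ_{11}$ and $\qQ_{11}$ unramified in $A$, then produces an involution of second kind leaving $\Gamma_{\OO}^{B}(\qQ_{11})$ invariant.

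Next the Euler number, via multiplicativity of the orbifold Euler number under finite index, $e(X_{\Gamma})=[\Gamma_{\OO}(1):\Gamma]\cdot e(X_{\Gamma_{\OO}(1)})$. The real-quadratic formula following Proposition \ref{shimizu}, with $B_{2,k}=24$ for $d_{k}=33$ (read off from the table in the proof of Theorem \ref{list_real_quadratic_candidates}) and the single split ramified prime $2$ contributing the factor $(2-1)^{2}=1$, gives $e(X_{\Gamma_{\OO}(1)})=24/12=2$. Since $\qQ_{11}$ is ramified one has $N_{k/\QQ}(\qQ_{11})=11$, so the Borel-type group has index $N_{k/\QQ}(\qQ_{11})+1=12$, whence $e(X_{\Gamma})=12\cdot 2=24$.

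The main obstacle is torsion-freeness. It suffices to exclude prime-order torsion; any such element of order $p$ generates a quadratic subfield $k(\xi_{p})\subset A$, forcing $\varphi(p)=p-1\leq 2[k:\QQ]=4$ together with $\xi_{p}+\xi_{p}^{-1}\in k$, which leaves only $p=2,3$ (the case $p=5$ is ruled out as $\QQ(\sqrt 5)\not\subset k$). Here I would invoke the Eichler embedding criterion Lemma \ref{eichlercrit}: a torsion of order $2$ (resp.\ $3$) exists in $\Gamma_{\OO}^{B}(\qQ_{11})$ iff $\xi_{4}$ (resp.\ $\xi_{6}$) embeds in the Eichler order $\mathcal{E}(\qQ_{11})$, which in particular requires $\qQ_{11}$ to split in $k(\xi_{4})=k(\sqrt{-1})$ (resp.\ $k(\xi_{6})=k(\sqrt{-3})$). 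The decisive input is the explicit factorization of $11$ in these quartic fields: computing $11\OO_{k(\xi_{4})}=\mathfrak{Q}^{2}$ with residue degree $2$ shows $\qQ_{11}$ is \emph{inert} in $k(\xi_{4})/k$, and the analogous $11\OO_{k(\xi_{6})}=\mathfrak{Q}^{2}$ with residue degree $2$ shows $\qQ_{11}$ is inert in $k(\xi_{6})/k$. In both cases the splitting hypothesis of Lemma \ref{eichlercrit} fails, so no torsion of order $2$ or $3$ occurs and $\Gamma_{\OO}^{B}(\qQ_{11})$ is torsion-free. Assembling the three verified conditions yields that $\Gamma_{\OO}^{B}(\qQ_{11})$ is admissible of type $24$.
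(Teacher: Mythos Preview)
Your proof is correct and follows essentially the same approach as the paper: you verify invariance via Proposition~\ref{existence_type_2}, Theorem~\ref{invariantthm}, and Corollary~\ref{invarianceunderinvolution}, compute the Euler number from $B_{2,k}=24$ and the Borel index $12$, and exclude torsion of orders $2$ and $3$ using Lemma~\ref{eichlercrit} by checking that $\qQ_{11}$ is inert in $k(\xi_4)$ and $k(\xi_6)$. The only cosmetic difference is that you add the explicit remark ruling out $p=5$, which the paper leaves implicit.
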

\begin{rem}
\label{remark_deg_2}Unfortunately, one promising candidate for a
admissible group of type $12$ ($p_{g}=2$) fails to be torsion-free.
Namely, let $A=A(\QQ(\sqrt{17}),\pP_{2}\overline{\pP}_{2})$ and take
$\qQ=\qQ_{17}$, the ideal over $17$. Then the index of $\Gamma_{\OO}^{B}(\pP_{17})$
in $\Gamma_{\OO}(1)$ is $18$ and as $e(\Gamma_{\OO}(1))=2/3$, we
get $e(\Gamma_{\OO}^{B}(\pP_{17}))=12$. The invariance under the
involution of second kind is guaranteed by the condition $\qQ_{17}=\overline{\qQ}_{17}$.
But in $L=k(\xi_{4})\cong\QQ[x]/\langle x^{4}-32x^{2}+324\rangle$,
both primes $\pP_{2}$ and $\overline{\pP}_{2}$ are non-split and
$\pP_{17}$ is split. This implies that there are 2-torsions in $\Gamma_{\OO}^{B}(\qQ_{17})$.
There are more examples of non-smooth Shimura surfaces with ``good''
invariants. Consider for instance $k=\QQ(\sqrt{28})$ and $A=A(k,\pP_{3}\overline{\pP}_{3})$
the indefinite quaternion algebra ramified at the two places over
$3$. Theorems \ref{existence_type_2} and \ref{invarianceunderinvolution}
ensure that $A$ has an involution of second kind and that there is
an order $\OO$ invariant under the involution. From Theorem \ref{shimizu}
we know that $e(\Gamma_{\OO}(1))=16/3$. The rational prime $2$ is
ramified in $k$. Let $\qQ=\qQ_{2}$ be the prime ideal of $k$ with
$\qQ_{2}^{2}=2\OO_{k}$ and consider the principal congruence subgroup
$\Gamma_{\OO}(\qQ)$. It is a subgroup in $\Gamma_{\text{\ensuremath{\OO}}}(1)$
of index $6$. There are no elements of order $3$ in $\Gamma_{\OO}(\qQ)$
by Lemma \ref{congruencetorsion} but there are elements of order
2 coming from embeddings of $\sqrt{-1}$ in $\OO$. \\

Looking at the list in Theorem \ref{list_real_quadratic_candidates},
we can also prove that no other admissible groups of type $e=12,\ldots,36$
can be obtained from $k=\QQ(\sqrt{d})$ and $\Gamma=\Gamma_{\OO}^{B}(\qQ)$
or $\Gamma_{\OO}^{U}(\qQ)$. 
\end{rem}
Instead we can consider totally real fields of higher degree:

\subsection{A Shimura surface with an involution of second kind and $p_{g}=6$.\label{sub:pg6}}

In this example we consider the unique totally real number field $k$
of degree $4$ and discriminant $d_{k}=725$. Its defining polynomial
is $x^{4}-x^{3}-3x^{2}+x+1$ and $k$ contains $\ell=\QQ(\sqrt{5})$
as a subfield of degree $2$. Let us consider the $k$-central quaternion
algebra $A(k,\emptyset)$ ramified exactly at two infinite places
$v_{1}$ and $v_{2}$ of $k$ such that $v_{2}=v_{1}\circ\sigma$,
where $\langle\sigma\rangle=Gal(k/\ell)$. We remark that $k$ is
not a Galois extension of $\QQ$. The algebra $A$ admits an involution
of second kind $\tau$ and by Theorem \ref{invariantthm}, there is
a maximal order $\OO$ invariant under $\tau$. Consider now the prime
$q=29$. In $\ell=\QQ(\sqrt{5})$, $29\OO_{\ell}=\mathcal{Q}_{29}\mathcal{Q}_{29}'$
is a product of two primes. On the other hand, a computation with
PARI/GP shows that the ideal $29\OO_{k}=\qQ_{29}^{2}\qQ_{29}'$ is
also a product of two prime ideals $\qQ_{29}$ (with multiplicity
2) and $\qQ_{29}'$, hence neither $\mathcal{Q}_{29}$ nor $\mathcal{Q}_{29}'$
is split in $k$. Moreover we deduce that $\qQ_{29}^{2}=\mathcal{Q}_{29}\OO_{k}$
and $\qQ_{29}'=\mathcal{Q}'_{29}\OO_{k}$ as well as $\OO_{k}/\qQ_{29}\cong\mathbb{F}_{29}$
and $\OO_{k}/\qQ'_{29}\cong\mathbb{F}_{29^{2}}$. By Theorem \ref{shimizu}
we have $e(X_{\Gamma_{\OO}(1)})=1/15$ (we compute $\zeta_{k}(2)$
with PARI/GP command \verb  zetak). Consider the congruence subgroup
$\Gamma_{\OO}^{U}(\qQ_{29})$. We obtain $[\Gamma_{\OO}(1):\Gamma_{\OO}^{U}(\qQ_{29})]=420$
and $c_{2}(X_{\Gamma_{\OO}^{U}(\qQ_{29})})=28$. By corollary \ref{invarianceunderinvolution},
$\Gamma_{\OO}^{U}(\qQ_{29})$ is stable under $\tau$. Also $\Gamma_{\OO}^{U}(\qQ_{29})$
is torsion-free. Namely, as the order of $U$ is $s$, any non-trivial
torsion element in $\Gamma_{\OO}^{U}(\qQ_{29})$ has order $29$ (which
is impossible by lemma \ref{torsions}) or lies already in $\Gamma_{\OO}(\qQ_{29})$.
But this latter group is torsion-free by Lemma \ref{congruencetorsion}.
\begin{rem}
\label{remark_in_degree_4} Of course, the strategy of Section \ref{sub:Smoothness-and-the-Euler}
leading to Theorem \ref{list_real_quadratic_candidates} could be
applied also in the case of quaternion algebras over totally real
fields $k$ of degree $>2$ but becomes very soon computationally
involved. Restricting ourselves to totally real quartic fields of
discriminant $\leq10^{4}$ and groups of type $\Gamma_{\OO}^{B}(\qQ)$
or $\Gamma_{\OO}^{U}(\qQ)$ we find that example \ref{sub:pg6} is
the only example of an admissible group (of any type $e=12+4k$, $0\leq k\leq6$).
But similarly to Remark \ref{remark_deg_2} we find some interesting
non-smooth examples: Let $k_{4,D}$ denote a totally real field of
degree $4$ and discriminant $D$ (in the examples below, there will
be only one such field up to isomorphism) containing a real quadratic
field $\ell$. Let $A(k_{4,D},\emptyset)$ denote the quaternion algebra
over $k_{4,D}$ ramified exactly at the two infinite places of $k_{4,D}$
which are conjugate under the non-trivial $\ell$-automorphism of
$k_{4,D}$. Such quaternion algebra admits a $k_{4,D}/\ell$ -involution.
Let $\OO$ be a maximal order in $A(k_{4,D},\emptyset)$ and $\Gamma_{\OO}(1)$
the corresponding projectivized modular group. We get several \emph{singular}
Shimura surfaces $X_{\Gamma}$ admitting an involution of second kind
with $\Gamma\subset\Gamma_{\OO}(1)$ given in the table below:
\end{rem}
\begin{tabular}{|c|c|c|c|c|}
\hline 
$k_{n,D}$ & $\ell$ & $e(\Gamma(1))$ & $\Gamma$ & $e(\Gamma)$\tabularnewline
\hline 
\hline 
$k_{4,2624}$ & $\mathbb{Q}(\sqrt{5})$ & $1/2$ & $\Gamma^{B}(\qQ_{7,1})\cap\Gamma^{B}(\text{\ensuremath{\qQ}}_{7,2})$ & 32\tabularnewline
\hline 
$k_{4,2000}$ & $\mathbb{Q}(\sqrt{5})$ & $1/3$ & $\Gamma(\qQ_{5})$ & 20\tabularnewline
\hline 
$k_{4,2000}$ & $\mathbb{Q}(\sqrt{5})$ & $1/3$ & $\Gamma(\qQ_{2})$ & 20\tabularnewline
\hline 
$k_{4,2525}$ & $\mathbb{Q}(\sqrt{5})$ & $2/3$ & $\Gamma^{B}(\qQ_{5})\cap\Gamma^{B}(\overline{\qQ}_{5})$ & 24\tabularnewline
\hline 
$k_{4,3600}$ & $\mathbb{Q}(\sqrt{3})$,  & $4/5$ & $\Gamma^{U}(\qQ_{2})$ & 12\tabularnewline
\hline 
\end{tabular}

\section{determination of the fixed curve.\label{sec:determination-of-the-curve}}

Let $X_{\Gamma}=\HH_{\CC}^{2}/\Gamma$ be a smooth Shimura surface
such that the involution $\mu$ on $\HH_{\CC}^{2}$ exchanging the
factors descends to an involution $\sigma$ on the quotient $X_{\Gamma}$.
The image of the diagonal $\Delta\subset\HH\times\HH$ is a smooth
Shimura curve $C_{\Gamma}$ fixed by $\sigma$. The aim of this section
is to determine that curve in examples we investigated in Sections
\ref{sub:pg5} and \ref{sub:pg6}. 

The analogous problem for Hilbert modular surfaces is well-known,
see for instance \cite{HirzebruchModular} or \cite{Hausmann}. The
quaternion algebra $A$ over $k$ defining the group $\Gamma$ has
a non trivial involution $\tau$ of second kind. That involution leaves
invariant a subfield $\ell$. Recall that $A=A(k\mathfrak{,p}_{1},\dots,\mathfrak{p}_{2r})$
with $\mathfrak{p}_{2i-1}=\mathfrak{p}_{2i}^{\alpha}$ as in Prop.
\ref{existence_type_2} and $\alpha:k\to k$ the involution of the
extension $k/\ell$ with $\sigma_{1}=\sigma_{2}\circ\alpha$ for $\sigma_{i}:k\to\RR$
the unramified infinite places in $A$. Also note that the involution
$\sigma:X_{\Gamma}\longrightarrow X_{\Gamma}$ is determined by the
involution of second kind $\tau$ on $A$. The fixed point set of
$\sigma$ is associated with the invariant $\ell$-subalgebra $A'=\left\{ a\in A\ \mid\ \tau(a)=a\right\} $
of $A$. From the proof of Proposition \ref{existence_type_2} we
know that $A'$ is a quaternion algebra over $\ell$ with the property
$A'\otimes k\cong A$. Moreover, $A'$ is ramified at every prime
$\mathcal{P}_{i}$ of $\ell$ such that $\mathcal{P}_{i}\OO_{k}=\pP_{2i-1}\pP_{2i}$
for $i=1,\ldots,r$.
\begin{lem}
Let $A=A(k,\pP_{1},\ldots,\pP_{2r})$ be a quaternion algebra admitting
an involution of second kind $\tau$ and $A'$ the elementwise $\tau$-invariant
subalgebra. Let $\OO$ be a order in $A$, then $\OO'=\OO\cap A'$
is an order in $A'$. Conversely, assume that $A'=A'(\ell,\mathcal{P}_{1},\ldots,\mathcal{P}_{s})$
is a quaternion algebra over $\ell$ and $\OO'$ an order in $A'$
then $\OO=\OO'\otimes_{\OO_{\ell}}\OO_{k}$ is an order in $A.$ Assume
that $\OO'$ is maximal and each $\mathcal{P}_{i}$ is split in $k$
then $\OO'\otimes_{\OO_{\ell}}\OO_{k}$ is a maximal order in $A=A'\otimes_{\ell}k$.
$ $\end{lem}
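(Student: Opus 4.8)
The plan is to verify the three assertions in turn, using throughout that an order is a subring which is simultaneously a full lattice over the relevant ring of integers (finite generation as a module plus spanning the algebra over the fraction field), and that such a subring automatically consists of integral elements.

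For the first assertion, I would note that $\OO'=\OO\cap A'$ is a subring containing $1$, and that it contains $\OO_\ell$ (since $\OO_\ell\subseteq\OO_k\subseteq\OO$ and $\OO_\ell\subseteq\ell\subseteq A'$), so it is an $\OO_\ell$-algebra. It remains to see that it is a full $\OO_\ell$-lattice in $A'$. Finite generation is immediate: $\OO$ is a finitely generated $\OO_k$-module, hence a finitely generated $\OO_\ell$-module, so its $\OO_\ell$-submodule $\OO'$ is finitely generated because $\OO_\ell$ is Noetherian. To see that $\OO'$ spans $A'$ over $\ell$, take $a\in A'$; since $\OO$ is a finitely generated $\OO_k$-module spanning $A$ over $k$, it is in particular a full $\ZZ$-lattice in $A$, so there is a nonzero $n\in\ZZ$ with $na\in\OO$, and as $n\in\ell$ we have $na\in A'$, whence $na\in\OO\cap A'=\OO'$. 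Thus $\ell\cdot\OO'=A'$ and $\OO'$ is an order.

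For the second assertion I would first recall from the proof of Proposition \ref{existence_type_2} that $A=A'\otimes_\ell k$, and then identify $\OO=\OO'\otimes_{\OO_\ell}\OO_k$ with the $\OO_k$-submodule $\OO'\OO_k$ of $A$ generated by $\OO'$; the natural map $\OO'\otimes_{\OO_\ell}\OO_k\to A$ is injective because $\OO_k$ is a finitely generated torsion-free, hence projective and flat, $\OO_\ell$-module, and $A'\otimes_{\OO_\ell}\OO_k=A'\otimes_\ell k=A$. Since $\OO_k\subseteq k$ is central in $A$, this submodule is a subring; it is finitely generated over $\OO_k$ (by generators of $\OO'$) and spans $A=A'\otimes_\ell k$ over $k$ because $\OO'$ spans $A'$ over $\ell$. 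A subring that is a finitely generated $\OO_k$-module consists of integral elements, so $\OO$ is an order in $A$.

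The heart of the lemma is the maximality claim, and here I would argue locally, using that an order is maximal if and only if its completion at every finite place is maximal. Fix a finite place $w$ of $k$ lying over the place $v$ of $\ell$; base change along $\OO_k\to\OO_{k_w}$ gives that the completion of $\OO$ at $w$ is $\OO'_v\otimes_{\OO_{\ell_v}}\OO_{k_w}$, where $\OO'_v$ denotes the completion of $\OO'$ at $v$, which is maximal in $A'_v=A'\otimes_\ell\ell_v$ since $\OO'$ is maximal. I would then split into two cases. If $A'_v\cong M_2(\ell_v)$ is split, then $\OO'_v\cong M_2(\OO_{\ell_v})$, so the completion of $\OO$ at $w$ is $M_2(\OO_{k_w})$, a maximal order in $M_2(k_w)=A_w$, regardless of how $v$ behaves in $k$. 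If $A'_v$ is a division algebra, then $v$ is one of the ramified primes $\mathcal{P}_i$, and this is exactly where the hypothesis enters: since $\mathcal{P}_i$ splits in $k$ we have $k_w=\ell_v$, hence $A_w=A'_v\otimes_{\ell_v}k_w=A'_v$ and the completion of $\OO$ at $w$ equals $\OO'_v$, the unique maximal order of the local division algebra $A'_v=A_w$. As every finite place falls into one of these two cases, $\OO$ is maximal. The main obstacle is precisely this division case: without the splitting hypothesis one could have $v$ inert in $k$, so that $A'_v$ becomes split after the unramified base change $k_w/\ell_v$ while $\OO'_v\otimes\OO_{k_w}$ is no longer maximal in $M_2(k_w)$. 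Equivalently, in discriminant terms, the reduced discriminant of $\OO'\otimes_{\OO_\ell}\OO_k$ is $\mathfrak{d}(\OO')\OO_k=\prod_i\mathcal{P}_i\OO_k$ (the reduced-trace matrix being unchanged under the central base change), and this equals the discriminant $\prod_j\pP_j$ of $A$ precisely when each $\mathcal{P}_i$ splits as $\pP_{2i-1}\pP_{2i}$; matching discriminants gives an alternative proof of maximality.
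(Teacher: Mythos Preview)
Your proof is correct and follows essentially the same strategy as the paper: the paper declares the first two assertions obvious and, for the maximality claim, argues locally, splitting into the unramified case (where the completion is $M_2(\OO_{k_\pP})$) and the ramified case (where the splitting hypothesis gives $k_\pP=\ell_{\mathcal{P}_i}$). You supply more detail for the first two parts and add a neat alternative via reduced discriminants, but the core argument is identical.
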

\begin{proof}
The first part of the Lemma concerning the correspondence between
orders $\OO'$ in $A'$ and orders in $A$ is obvious and we shall
therefore prove only the second part. Assume that $\OO'\subset A'$
is a maximal order and let $\OO=\OO'\otimes_{\OO_{\ell}}\OO_{k}$.
The order $\OO$ is maximal if and only if each of its localizations
$\OO_{\pP}$ is maximal in the local algebra $A_{\pP}$. Here, $\OO_{\pP}=\OO'_{\mathcal{P}}\otimes\OO_{k_{\pP}}$
arises from the local maximal order $\OO'_{\mathcal{P}}$ corresponding
to a finite place $\mathcal{P}$ of $\ell$ lying under $\pP$. Assume
now that $\mathcal{P\neq\mathcal{P}}_{i}$ is a finite place at which
$B$ is unramified. Then $\OO'_{\mathcal{P}}\cong M_{2}(\OO_{\ell_{\mathcal{P}}})$
and clearly $\OO_{\pP}\cong M_{2}(\OO_{k_{\pP}})$ is also maximal.
If $\mathcal{P=\mathcal{P}}_{i}$ is a place such that $A'_{\mathcal{P}_{i}}$
is a division algebra, as by assumption $k_{\pP_{i}}=k_{\bar{\pP_{i}}}=\ell_{\mathcal{P}_{i}}$,
$\OO_{\pP_{i}}$ and $\OO_{\overline{\pP}_{i}}$ are maximal.\end{proof}
\begin{rem}
We shall note that in the case where $A'=A'(\ell,\mathcal{P}_{1},\ldots,\mathcal{P}_{1},\mathcal{Q})$
ramifies also at some prime $\mathcal{Q}$ that is non-split in $k$
the order $\OO=\OO'\otimes\OO_{k}$ is not maximal even if $\OO'$
is maximal. \end{rem}
\begin{example}
Let $A'=\left(\frac{2,5}{\QQ}\right)$ be the quaternion algebra over
$\QQ$ generated by elements $1,i,j,ij$ such that $i^{2}=2$, $j^{2}=5$
(and $ij=-ji$). The algebra $A'$ is ramified exactly at the primes
$2$ and $5$. Let $\OO'$ be a maximal order in $A'$. The group
$\Gamma_{\OO'}(1)$ is a Fuchsian group. Let $\Gamma_{\OO'}^{B}(11)$
be the subgroup corresponding to the Borel subgroup of $PSL_{2}(\FF_{11})$.
This subgroup can be interpreted as the group of elements of reduced
norm 1 of an Eichler order $\mathcal{E}(11)$ of level $11$. The
group $\Gamma_{\OO'}(1)$ is of index $12$ in $\Gamma_{\OO'}(1)$
and is torsion-free by Lemma \ref{eichlercrit}, as $5$ is split
in $\QQ(i)$ and $11$ is non-split in $\QQ(\sqrt{3})$. The genus
of the curve $C=\Gamma_{\OO'}(11)\backslash\mathbb{H}$ can be easily
computed with the already used general volume formula from \cite{shimizu}
(see also \cite[III, Prop. 2.10]{vign2}) by which $2-2g(C)=-8$.
Let $k=\QQ(\sqrt{33})$. Then $A=A'\otimes k=\left(\frac{2,5}{k}\right)$
is a quaternion algebra over $k$ and is ramified exactly at the two
primes lying over $2$. The Eichler order $\mathcal{E}(11)$ in $A'$
is naturally contained in the order $\mathcal{E}(11)\otimes_{\ZZ}\OO_{k}$
of $A$ and the latter one is contained in $\mathcal{E}(\qQ_{11})$
the Eichler order of $A$ corresponding to the prime $\qQ_{11}$ of
$k$ lying over $11$, since the elements $\mathcal{E}(11)\otimes_{\ZZ}\OO_{k}$
become upper triangular modulo $11$, hence modulo $\qQ_{11}$. This
gives an embedding of $\Gamma_{\OO'}^{B}(11)$ in $\Gamma_{\OO}^{B}(\qQ_{11})$
and hence an embedding of a Shimura curve of genus $5$ into the Shimura
surface $X=\Gamma_{\OO}^{B}(\qQ_{11})\backslash\HH\times\HH$ (see
Section \ref{sub:pg5}) which by construction must be fixed by the
involution of second kind on $X$. This gives a precise characterization
of the Shimura surface $Z=X/\sigma$, the quotient of $X$ by the
involution on $X$ induced by the involution of second kind:\end{example}
\begin{prop}
The surface $Z$ is a smooth surface of general type with $p_{g}=0$,
$K_{Z}^{2}=4$ and $e(Z)=8$.
\end{prop}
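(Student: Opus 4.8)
The plan is to derive the assertion directly from Proposition~\ref{invariants_of_quotient}, whose formulas express all invariants of $Z=X/\sigma$ in terms of the two quantities $e(X)$ and the arithmetic genus $g$ of the fixed curve. Thus the whole proof reduces to pinning down these two numbers for the surface $X=\Gamma_{\OO}^{B}(\qQ_{11})\backslash\HH\times\HH$ of Section~\ref{sub:pg5}.

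First I would recall from Section~\ref{sub:pg5} that $\Gamma_{\OO}^{B}(\qQ_{11})$ is torsion-free and admissible of type $24$, so that $X$ is a \emph{smooth} Shimura surface with $e(X)=24$ carrying the involution $\sigma$ of second kind induced by $\mu$. Next, the preceding Example identifies the fixed locus: the embedding $\Gamma_{\OO'}^{B}(11)\hookrightarrow\Gamma_{\OO}^{B}(\qQ_{11})$ realizes the Shimura curve $C=\Gamma_{\OO'}^{B}(11)\backslash\HH$, of genus $g(C)=5$ (from $2-2g(C)=-8$), as a curve on $X$ fixed by $\sigma$. Since by the proof of Proposition~\ref{invariants_of_quotient} the arithmetic genus of the full fixed locus satisfies $g\le (e(X)-4)/4=5=p_g(X)$, and one component already has genus $5$, the fixed curve is irreducible and $g=5$; this is the extremal case $g=p_g(X)$.

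With the inputs $e(X)=24$ and $g=5$ in hand I would substitute into Proposition~\ref{invariants_of_quotient}. The quotient $Z$ is smooth because the fixed locus $C$ of the involution is a smooth divisor, and the invariants are
\[
K_Z^{2}=e(X)+5(1-g)=24-20=4,\qquad e(Z)=c_2(Z)=\tfrac12 e(X)+1-g=12-4=8,
\]
\[
p_g(Z)=\frac{e(X)-4-4g}{8}=\frac{24-24}{8}=0,\qquad q(Z)=0.
\]
General type then follows from the same proposition, since the positivity condition $(K_X-C)^2>0$ holds whenever $e(X)\le 36$, and here $e(X)=24$.

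At the level of this Proposition there is essentially no remaining obstacle: the genuinely difficult steps --- exhibiting a torsion-free, $\tau$-stable congruence group of Euler number $24$, and above all identifying the fixed curve together with its genus --- were already carried out in Section~\ref{sub:pg5} and in the Example. The only subtlety worth a sentence is confirming that $C$ is the \emph{entire} fixed set rather than a single component, so that the value $g=5$ fed into the formulas is correct; this is guaranteed by the bound $g\le p_g(X)=5$, which forces irreducibility once a component of genus $5$ is found.
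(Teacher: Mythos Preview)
Your proposal is correct and follows the same route the paper implicitly takes: the Example supplies a genus-$5$ component of the fixed locus, the bound $g\le (e(X)-4)/4=5$ from Proposition~\ref{invariants_of_quotient} forces this to be the whole fixed curve, and then the invariants of $Z$ are read off from that same proposition (or equivalently from the $e=24$, $g=5$ line of the table). Your explicit justification of irreducibility via the inequality $g\le p_g(X)$ is a welcome clarification of a step the paper leaves tacit.
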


\bibliographystyle{amsalpha}
\bibliography{refs}

\vspace{0.2in}
 {\large{\setlength{\parindent}{0.2in} }}{\large \par}

{\large{Amir D\v{z}ambi\'{c},}}{\large \par}

{\large{Johann Wolfgang Goethe Universität, Institut für Mathematik,}}{\large \par}

{\large{Robert-Mayer-Str. 6-8, 60325 Frankfurt am Main,}}{\large \par}

{\large{Germany}}{\large \par}

\texttt{\large{dzambic@math.uni-frankfurt.de}}{\large \par}

{\large{\vspace{0.2in}
 \setlength{\parindent}{0.5in}}}{\large \par}

{\large{Xavier Roulleau,}}{\large \par}

{\large{Laboratoire de Mathématiques et Applications, Université de
Poitiers,}}{\large \par}

{\large{Téléport 2 - BP 30179 - 86962 Futuroscope Chasseneuil }}{\large \par}

{\large{France}}{\large \par}

\texttt{\large{roulleau@math.univ-poitiers.fr{} }}{\large{{} }} 
\end{document}